\newcommand{\Cinf}{C^\infty}
\newcommand{\diff}{\mathrm{d}}
\newcommand{\pr}{\mathrm{pr}}
\newcommand{\n}[1]{\left\Vert #1\right\Vert} 
\newcommand{\R}{\mathbb{R}}
\newcommand{\N}{\mathbb{N}}
\newcommand{\be}{\begin{equation}}
\newcommand{\ee}{\end{equation}}
\newcommand{\bea}{\begin{eqnarray}}
\newcommand{\eea}{\end{eqnarray}}
\newcommand{\ben}{\begin{displaymath}}
\newcommand{\een}{\end{displaymath}}
\newcommand{\bean}{\begin{eqnarray*}}
\newcommand{\eean}{\end{eqnarray*}}
\def\ol#1{\overline{#1}}
\newcommand{\mc}[1]{\mathcal{#1}}
\newcommand{\di}[2]{\frac{\diff #1}{\diff #2}}
\newcommand{\dif}[1]{\frac{\diff}{\diff #1}}
\newcommand{\mscr}[1]{\textrm{\fontencoding{OMS}\fontfamily{ztmcm}\selectfont#1}
}
\newcommand{\doo}{\partial}
\newcommand{\Rol}{\mathsf{Rol}}
\newcommand{\wRol}{\widetilde{\mathsf{Rol}}}
\newcommand{\ctr}{\rfloor}
\newcommand{\VF}{\mathrm{VF}}
\newcommand{\so}{\mathfrak{so}}
\renewcommand{\di}[2]{\frac{\mathrm{d}{#1}}{\mathrm{d}{#2}}}
\newcommand{\hM}{\hat{M}}
\newcommand{\hg}{\hat{g}}
\newcommand{\RDist}{\mc{D}_{\mathrm{R}}}
\newcommand{\LNSD}{\mscr{L}_{\mathrm{NS}}}
\newcommand{\LRD}{\mscr{L}_{\mathrm{R}}}
\newcommand{\Sym}{\mathrm{Sym}}
\newcommand{\ISym}{\mathrm{InnSym}}
\def\[#1\]{\begin{align*}#1\end{align*}}
\newtheoremstyle{theorem}{0.5cm}{\topsep}%
   {\sffamily}
   {}
   {\bfseries}
   {}
   {2ex}
   {\thmname{#1}\thmnumber{ #2}\thmnote{ #3}}
\theoremstyle{theorem}
\newtheorem{theorem}{Theorem}[section]
\newtheorem{proposition}[theorem]{Proposition}
\newtheorem{example}[theorem]{Example}
\newtheorem{corollary}[theorem]{Corollary}
\newtheorem{remark}[theorem]{Remark}
\newtheorem{definition}[theorem]{Definition}
\newtheorem{lemma}[theorem]{Lemma}
\begin{document}

\title{Symmetries of the Rolling Model
}

\author[Y. Chitour, M. Godoy M., P. Kokkonen]{Yacine Chitour\\Mauricio Godoy Molina\\Petri Kokkonen}

\thanks{The work of the first author is supported by the ANR project GCM, program ``Blanche'', (project number NT09$\_$504490) and the DIGITEO-R\'egion Ile-de-France project CONGEO. The work of the second author is partially supported by the ERC Starting Grant 2009 GeCoMethods. The work of the third author is supported by Finnish Academy of Science and Letters,
KAUTE Foundation and l'Institut fran\c{c}ais de Finlande.}

\subjclass[2000]{53C07, 53A45, 53A55, 53C17}

\keywords{rolling model, non-holonomic distributions, symmetries of distributions, nilpotent approximation}

\address{L2S, Universit\'e Paris-Sud XI, CNRS and Sup\'elec, Gif-sur-Yvette, 91192, France. }
\email{yacine.chitour@lss.supelec.fr}

\address{L2S, Universit\'e Paris-Sud XI, CNRS and Sup\'elec, Gif-sur-Yvette, 91192, France. }
\email{mauricio.godoy@gmail.com}

\address{L2S, Universit\'e Paris-Sud XI, CNRS and Sup\'elec, Gif-sur-Yvette, 91192, France and University of Eastern Finland, Department of Applied Physics, 70211, Kuopio, Finland.}
\email{petri.kokkonen@lss.supelec.fr}

\maketitle
 
\begin{abstract}
In the present paper, we study the infinitesimal symmetries of the model of two Riemannian manifolds $(M,g)$ and $(\hM,\hat g)$ rolling without twisting or slipping. We show that, under certain genericity hypotheses, the natural bundle projection from the state space $Q$ of the rolling model onto $M$ is a principal bundle if and only if $\hM$ has constant sectional curvature. Additionally, we prove that when $M$ and $\hM$ have different constant sectional curvatures and dimension $n\geq3$, the rolling distribution is never flat, contrary to the two dimensional situation of rolling two spheres of radii in the proportion $1\colon3$, which is a well-known system satisfying \'E. Cartan's flatness condition.
\end{abstract}
  
\tableofcontents


\section{Introduction}

A very old and difficult problem in differential geometry is the study of symmetries of distributions. A seminal contribution is the celebrated paper by \'E. Cartan~\cite{cartan10} in which, in modern terms, he studied distributions of rank two on a manifold of dimension five and, more precisely, the associated equivalence problem. Recall that a rank $l$ vector distribution $D$ on an $n$-dimensional manifold $M$ or $(l, n)$-distribution (where $l < n$) is, by definition, an $l$-dimensional subbundle of the tangent bundle $TM$, i.e., a smooth assignment $q\mapsto D|_q$ defined on $M$ where $D|_q$ is an $l$-dimensional subspace of the tangent space $T_qM$. Two vector distributions $D_1$ and $D_2$ are said to be equivalent, if there exists a diffeomorphism $F : M \rightarrow M$ such that $F_*D_1|_q = D_2|_{F(q)}$ for every $q\in M$. Local equivalence of two distributions is defined analogously.
The equivalence problem consists in constructing invariants of distributions with respect to the equivalence relation defined above. 
The main contribution of Cartan in \cite{cartan10}  was the introduction of the ``reduction-prolongation'' procedure for building invariants and the characterization for $(2,5)$-distributions via a functional invariant (Cartan's tensor) which vanishes precisely when the distribution is flat, that is, when it is locally equivalent to the (unique) graded nilpotent Lie algebra of step $3$ with growth vector $(2,3,5)$. In this case, the Lie algebra of symmetries of the distribution corresponds to the $14$-dimensional Lie algebra ${\mathfrak{g}}_2$ and this situation is maximal, that is, in the non-flat case the dimension of the Lie algebra of symmetries is strictly less than $14$.  In fact, Cartan  gave a geometric description of the flat $G_2$-structure as the differential system that describes space curves of constant torsion $2$ or $1/2$ in the standard unit $3$-sphere (see Section 53 in Paragraph XI in \cite{cartan10}.) It has been a folkloric fact among the control theory community that the flat situation described above occurs in the problem of two $2$-dimensional spheres rolling one against the other without slipping or spinning, assuming that the ratio of their radii is $1\colon3$, see~\cite{bor06} for some historical notes and a thorough attempt of an explanation for this ratio. In fact, whenever the ratio of their radii is different from $1\colon 3$, the Lie algebra of symmetries becomes ${\mathfrak{so}}(3)\times{\mathfrak{so}}(3)$, thus dropping its dimension to 6. A complete answer to this strange phenomenon as well as a geometric reason for Cartan's tensor was finally given in two remarkable papers~\cite{zelenko061,zelenko062} (cf. also \cite{agrachev07}), where a geometric method for construction of functional invariants of generic germs of $(2, n)$-distribution for arbitrary $n\geq5$ is developed. It has been recently observed in~\cite{nurowski12} that the Lie algebra of symmetries of a system of rolling surfaces can be $\mathfrak{g}_2$ in the case of non-constant Gaussian curvature.

As for the rolling model, its two dimensional version has been intensively studied by the control community for quite a while, see for example~\cite{ACL,agrachev99,bryant-hsu,chelouah01,jurd1,marigo-bicchi,murray-sastry}. Indeed, the mechanical problem of a sphere rolling can be traced back to the 19th century, in two seminal papers by S. A. Chaplygin~\cite{Chap1,Chap2}, recently translated. It was not until the publication of the book~\cite{sharpe97} that the higher dimensional problem became better known to the control theorists, though it had been introduced several years before in~\cite{nomizu78}. A major disadvantage of Sharpe's definition was the use of submanifolds of Euclidean space, with a strong dependence on their concrete realizations, nevertheless it still yield some interesting results, for example~\cite{jurdjevic08}. Trying to deal with this inconvenience was the starting point of the studies~\cite{arxiv,norway} in which a coordinate-free model for the rolling dynamics was introduced, where the restrictions of no-twist and no-slip were encoded in terms of the so-called rolling distribution $\RDist$. Recently non-trivial extensions to manifolds with different dimensions~\cite{CK2}, semi-Riemannian manifolds~\cite{ML}, and Cartan geometries~\cite{CGK2} have been presented. Besides geometric issues that are associated to the intrinsic definition for the rolling model (e.g., the question of existence of such dynamics~\cite{GG}), one can address the problem of finding conditions on the pair of manifolds $M$ and $\hat{M}$ so that the rolling model is completely controllable, i.e., if $Q$ denotes the state space of the model of two Riemannian manifolds $M$ and $\hM$ rolling without slipping or spinning, one says that the associated rolling model is completely controllable if, given arbitrary $q_0,q_1\in Q$, one can roll $\hat{M}$ on $M$ without slipping or spinning from the initial position $q_0$ to the final position $q_1$. That typical issue of control theory is usually solved by evaluating, at every point $q\in Q$, the Lie algebra generated by the distribution $\RDist$. It turns out that this approach is almost impossible to carry over for the general $n$-dimensional rolling model (cf. \cite{CK2}) except for $n=3$. On the other hand, when one of the manifolds has constant sectional curvature, the distribution $\RDist$ is a principal bundle connection for the canonical projection map $\pi_{Q,M}\colon Q\to M$ and that key feature enables one to successfully address 
the controllability issue ``without Lie brackets computations'' because the latter reduces to the determination of a certain holonomy group associated to an appropriate linear connection~\cite{CK,CGK}.

In this paper, we study the Lie algebra of symmetries of the rolling distribution $\RDist$ over the state space $Q$. We obtain as consequences of this analysis the answers of two problems arising from the issues above mentioned. The first of these says that, under certain genericity assumptions on $M$ and $\hat{M}$, the distribution $\RDist$ is a principal bundle connection for $\pi_{Q,M}$ if and only if $\hM$ has constant sectional curvature. Our second main result refers to the question of flatness of the rolling distribution for the case of spaces of constant curvature. In this context, a regular distribution of rank $k$ on a manifold of dimension $n$ is said to be flat if it is locally equivalent to its nilpotent approximation. We prove that, as long as the curvatures of $M$ and $\hM$ are different, the rolling distribution is never flat in dimensions $\geq3$, contrary to what happens for the $1:3$ phenomenon in two dimensions described previously.

The paper is organized as follows. In Section~\ref{sec:not} we introduce the basic terminology concerning the higher dimensional rolling problem that will be used throughout the paper. Section~\ref{sec:symm} starts the study of the symmetries of the rolling model, addressing later the restricted case of inner symmetries, that is, symmetries induced by vector fields in the rolling distribution. Section~\ref{sec:principal} presents the first of our main results mentioned above. A key tool in this section is the set $\Sym_0(\RDist)$ of symmetries that lie in the kernel of the differential $(\pi_{Q,M})_*$. In fact, the aforementioned result follows from a complete characterization of $\Sym_0(\RDist)$ as the symmetries induced by the Killing vector fields of $\hM$. In Section~\ref{sec:flat} we present the second main result mentioned in the previous paragraph. We begin by studying the nilpotent approximation of the rolling distribution, from which we can deduce its non-flatness if the dimension is greater than 3.

\section{Notations and terminology}\label{sec:not}

If $\mc{D}$ is a smooth constant rank distribution on $M$, we write $\VF_{\mc{D}}$ for the set of $X\in\VF(M)$ such that $X|_x\in\mc{D}|_x$ for all $x\in M$.
If $N$ is a submanifold of $M$, then we say that $\mc{D}$ is \emph{tangent to $N$},
if $\mc{D}|_x\subset T|_x N$ for all $x\in N$.

\begin{definition}\label{def:sym}
Let $\mc{D}$ be a smooth distribution of constant rank on $M$. Then $X\in\VF(M)$ is called an \emph{infinitesimal symmetry} of $\mc{D}$ if $[X,\VF_{\mc{D}}]\subset\VF_{\mc{D}}$. The vector space of all the infinitesimal symmetries of $\mc{D}$ is denoted by $\Sym(\mc{D})$.

An infinitesimal symmetry $X\in\Sym(\mc{D})$ is called an \emph{inner infinitesimal symmetry} if $X\in\VF_{\mc{D}}$. The set of all inner infinitesimal symmetries is denoted by $\ISym(\mc{D})$.
\end{definition}

\begin{remark}
The set $\ISym(\mc{D})$ is a vector subspace of $\VF_{\mc{D}}$, given by $\ISym(\mc{D})=\Sym(\mc{D})\cap\VF_{\mc{D}}$.
\end{remark}

\begin{definition}
For a distribution $\mc{D}$ on $M$, we define the $\mc{D}$-orbit of $x\in M$, denoted by $\mc{O}_{\mc{D}}(x)$ as the set of all points in $M$ that can be connected to $x$ by an absolutely continuous curve with velocity almost everywhere contained in $\mc{D}$. 
\end{definition}

\begin{remark}
By Nagano-Sussman's theorem, see~\cite{agrachev04}, the orbit $\mc{O}_{\mc{D}}(x)$ is an immersed submanifold of $M$.
\end{remark}

As an abbreviation, we usually refer to infinitesimal symmetries (resp. infinitesimal inner symmetries) of $\mc{D}$ simply as symmetries (resp. inner symmetries) of $\mc{D}$.

For the sake of completeness, we recall some of the terminology for the model of two Riemannian manifolds, one rolling against the other without twisting or slipping, introduced in~\cite{arxiv,CK}. For a more detailed discussion, we refer the reader to~\cite{arxiv}. Let $(M,g)$ and $(\hM,\hg)$ be two oriented connected Riemannian manifolds. The state space $Q=Q(M,\hM)$ of the rolling model is the manifold
\[
Q=Q(M,\hat{M})=\big\{A:T|_x M\to T|_{\hat{x}} \hat{M}\ |&\ x\in M,\ \hat{x}\in\hat{M}, \\
&\ A\ \textrm{linear isometry}, \ \det(A)>0\big\}.
\]

Given a point $q=(x,\hat{x};A)\in Q$, a vector $\ol{X}=(X,\hat{X})\in T|_{(x,\hat{x})}(M\times\hM)$ and any smooth curve $t\mapsto \ol{\gamma}(t)=(\gamma(t),\hat{\gamma}(t))$ in $M\times \hat{M}$ defined on an open interval $I\ni 0$ such that $\ol{\gamma}(0)=(x,\hat{x})$, and $\dot{\ol{\gamma}}(0)=\ol{X}$, the \emph{no-spinning lift} of $\ol{X}$ at $q$
if defined by
\begin{align}\label{eq:2.1:1}
\LNSD(\ol{X})|_q:=\dif{t}\big|_0 \big(P^t_0(\hat{\gamma})\circ A\circ P^0_t(\gamma)\big)\in T|_{q}Q,
\end{align}
where $P^b_a(\gamma)$ (resp. $P_a^b(\hat{\gamma})$)
denotes the parallel transport map along $\gamma$ from $\gamma(a)$ to $\gamma(b)$
(resp. along $\hat{\gamma}$ from $\hat{\gamma}(a)$ to $\hat{\gamma}(b)$).
It is readily seen that the definition of $\LNSD(\ol{X})|_q$ does not depend on the choice of the smooth
curve $\ol{\gamma}$ as long as it satisfies $\ol{\gamma}(0)=(x,\hat{x})$ and $\dot{\ol{\gamma}}(0)=\ol{X}$.

Similarly, we define the \emph{rolling lift} of $X\in T|_x M$ to $q=(x,\hat{x};A)\in Q$ as
\begin{align}\label{eq:2.5:3}
\LRD(X)|_q:=\LNSD(X, AX)|_q.
\end{align}
Notice that $\LRD$ also defines a natural map $\LRD:\VF(M)\to \VF(Q)$ such that $\LRD(X):=\big(q\mapsto \LRD(X)|_q\big)$.

\begin{definition}\label{def:2.5:1}\label{def:rdist}
The \emph{rolling distribution} $\RDist$ on $Q$ is the $n$-dimensional smooth distribution defined, for $(x,\hat{x};A)\in Q$, by
\begin{align}\label{eq:2.5:1}
\RDist|_{(x,\hat{x};A)}=\LRD(T|_x M)|_{(x,\hat{x};A)}.
\end{align}
\end{definition}

An absolutely continuous curve $t\mapsto q(t)=(\gamma(t),\hat{\gamma}(t);A(t))$ in $Q$ that is almost everywhere tangent to $\RDist$ is called a \emph{rolling curve}. This condition can be rewritten as $\dot{q}(t)=\LRD(\dot{\gamma}(t))|_{q(t)}$, a.e. $t$. It was shown in~\cite{arxiv,norway} that such curves are exactly those that describe the dynamics of rolling $M$ against $\hM$ without twisting or spinning. 

As it can be noticed already, there are several fiber and vector bundles that will play an important role in the main results of this article. As an abuse of notation, we will often denote the bundles only by its projection maps. The fiber bundles $\pi_Q:Q\to M\times\hM$ and $\pi_{Q,M}:Q\to M$ are the projections $\pi_Q(x,\hat x;A)=(x,\hat x)$ and $\pi_{Q,M}(x,\hat x;A)=x$. Observe that $\pi_Q$ is a fiber subbundle of the vector bundle $\pi_{T^*M\otimes T\hM}\colon T^*M\otimes T\hM\to M\times\hM$. For any manifold $N$, the map $\pi_{T_m^kN}\colon T_m^kN\to N$ denotes the vector bundle of $(k,m)$-tensors on $N$, and the special case $(k,m)=(1,0)$ for the tangent bundle is simply denoted by $\pi_{TN}$.
Given two fiber bundles $\xi$ and $\eta$ over the same manifold $M$, we denote by $\Cinf(\xi,\eta)$ the space of smooth bundle maps from $\xi$ to $\eta$.
Assuming that $\xi$ and $\eta$ are vector bundles,
for $x\in M$ and $f\in\Cinf(\xi,\eta)$, one defines the vertical derivative $\nu(w)|_u(f)$ of $f$ at $u\in\xi^{-1}(x)$ in the direction of $w\in\xi^{-1}(x)$ as
\[
\nu(w)|_u(f)=\left.\di{}{t}\right|_{t=0}f(u+tw),
\]
which can be identified with an element of the fiber $\eta^{-1}(x)$.
This notion then immediately extends to the situation
where there is a (possibly non vector) fiber subbundle $\lambda$ of $\xi$,
and $f\in\Cinf(\lambda,\eta)$ if, moreover, $\nu(w)|_u$ is tangent to the total space of $\lambda$.

We still need to extend the notion of the vector $\LRD(X)|_q$, $q\in Q$, to an operator acting on tensor valued maps.
Suppose $N$ is a submanifold of $Q$ such that $\RDist$ is tangent to $N$,
i.e., $\RDist|_q\subset T|_q N$ for all $q\in N$.
Suppose that $F:N\to T^k_n(M\times\hat{M})$ is smooth and $\pr_1\circ \pi_{T^k_m(M\times\hat{M})}\circ F=\pi_{Q,M}$
where $\pr_1:M\times\hat{M}\to M$; $(x,\hat{x})\mapsto x$.
For $q=(x,\hat{x};A)\in N$ and $X\in T|_x M$ one defines $\LRD(X)|_q F$ as the element of
$T^k_m(M\times\hat{M})$ given by
\[
\LRD(X)|_q F:=\ol{\nabla}_{(X,AX)} F(q(t)),
\]
where $q(t)$ is any smooth curve in $Q$ such that $\dot{q}(0)=\LRD(X)|_q$ (as vectors)
and $\ol{\nabla}$ is the connection induced by the Levi-Civita connections $\nabla$ and $\hat{\nabla}$ on the bundle $\pi_{T^k_m(M\times\hat{M})}$.
Note that above $F(q(t))$ is a tensor field
along the curve $(\gamma(t),\hat{\gamma}(t)):=\pi_Q(q(t))$,
whose initial velocity is $(\dot{\gamma}(0),\dot{\hat{\gamma}}(0))=(X,AX)$,
so that the expression $\ol{\nabla}_{(X,AX)} F(q(t))$ makes sense.
Moreover, this expression is independent of the choice of the smooth curve $q(t)$ as long as $\dot{q}(0)=\LRD(X)|_q$
(e.g. $q(t)$ could be taken as a rolling curve).


For an inner product space $(V,\langle\cdot\,,\cdot\rangle)$, denote by $\so(V)$ the Lie algebra of skew-symmetric endomorphisms of $V$ with respect to $\langle\cdot\,,\cdot\rangle$. For the rest of the paper, given $x\in M$, we identify the vector space $\bigwedge^2 T|_x M$ with $\so(T|_x M)$ as follows: If $X,Y,Z\in T|_x M$, then
\[
(X\wedge Y)Z=g(Z,Y)X-g(Z,X)Y.
\]

To conclude this section, we present a convenient result that allows us to compute Lie brackets of vector fields on $Q$. Its proof follows after a careful calculation, and the details can be found in~\cite{arxiv}. 

\begin{proposition}
Let $\nabla$ and $\hat\nabla$ be the Levi-Civita connections of $M$ and $\hM$ respectively. Let $\ol{T}=(T,\hat{T}),\ol{S}=(S,\hat{S})\in\Cinf(\pi_Q,\pi_{T(M\times\hat{M})})$ and $\ol{U},\ol{V}\in\Cinf(\pi_Q,\pi_{T^*M\otimes T\hat{M}})$ be such that $\ol{U}(q),\ol{V}(q)\in A\so(T|_x M)$ for all $q=(x,\hat{x};A)\in Q$.
Then if
\[
\mc{X}|_q:=\LNSD(\ol{T}(q))|_q+\nu(\ol{U}(q))|_q,
\quad
\mc{Y}|_q:=\LNSD(\ol{S}(q))|_q+\nu(\ol{V}(q))|_q,
\]
one has
\[
[\mc{X},\mc{Y}]|_q=&\LNSD(\mc{X}|_q \ol{S}-\mc{Y}|_q\ol{T})|_q+\nu(\mc{X}|_q \ol{V}-\mc{Y}|_q\ol{U})|_q \\
&+\nu(AR(T(q),S(q))-\hat{R}(\hat{T}(q),\hat{S}(q))A)|_q+R^{\ol{\nabla}}(\ol{T}(q),\ol{S}(q)),
\]
where $R$ and $\hat R$ are the Riemannian curvatures of $(M,g)$ and $(\hM,\hat{g})$ respectively, and $R^{\ol{\nabla}}$ is the curvature of the connection $\ol\nabla$.
\end{proposition}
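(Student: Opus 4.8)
The plan is to reduce everything to the single observation that the no-spinning lift $\LNSD$ is precisely the horizontal lift of $\ol\nabla$ restricted to $Q$: the curve $t\mapsto P^t_0(\hat\gamma)\circ A\circ P^0_t(\gamma)$ appearing in~\eqref{eq:2.1:1} is exactly the $\ol\nabla$-parallel transport of $A\in T^*M\otimes T\hat M$ along $(\gamma,\hat\gamma)$, so $\NSDist:=\LNSD(T(M\times\hat M))$ is the horizontal distribution of $\ol\nabla|_Q$, and a vector field of the form $\mc X=\LNSD(\ol T)+\nu(\ol U)$ is ``horizontal lift of $\ol T$ plus the fibre-vertical vector $\nu(\ol U)$'', where the vertical fibre of $\pi_Q$ at $q=(x,\hat x;A)$ is identified by $\nu$ with $A\so(T|_x M)$. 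I would then compute $[\mc X,\mc Y]|_q$ by evaluating it, as a derivation, on the two kinds of data that pin down a tangent vector of $Q$ at $q$: the pullbacks $f\circ\pi_Q$ of functions on $M\times\hat M$ (which see the $\pi_Q$-projection of the bracket) and the tautological map $F_0:Q\to T^*M\otimes T\hat M$, $F_0(x,\hat x;A)=A$ (which sees the vertical component). The alternative, and the route whose details are deferred to~\cite{arxiv}, is to fix local orthonormal frames on $M$ and $\hat M$, in which $\LNSD(\ol T)$ and $\nu(\ol U)$ become explicit first-order operators with coefficients built from the connection one-forms $\omega,\hat\omega$; both routes run through the same three mechanisms.

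On the pullbacks one has $\mc X(f\circ\pi_Q)=\diff f(\ol T)$, so iterating this and antisymmetrising produces exactly the combination $\mc X|_q\ol S-\mc Y|_q\ol T$, up to a symmetric Hessian-type term that drops out because $\nabla$ and $\hat\nabla$ are torsion-free; this identifies the $\LNSD$-component of the bracket. On the tautological map the horizontal part of $\mc X$ kills $F_0$ (parallel fields are $\ol\nabla$-constant) while $\nu(\ol U)$ sends it to $\ol U(q)$, so $\mc X F_0=\ol U$ and $\mc Y F_0=\ol V$; feeding this into the Ricci identity $\ol\nabla_{\mc X}\ol\nabla_{\mc Y}-\ol\nabla_{\mc Y}\ol\nabla_{\mc X}-\ol\nabla_{[\mc X,\mc Y]}=R^{\ol\nabla}(\ol T,\ol S)$ for the connection pulled back to $Q$ shows that the vertical component of $[\mc X,\mc Y]|_q$ is $\mc X|_q\ol V-\mc Y|_q\ol U-R^{\ol\nabla}(\ol T(q),\ol S(q))F_0(q)$. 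Since $R^{\ol\nabla}$ is the curvature of the Hom-bundle connection, a direct computation gives $R^{\ol\nabla}(\ol T,\ol S)A=\hat R(\hat T,\hat S)\circ A-A\circ R(T,S)$, which converts that correction into the stated term $\nu(AR(T,S)-\hat R(\hat T,\hat S)A)$; this is also the point at which one checks the assembled vertical vector is genuinely fibre-tangent, using that $\so(T|_x M)$ is a Lie subalgebra and $R,\hat R$ are skew. The residual $R^{\ol\nabla}(\ol T(q),\ol S(q))$ in the statement is then naturally read as the curvature correction that must be carried when the identity is applied to a general tensor-valued map $F$ rather than to $F_0$ — consistently with the way $\LRD(X)|_qF$ and $\LNSD(\ol X)|_qF$ were set up above — and it vanishes when $F$ is scalar, recovering the bare Lie bracket.

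The main obstacle is entirely one of bookkeeping. Because $\ol T,\ol S,\ol U,\ol V$ are functions on the total space $Q$ and not on $M\times\hat M$, every derivative ``$\mc X|_q(\cdot)$'' splits into a horizontal and a vertical piece, and one has to feed exactly the right sub-collection of terms into the curvature and Ricci identities — keeping the Hom-bundle sign conventions straight — so that $R$, $\hat R$ and $R^{\ol\nabla}$ emerge in precisely the stated combination with no over- or under-counting, while simultaneously verifying that the vertical contributions, once summed, land in $A\so(T|_x M)$ even though the individual summands need not. In the frame-based version the sharp point is the same phenomenon in disguise: extracting the second-order term of $P^t_0(\hat\gamma)\circ A\circ P^0_t(\gamma)$ — the Ambrose--Singer form of the curvature of $\ol\nabla$ — and matching it against the structure equations $\diff\omega+\omega\wedge\omega=\Omega$ of the chosen frames. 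Once that is in place the remainder is a long but mechanical expansion, which is why the computational details are left to~\cite{arxiv}.
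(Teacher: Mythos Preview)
Your proposal is correct and aligns with what the paper itself does: the paper offers no proof here, stating only that ``its proof follows after a careful calculation, and the details can be found in~\cite{arxiv}'', and your sketch is precisely that careful calculation. The key identification you make --- that $\LNSD$ is the $\ol\nabla$-horizontal lift restricted to $Q$, so that the bracket decomposes via the standard connection formalism with the vertical correction given by $-R^{\ol\nabla}(\ol T,\ol S)A = AR(T,S)-\hat R(\hat T,\hat S)A$ --- is exactly the mechanism behind the formula, and your reading of the residual $R^{\ol\nabla}(\ol T(q),\ol S(q))$ as the curvature term that appears when the bracket is interpreted as an operator on tensor-valued maps $F$ (rather than as a bare tangent vector) is consistent with how the paper subsequently applies the proposition.
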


\begin{remark}
The above proposition holds true if one replaces everywhere $Q$ by any submanifold $N\subset Q$
such that $\RDist$ is tangent to it, replacing the condition that $\ol{U}(q),\ol{V}(q)\in A\so(T|_x M)$ for $q\in Q$
by the assumption that $\mc{X},\mc{Y}$ be tangent to $N$.
\end{remark}

\section{Symmetries of the Rolling Distribution}\label{sec:symm}

\subsection{General Symmetries}

We begin our study of the symmetries of the rolling model by finding a condition, equivalent to the one in Definition~\ref{def:sym}, for a vector field $S\in\VF(Q)$ to be a symmetry.

\begin{proposition}\label{pr:symeq}
Let $Z\in \Cinf(\pi_{Q,M},\pi_{TM})$, $\hat{Z}\in\Cinf(\pi_{Q,\hat{M}},\pi_{T\hat{M}})$,
$\ol{U}\in\Cinf(\pi_Q,\pi_{T^*M\otimes T\hat{M}})$
be such that $\ol{U}(q)\in A\so(T|_x M)$ for all $q=(x,\hat{x};A)\in Q$.
Defining
\[
S|_q:=\LNSD(Z(q),\hat{Z}(q))|_q+\nu(\ol{U}(q))|_q,
\]
then $S\in \Sym(\RDist)$ if and only if for all $q=(x,\hat{x};A)\in Q$ and all $X\in T|_x M$,
one has
\begin{align}
\ol{U}(q)X&=-A\LRD(X)|_q Z+\LRD(X)|_q\hat{Z} \label{eq:sym:1} \\
\LRD(X)|_q\ol{U}&=-AR(X\wedge Z(q))+\hat{R}(AX\wedge \hat{Z}(q))A \label{eq:sym:2}
\end{align}
\end{proposition}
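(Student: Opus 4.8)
The plan is to use the characterization of symmetries in Definition~\ref{def:sym} together with the Lie bracket formula from the Proposition at the end of Section~\ref{sec:not}, applied to brackets $[S,\LRD(X)]$ with $X\in\VF(M)$ arbitrary. Since $\RDist|_q=\LRD(T|_xM)|_q$ and the rolling lifts $\LRD(X)$ for $X\in\VF(M)$ span $\VF_{\RDist}$ over $C^\infty(Q)$, the condition $S\in\Sym(\RDist)$ is equivalent to $[S,\LRD(X)]|_q\in\RDist|_q$ for all $q\in Q$ and all $X\in\VF(M)$. The strategy is then to compute $[S,\LRD(X)]$ explicitly via the bracket formula, decompose the result into its $\LNSD$-part and its $\nu$-part (plus the curvature term $R^{\ol\nabla}$ of the ambient connection), and read off exactly when the total vector lies in $\RDist|_q=\{\LRD(Y)|_q : Y\in T|_xM\}=\{\LNSD(Y,AY)|_q : Y\in T|_xM\}$.

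First I would set up the bracket. Write $S|_q=\LNSD(Z(q),\hat Z(q))|_q+\nu(\ol U(q))|_q$ and $\LRD(X)|_q=\LNSD(X,AX)|_q+\nu(0)|_q$, so that in the notation of the Proposition we have $\ol T=(Z,\hat Z)$, $\ol U=\ol U$, $\ol S=(X,AX)$, $\ol V=0$. Applying the formula gives
\[
[S,\LRD(X)]|_q
=&\LNSD\big(S|_q(X,AX)-\LRD(X)|_q(Z,\hat Z)\big)|_q
+\nu\big(S|_q 0-\LRD(X)|_q\ol U\big)|_q\\
&+\nu\big(AR(Z(q),X)-\hat R(\hat Z(q),AX)A\big)|_q+R^{\ol\nabla}\big((Z(q),\hat Z(q)),(X,AX)\big).
\]
Here $S|_q(X,AX)$ means the derivative of the map $q\mapsto(X,AX)$ (as a section along $\pi_Q$) along the flow of $S$; its first component is just $\nabla_{(\text{base part of }S)}X$-type data and, crucially, since $X$ is a fixed vector field on $M$ pulled back and $AX$ depends on the $Q$-variable through $A$, the mismatch between the two components of $S|_q(X,AX)-\LRD(X)|_q(Z,\hat Z)$ is governed precisely by $\ol U$. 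The term $R^{\ol\nabla}$ is handled using the identification of $\ol\nabla$ as induced by $\nabla\oplus\hat\nabla$: its curvature acting on $(X,AX)$ and $(Z,\hat Z)$ produces a vertical term $\nu$ of the form $\nu(\text{something})|_q$ that combines with the explicit $\nu(AR-\hat R A)$ term.

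Next I would impose membership in $\RDist|_q$. A vector $\LNSD(\ol Y)|_q+\nu(W)|_q$ lies in $\RDist|_q$ if and only if $W=0$ and the second component $\hat Y$ of $\ol Y=(Y,\hat Y)$ equals $AY$ (the first component). Setting the $\nu$-part of $[S,\LRD(X)]|_q$ to zero yields, after collecting the $\nu(\LRD(X)|_q\ol U)$ term against the curvature terms, precisely equation~\eqref{eq:sym:2}: $\LRD(X)|_q\ol U=-AR(X\wedge Z(q))+\hat R(AX\wedge\hat Z(q))A$, where the sign and the wedge-to-$\so$ identification convert $AR(Z,X)$ into $-AR(X\wedge Z)$ etc. Setting the two components of the $\LNSD$-argument equal — i.e. demanding the $\LNSD$-part be an honest rolling lift — yields that the $\hat M$-component of $S|_q(X,AX)-\LRD(X)|_q(Z,\hat Z)$ equals $A$ times its $M$-component; unwinding the definitions of $\LRD(X)|_q Z$ and $\LRD(X)|_q\hat Z$ (the tensorial lift operator defined in Section~\ref{sec:not}) this is exactly $\ol U(q)X=-A\LRD(X)|_q Z+\LRD(X)|_q\hat Z$, which is~\eqref{eq:sym:1}. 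Conversely, if \eqref{eq:sym:1} and \eqref{eq:sym:2} hold, running the same computation backwards shows $[S,\LRD(X)]|_q\in\RDist|_q$ for all $X$, hence $S\in\Sym(\RDist)$.

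The main obstacle I expect is bookkeeping in the $\LNSD$-argument: carefully justifying that $S|_q(X,AX)$, for $S$ a sum of a no-spinning lift and a $\nu$-term, really produces $\big(\nabla_{Z(q)}X,\ \hat\nabla_{\hat Z(q)}(AX)+(\text{$\nu$-contribution from }\ol U)\big)$, so that the ``component mismatch equals $\ol U(q)X$'' identity comes out cleanly with correct signs. This requires using $\nu(\ol U(q))|_q(AX)=\ol U(q)X$ (the vertical derivative of $A\mapsto AX$ in the direction $\ol U(q)\in A\so(T|_xM)$) and the fact, noted after the Proposition, that the bracket formula is insensitive to the choice of curve realizing $\LRD(X)|_q$, so one may take a rolling curve and use that along it $\dot A=\ol U$-type transport vanishes. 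Matching the curvature terms against $R^{\ol\nabla}$ and verifying the precise form of \eqref{eq:sym:2} under the $\bigwedge^2\cong\so$ identification is the second delicate point, but it is a finite sign-chasing computation once the decomposition is in place.
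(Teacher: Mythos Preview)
Your proposal is correct and follows essentially the same route as the paper: compute $[S,\LRD(X)]$ via the bracket formula, rewrite the result in the form $\LRD(\cdot)+\LNSD(0,\cdot)+\nu(\cdot)$, and observe that membership in $\RDist$ forces the last two pieces to vanish, which are exactly \eqref{eq:sym:1} and \eqref{eq:sym:2}. The paper's proof is slightly more streamlined in that it directly writes out the bracket as
\[
[S,\LRD(X)]|_q=\LRD(\nabla_{Z(q)} X-\LRD(X)|_qZ)|_q+\LNSD(0,\,\ol U(q)X+A\LRD(X)|_q Z-\LRD(X)|_q\hat Z)|_q+\nu(\cdots)|_q,
\]
without isolating an $R^{\ol\nabla}$ contribution separately; your anticipated bookkeeping with that term is not wrong, just unnecessary here since it is absorbed into (or vanishes against) the other pieces once one uses that $\LNSD$ parallel-transports $A$.
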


\begin{proof}
If $X\in\VF(M)$, then
\begin{align}
[S,\LRD(X)]|_q=&\LRD(\nabla_{Z(q)} X)|_q-\LNSD(\LRD(X)|_q Z,\LRD(X)|_q\hat{Z})|_q \nonumber\\
&+\nu(AR(Z(q),X)-\hat{R}(\hat{Z}(q),AX)A)|_q \nonumber\\
&+\LNSD(0,\ol{U}(q)X)|_q-\nu(\LRD(X)|_q\ol{U})|_q \nonumber\\
=&\LRD(\nabla_{Z(q)} X-\LRD(X)|_qZ)|_q \nonumber\\
&+\LNSD(0,\ol{U}(q)X+A\LRD(X)|_q Z-\LRD(X)|_q\hat{Z}(q))|_q \label{eq:LNvanish}\\
&+\nu(-\LRD(X)|_q\ol{U}+AR(Z(q),X)-\hat{R}(\hat{Z}(q),AX)A)|_q.\label{eq:nuvanish}
\end{align}
Note that $S\in\Sym(\RDist)$ if and only if $[S,\LRD(X)]\in\RDist$, for all $X\in\VF(M)$. Hence, $S\in\Sym(\RDist)$ if and only if the terms~\eqref{eq:LNvanish},\eqref{eq:nuvanish} above vanish for every $X\in\VF(M)$.
\end{proof}

We will often use the notation
\[
S_{(Z,\hat{Z},\ol{U})}|_q:=\LNSD(Z(q),\hat{Z}(q))|_q+\nu(\ol{U}(q))|_q.
\]
In~\cite{CK} a notion of curvature especially adapted to the rolling model was introduced. This idea will play a fundamental role in the subsequent developments, so we briefly recall it here for the sake of completeness.
For $q=(x,\hat x;A)\in Q$, the \emph{rolling curvature} is the linear map
\[
\Rol_q\colon\bigwedge^2T|_xM\to T^*|_xM\otimes T|_{\hat x}\hM;
\quad \Rol_q(X\wedge Y):=AR(X,Y)-\hat R(X,Y)A.
\]
For convenience, we also define
\[
\wRol_q:\bigwedge^2T|_xM\to \bigwedge^2T|_xM; \quad \wRol_q(X\wedge Y)=R(X,Y)-A^{-1}\hat R(X,Y)A,
\]
i.e. $\Rol_q=A\wRol_q$.
The fact that the values of $\wRol_q$ are in $\bigwedge^2T|_xM$ instead of just $T^*M\otimes TM$,
follows from well-known properties of the curvature tensors $R,\hat{R}$.

As usual, for a smooth distribution $\mc{D}$, we denote by $\mc{D}^{(k)}$ the $k$th element in the canonical flag of $\mc{D}$, that is the $k$th step in the iterative definition
\[
\mc{D}^{(1)}=\mc{D},\quad\mc{D}^{(k+1)}=\mc{D}^{(k)}+[\mc{D}^{(k)},\mc{D}].
\]

\begin{proposition}\label{pr:vertZ}
Let $S_{(Z,\hat{Z},\ol{U})}\in\Sym(\RDist)$.
Then for all $q=(x,\hat{x};A)\in Q$, $X,Y\in T|_x M$,
\[
A\nu(\Rol_q(X\wedge Y))|_q Z=\nu(\Rol_q(X\wedge Y))|_q \hat{Z}.
\]
\end{proposition}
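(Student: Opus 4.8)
The strategy is to pass to the second term $\RDist^{(2)}$ of the canonical flag of $\RDist$ and to exploit the well-understood structure of its vertical part. Fix $q=(x,\hat x;A)\in Q$ and $X,Y\in T|_xM$, extended to local vector fields near $x$; since the asserted identity at $q$ involves $X,Y$ only through $X|_x,Y|_x$, this is harmless.

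The first step is to recall the ``structure equation'' of the rolling distribution. Applying the bracket Proposition above with $\ol T=(X,AX)$, $\ol S=(Y,AY)$ and vanishing vertical parts --- and using the identity $\LRD(X)|_{q'}(AY)=A\nabla_XY$, which follows at once from the no-spinning property of rolling curves --- one gets, for $X,Y\in\VF(M)$,
\[
[\LRD(X),\LRD(Y)]=\LRD([X,Y])+\nu\big(\Rol_{\cdot}(X\wedge Y)\big),
\]
the $R^{\ol\nabla}$-contribution vanishing because both arguments are of the form $(\cdot,A\,\cdot)$. Hence, on a neighbourhood of $q$, the vector field $\nu(\Rol_{\cdot}(X\wedge Y))=[\LRD(X),\LRD(Y)]-\LRD([X,Y])$ takes values in $\RDist^{(2)}$, and more precisely $\RDist^{(2)}|_{q'}=\LRD(T|_{x'}M)\oplus\nu(\IM\Rol_{q'})$ for every $q'=(x',\hat x';A')$; this direct sum is compatible with the splitting $T|_{q'}Q=\NSDist|_{q'}\oplus(\text{vertical space})$, valid since $\dim Q=\dim(M\times\hM)+\dim\SO(n)$.

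Next, put $\ol W(q'):=\Rol_{q'}(X\wedge Y)=A'\wRol_{q'}(X\wedge Y)\in A'\so(T|_{x'}M)$, so that $\nu(\ol W)\in\VF(Q)$. Since $S=S_{(Z,\hat Z,\ol U)}\in\Sym(\RDist)$ we have $[S,\LRD(W)]\in\VF_{\RDist}$ for all $W\in\VF(M)$, so by the Jacobi identity
\[
[S,\nu(\ol W)]={}&\big[[S,\LRD(X)],\LRD(Y)\big]+\big[\LRD(X),[S,\LRD(Y)]\big]\\
&{}-\big[S,\LRD([X,Y])\big],
\]
which takes values (pointwise) in $\RDist^{(2)}$: the first two summands are brackets of elements of $\VF_{\RDist}$, and the last lies in $\VF_{\RDist}\subset\VF_{\RDist^{(2)}}$. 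On the other hand, computing the same bracket from the Proposition above with $\mc X=S$ (so $\ol T=(Z,\hat Z)$, vertical part $\ol U$) and $\mc Y=\nu(\ol W)$ (purely vertical, i.e.\ $\ol S=0$, whence all curvature terms vanish) yields
\[
[S,\nu(\ol W)]|_q={}&-\LNSD\big(\nu(\ol W)|_qZ,\ \nu(\ol W)|_q\hat Z\big)|_q\\
&{}+\nu\big(S|_q\ol W-\nu(\ol W)|_q\ol U\big)|_q.
\]

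Finally, I would compare $\NSDist$-components. The right-hand side above has $\NSDist$-component equal to $-\LNSD(\nu(\ol W)|_qZ,\nu(\ol W)|_q\hat Z)|_q$, while --- by the description of $\RDist^{(2)}|_q$ in the second step --- the $\NSDist$-component of any element of $\RDist^{(2)}|_q$ lies in $\LRD(T|_xM)=\{\LNSD(V,AV)|_q:V\in T|_xM\}$. Since $\LNSD$ is injective on $T_{(x,\hat x)}(M\times\hM)$, this forces $\big(\nu(\ol W)|_qZ,\nu(\ol W)|_q\hat Z\big)=(V,AV)$ for some $V\in T|_xM$, i.e.\ $\nu(\ol W)|_q\hat Z=A\,\nu(\ol W)|_qZ$; with $\ol W(q)=\Rol_q(X\wedge Y)$ this is exactly the assertion. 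The only genuinely delicate point in the argument is the bookkeeping around the structure equation and the $\NSDist$-versus-vertical splitting: once one has the membership $[S,\nu(\ol W)]|_q\in\RDist^{(2)}|_q$, the rest is formal.
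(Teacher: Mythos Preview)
Your argument is correct and follows essentially the same route as the paper's: both show $[S,\nu(\Rol(X\wedge Y))]\in\RDist^{(2)}$, compute that bracket explicitly via the general bracket formula, and then read off the identity from the $\LNSD$-component using the description $\RDist^{(2)}|_q=\LRD(T|_xM)\oplus\nu(\IM\Rol_q)$. The only cosmetic difference is that you obtain the membership in $\RDist^{(2)}$ by an explicit Jacobi-identity expansion, whereas the paper simply invokes the general fact that $S\in\Sym(\RDist)$ implies $S\in\Sym(\RDist^{(2)})$; these are equivalent.
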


\begin{proof}
Notice that if $S_{(Z,\hat{Z},\ol{U})}\in\Sym(\RDist)$, then $S_{(Z,\hat{Z},\ol{U})}\in\Sym(\RDist^{(k)})$
for all $k\in\N$.
Note that $\nu(\Rol(X\wedge Y))=\big(q\mapsto \nu(\Rol_q(X\wedge Y))|_q\big)$ belongs to $\RDist^{(2)}$
and
\[
[S_{(Z,\hat{Z},\ol{U})}, \nu(\Rol(X\wedge Y))]
=&-\LNSD\big(\nu(\Rol_q(X\wedge Y))|_q Z,\nu(\Rol_q(X\wedge Y))|_q \hat{Z}\big)\big|_q \\
&+\nu\big(\LNSD(Z(q),\hat{Z}(q))|_q\Rol(X\wedge Y)+\nu(\ol{U}(q))|_q\Rol(X\wedge Y)|_q\big)\big|_q \\
&-\nu\big(\nu(\Rol_q(X\wedge Y))|_q \ol{U}\big) \\
=&-\LNSD\big(0,-A\nu(\Rol_q(X\wedge Y))|_q Z+\nu(\Rol_q(X\wedge Y))|_q \hat{Z}\big)\big|_q \\
&-\LRD(\nu(\Rol_q(X\wedge Y))|_q Z)|_q+\nu(\cdots)|_q.
\]
Since $\RDist^{(2)}$ is spanned by vectors of the form $\LRD(X)$ and $\nu(\Rol(X\wedge Y))$
and since $S_{(Z,\hat{Z},\ol{U})}\in\Sym(\RDist^{(2)})$,
it follows that the $\LNSD$ term above vanishes, i.e.
\begin{equation*}
-A\nu(\Rol_q(X\wedge Y))|_q Z+\nu(\Rol_q(X\wedge Y))|_q \hat{Z}=0.\qedhere
\end{equation*}

\end{proof}

\begin{remark}
The above propositions holds true if one replaces everywhere $Q$ by a submanifold $N\subset Q$
to which $\RDist$ is tangent, one replaces the set $\Sym(\RDist)$ by $\Sym(\RDist|_{N})$,
and if the condition that $\ol{U}(q)\in A\so(T|_x M)$ for $q\in Q$
is replaced by the assumption that $S$ be tangent to $N$.
The notation $\pi_{N,M}$ (resp. $\pi_{N,\hat{M}}$, $\pi_{N}$) would mean in this context $\pi_{Q,M}|_N$
(resp.  $\pi_{Q,\hat{M}}|_N$, $\pi_{Q}|_N$).
\end{remark}


\subsection{Inner Symmetries}

The aim of this subsection is to briefly study some basic properties
or inner symmetries of $\RDist$ as well as of $\RDist|_{\mc{O}_{\RDist}(q_0)}$ for $q_0\in Q$.
In particular, we will unveil a connection between the existence of inner symmetries of the type $\LRD(Z)$, $Z\in\VF(M)$, and one of the manifolds having constant sectional curvature.
We will begin by characterizing the inner symmetries.

\begin{proposition}\label{pr:inneq}
The following properties hold.

\begin{itemize}
\item[(i)] If $S_{(Z,\hat{Z},\ol{U})}\in\ISym(\RDist)$, then
for all $q=(x,\hat{x};A)\in Q$,
\[
& \hat{Z}(q)=AZ(q),\ \ol{U}(q)=0, \\
& \Rol_q(X\wedge Z(q))=0,\quad \forall X\in T|_x M.
\]
\item[(ii)] If there exists $Z\in\Cinf(\pi_{Q,M},\pi_{TM})$ such that
$\Rol_q(X\wedge Z(q))=0$ for all $q=(x,\hat{x};A)\in Q$ and $X\in T|_x M$,
then defining $\hat{Z}(q):=AZ(q)$,
we have that 
\[
S_{(Z,\hat{Z},0)}\in\ISym(\RDist).
\]

\end{itemize}
\end{proposition}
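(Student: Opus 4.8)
The plan is to deduce both statements from Proposition~\ref{pr:symeq}, combined with the pointwise splitting of the tangent space: at $q=(x,\hat x;A)\in Q$ one has $T|_qQ=\LNSD(T|_{(x,\hat x)}(M\times\hM))|_q\oplus\nu(A\so(T|_xM))|_q$, the map $\LNSD(\,\cdot\,)|_q$ is injective (it is a linear right inverse of $(\pi_Q)_*|_q$), the map $\nu(\,\cdot\,)|_q$ is injective on $A\so(T|_xM)$, and $\RDist|_q=\{\LNSD(X,AX)|_q:X\in T|_xM\}$ lies in the first summand. Throughout, I would write a generic field on $Q$ in the form $S_{(Z,\hat Z,\ol U)}$ as in the statement.

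For part~(i), suppose $S_{(Z,\hat Z,\ol U)}\in\ISym(\RDist)$. Since $S_{(Z,\hat Z,\ol U)}|_q\in\RDist|_q$, there is a unique $X\in T|_xM$ with $\LNSD(Z(q),\hat Z(q))|_q+\nu(\ol U(q))|_q=\LNSD(X,AX)|_q$; comparing the two summands of the splitting and using the injectivity statements above gives $\ol U(q)=0$, $Z(q)=X$ and $\hat Z(q)=AX=AZ(q)$. As $S_{(Z,\hat Z,\ol U)}$ is in particular a symmetry, \eqref{eq:sym:2} holds; since $\ol U\equiv0$ its left-hand side $\LRD(X)|_q\ol U=\ol\nabla_{(X,AX)}\ol U(q(t))$ vanishes identically, so \eqref{eq:sym:2} reduces to $0=-AR(X\wedge Z(q))+\hat R(AX\wedge AZ(q))A=-\Rol_q(X\wedge Z(q))$ for every $X\in T|_xM$, which is the remaining assertion.

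For part~(ii), put $\hat Z(q):=AZ(q)$ and $\ol U:=0$. Then $S_{(Z,\hat Z,0)}|_q=\LNSD(Z(q),AZ(q))|_q=\LRD(Z(q))|_q\in\RDist|_q$, so the field lies in $\VF_{\RDist}$, and by Proposition~\ref{pr:symeq} it suffices to verify \eqref{eq:sym:1} and \eqref{eq:sym:2} to conclude it is a symmetry, hence an inner one. Equation~\eqref{eq:sym:2} holds because its left side is $0$ (as $\ol U\equiv0$) and its right side equals $-\Rol_q(X\wedge Z(q))$, which vanishes by hypothesis. For \eqref{eq:sym:1}, with $\ol U=0$ and $\hat Z=AZ$, the content is the identity $\LRD(X)|_q\hat Z=A\,\LRD(X)|_q Z$; evaluating along a rolling curve $t\mapsto q(t)=(\gamma(t),\hat\gamma(t);A(t))$ through $q$ with $\dot\gamma(0)=X$, where $A(t)=P^t_0(\hat\gamma)\circ A(0)\circ P^0_t(\gamma)$, one gets $\hat\nabla_{\dot{\hat\gamma}}(A(t)Z(q(t)))=A(t)\nabla_{\dot\gamma}(Z(q(t)))$ because the parallel transport maps are parallel, and evaluation at $t=0$ yields the desired identity.

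I expect the only genuine computation to be this compatibility identity $\LRD(X)|_q(AZ)=A\,\LRD(X)|_q Z$ between the tautological isometry and the rolling-lift covariant derivative; all the rest is bookkeeping with the splitting of $T|_qQ$ and substitution into Proposition~\ref{pr:symeq}. The identity expresses that along a rolling curve $A$ intertwines the Levi-Civita parallel transports of $M$ and $\hM$, so it is the one point where the explicit construction of $\LNSD$ and $\RDist$ — rather than just their formal calculus — is used; it can be recorded as a short lemma or quoted from~\cite{arxiv}, and I regard it as the main, though minor, obstacle.
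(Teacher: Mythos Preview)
Your proposal is correct and follows essentially the same route as the paper: both parts are reduced to Proposition~\ref{pr:symeq} together with the splitting $T|_qQ=\LNSD\oplus\nu$, and the only nontrivial step is the compatibility $\LRD(X)|_q(AZ)=A\,\LRD(X)|_qZ$. The paper phrases that step slightly more economically, observing that $(\cdot)Z(\cdot)-\hat Z(\cdot)\equiv 0$ so $\LRD(X)|_q\big((\cdot)Z(\cdot)-\hat Z(\cdot)\big)=0$, which is exactly your parallel-transport computation (it encodes that $A(t)$ is $\ol\nabla$-parallel along rolling curves).
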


\begin{proof}
(i) Since
\[
S_{(Z,\hat{Z},\ol{U})}=\LRD(Z(q))|_q+\LNSD(0,-AZ(q)+\hat{Z}(q))|_q+\nu(\ol{U}(q))|_q,
\]
we see that if $S_{(Z,\hat{Z},\ol{U})}\in\ISym(\RDist)$, then 
$-AZ(q)+\hat{Z}(q)=0$ and $\ol{U}(q)=0$ for all $q\in Q$.
Then by \eqref{eq:sym:2},
\[
0=\LRD(X)|_q\ol{U}=-AR(X\wedge Z(q))+\hat{R}(X\wedge AZ(q))A=-\Rol_q(X\wedge Z(q)).
\]

(ii) Setting $\ol{U}(q)=0$,
we see that
\[
-A\LRD(X)|_q Z+\LRD(X)|_q \hat{Z}=-\LRD(X)|_q \big((\cdot)Z(\cdot)-\hat{Z}(\cdot)\big)=0=\ol{U}(q)X
\]
and
\[
 -AR(X\wedge Z(q))+\hat{R}(AX\wedge \hat{Z}(q))A
&=-AR(X\wedge Z(q))+\hat{R}(AX\wedge AZ(q))A \\
&=-\Rol_q(X\wedge Z(q))=0=\LRD(X)|_q \ol{U}.
\]
Hence the vector field $S_{(Z,\hat{Z},0)}=\LRD(Z(\cdot))$ satisfies equations \eqref{eq:sym:1}-\eqref{eq:sym:2}, in other words, we have $S_{(Z,\hat{Z},0)}\in\ISym(\RDist)$.\qedhere

\end{proof}

\begin{remark}\label{rem:infdim}
Notice that if $S\in\ISym(\RDist)$, then $fS\in\ISym(\RDist)$ for all $f\in\Cinf(Q)$.
Therefore, if $\ISym(\RDist)$ is a non-trivial space,
it has to be infinite dimensional as a vector space over $\R$.
\end{remark}

\begin{remark}
As before, one can replace in the above proposition the space $Q$ by any of its submanifolds $N$
such that $\RDist$ is tangent to $N$,
if one also replaces $\ISym(\RDist)$ by $\ISym(\RDist|_{N})$.
\end{remark}

\begin{example}
Suppose that $(M,g),(\hat{M},\hat{g})$
both have constant, equal curvature. 
Then any vector field $Y\in\VF(M)$ gives rise to an inner symmetry.
Indeed, defining $Z(q):=Y|_x$, for $q=(x,\hat{x};A)$,
one has $\Rol_q(X\wedge Z(q))=0$ for all $X\in T|_x M$.
In other words, in this setting $\LRD(\VF(M))\subset \ISym(\RDist)$.
\end{example}

Next we present the result announced at the beginning of this subsection.
As a notational remark, if $X,Y\in T|_xM$ is an orthonormal pair of vector,
then $\sigma_{(X,Y)}$ denotes the sectional curvature of $M$ at $x$ with respect of the plane spanned by $X$ and $Y$.
For convenience, we use $\hat\sigma$ for the analogous concept on $\hM$.

\begin{proposition}
Suppose that there exists $Z\in\VF(M)$, $Z\neq 0$, such that
$\LRD(Z)\in\ISym(\RDist)$.
Then $(\hat{M},\hat{g})$ has constant curvature $\hat{c}\in\R$
and for every $x\in M$ such that $Z|_x\neq 0$,
the sectional curvatures of $(M,g)$
along all the planes containing $Z|_x$ are equal to $\hat{c}$.
\end{proposition}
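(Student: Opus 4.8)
The plan is to exploit Proposition~\ref{pr:inneq}(i), which tells us that $\LRD(Z)\in\ISym(\RDist)$ forces $\Rol_q(X\wedge Z|_x)=0$, i.e. $AR(X,Z|_x)=\hat R(AX,AZ|_x)A$, for \emph{every} $q=(x,\hat x;A)\in Q$ and every $X\in T|_xM$. The key point is that $A$ ranges over \emph{all} orientation-preserving linear isometries $T|_xM\to T|_{\hat x}\hM$ and $\hat x$ over all of $\hM$, so this is a very strong pointwise constraint. I would first fix $x$ with $Z|_x\neq 0$, set $v:=Z|_x/\n{Z|_x}$, and rewrite the identity as $R(X,v)=A^{-1}\hat R(AX,Av)A$ for all $X$, i.e. $\wRol_q(X\wedge v)=0$. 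Testing this against an arbitrary $Y\in T|_xM$ and taking the inner product with another vector, the identity becomes an equality between the "$v$-sectional" components of $R$ at $x$ and those of $\hat R$ at $\hat x$ transported by $A$.

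The main step is to extract from the freedom in choosing $A$ that $\hat R$ at $\hat x$ must be isotropic. Concretely: pick any orthonormal pair $\hat e_1,\hat e_2\in T|_{\hat x}\hM$ and any orthonormal pair $e_1,e_2\in T|_xM$ with $e_1=v$; since $\dim\geq 2$ and we are free to choose $A$ (and $\hat x$), there is an $A\in Q$ over $(x,\hat x)$ with $Ae_1=\hat e_1$, $Ae_2=\hat e_2$. The relation $\Rol_q(e_1\wedge e_2)=0$ gives, after pairing with $\hat e_2=Ae_2$ and $\hat e_1=Ae_1$, the scalar identity $\sigma_{(v,e_2)}=\hat\sigma_{(\hat e_1,\hat e_2)}$. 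The left side depends only on $x$ and the plane $\spn\{v,e_2\}$ in $M$; the right side can be made to involve \emph{any} $2$-plane in $T|_{\hat x}\hM$ by varying $\hat e_1,\hat e_2$ and correspondingly $e_2$ (rotating $e_2$ in $v^\perp$). Hence all sectional curvatures $\hat\sigma$ at $\hat x$ coincide with a single number, which is forced (by varying $e_2$ on the $M$-side) to equal the common value $\sigma_{(v,e_2)}$ for every plane through $v$ at $x$. Call this number $\hat c(x,\hat x)$; a priori it could depend on both base points.

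Next I would remove the dependence on $\hat x$ and on $x$. Dependence on $\hat x$: for \emph{fixed} $x$ with $Z|_x\neq 0$, the number $\hat c(x,\hat x)$ equals $\sigma_{(v,e_2)}$, a quantity not involving $\hat x$ at all, so $\hat c(x,\hat x)=:\hat c(x)$ is independent of $\hat x$; but it equals the (now constant over $\hM$) sectional curvature of $\hM$, which by Schur's lemma — or simply because "constant at every point" plus connectedness of $\hM$ — is a genuine global constant $\hat c$. Thus $(\hM,\hat g)$ has constant curvature $\hat c\in\R$. Dependence on $x$: the displayed conclusion only claims that \emph{at each $x$ with $Z|_x\neq 0$}, the sectional curvatures of $M$ along planes containing $Z|_x$ equal $\hat c$; this is exactly the identity $\sigma_{(v,e_2)}=\hat c$ established above for all unit $e_2\perp v$, so nothing further is needed. (One should note the degenerate case $\dim M=2$: then "all planes containing $Z|_x$" is just $T|_xM$ itself, and the argument still gives $\sigma=\hat\sigma=\hat c$ at such $x$, while $\hat M$ being a connected surface of constant curvature at every point is again globally constant.)

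The step I expect to be the main obstacle is the careful bookkeeping in the pairing argument: one must verify that the bilinear identity $g(R(X,v)Y,W)=\hat g(\hat R(AX,Av)AY,AW)$ obtained from $\wRol_q(X\wedge v)=0$, when specialized to $Y=v$, $W=X$, really yields the sectional-curvature equality with the correct plane on each side, and that the orientation constraint $\det A>0$ does not obstruct realizing the needed frames (for $n\geq 3$ one has enough room; for $n=2$ one may need to also use $-e_2$, which flips orientation, but composing with a reflection on the $\hM$-side handles it, and in any case both orthonormal-basis orderings give the same unordered plane and same sectional curvature). Once the realizability of the frames $A$ is in hand, the rest is a direct unwinding of $\Rol_q(X\wedge Z|_x)=0$ from Proposition~\ref{pr:inneq}(i).
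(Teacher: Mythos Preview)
Your proposal is correct and follows essentially the same approach as the paper: invoke Proposition~\ref{pr:inneq}(i) to obtain $\Rol_q(X\wedge Z|_x)=0$, specialize to get the sectional-curvature identity $\sigma_{(X,Z|_x/\n{Z|_x})}=\hat\sigma_{(AX,AZ|_x/\n{Z|_x})}$, and then exploit the freedom in choosing $\hat x$ and $A$ to realize an arbitrary $2$-plane in $T|_{\hat x}\hat M$, forcing $(\hat M,\hat g)$ to have constant curvature equal to the common value of the $Z$-sectional curvatures on $M$. Your treatment is in fact slightly more careful than the paper's on the orientation constraint $\det A>0$ in the case $n=2$, which the paper passes over silently.
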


\begin{proof}
Fix $x_1\in M$ such that $Z|_{{x}_1}\neq 0$.
Then for all $q\in(\pi_{Q,M})^{-1}(x_1)$, say $q=(x_1,\hat{x};A)$, we have
\[
0=\Rol_{q}(X\wedge Z|_{x_1})=AR(X\wedge Z|_{x_1})-\hat{R}(AX\wedge AZ|_{x_1})A,
\quad \forall X\in T|_{x_1} M,
\]
from which we get, whenever $X\in T|_{x_1} M$ is a unit vector orthogonal to $Z|_{x_1}$,
\[
\sigma_{\left(X,Z|_{x_1}/\n{Z|_{x_1}}_g\right)}=\hat{\sigma}_{\left(AX,AZ|_{x_1}/\n{Z|_{x_1}}_g\right)}.
\]

Let us thus fix a unit vector $X\in T|_{x_1} M$ orthogonal to $Z|_{x_1}$.
Given any $\hat{x}\in\hat{M}$ and orthonormal pair of vectors $\hat{X},\hat{Y}\in T|_{\hat{x}}\hat{M}$,
there exists a $A\in Q|_{(x_1,\hat{x})}$
such that $AX=\hat{X}$, $AZ|_{x_1}/\n{Z|_{x_1}}_g=\hat{Y}$,
and therefore,
\[
\hat{\sigma}_{\left(\hat{X},\hat{Y}\right)}=\sigma_{\left(X,Z|_{x_1}/\n{Z|_{x_1}}_g\right)}.
\]
Since $\hat{x}\in \hat{M}$ and $\hat{X},\hat{Y}\in T|_{\hat{x}}\hat{M}$
were an arbitrary point and an arbitrary orthonormal pair of vectors, it follows that the sectional curvatures of $(\hat{M},\hat{g})$
are all equal to $\hat{c}:=\sigma_{(X,Z|_{x_1}/\n{Z|_{x_1}}_g)}$.

Now if $x\in M$ is such that $Z|_x\neq 0$,
then again, for any unit vector $X\in T|_x M$ orthogonal to $Z|_x$,
\[
\sigma_{(X,Z|_{x}/\n{Z|_{x}}_g)}=\hat{\sigma}_{(AX,AZ|_{x}/\n{Z|_{x}}_g)}=\hat{c},
\]
that is, the sectional curvatures of all the two dimensional planes of $T|_x M$
that contain ${Z}|_x$ are equal to $\hat{c}$.
\end{proof}

The following examples show that there do exist
Riemannian manifolds, not both of constant curvature,
for which $\RDist$ in $Q$
has non-trivial (even nowhere vanishing) inner symmetries
of the type as described by the previous proposition.

\begin{example}
Suppose that $(M,g)$ is a Sasakian manifold of dimension $n$ with
a characteristic unit vector field $\xi$ (cf. \cite{boyer98}),
and suppose that $(\hat{M},\hat{g})$ is the $n$-dimensional unit sphere.
We show that $\LRD(\xi)$ is an inner symmetry of $\RDist$ on $Q$.

Indeed, we have for any $X,Y\in T|_x M$,
\[
AR(X\wedge \xi)Y=&A(g(\xi,Y)X-g(X,Y)\xi)
=\hat{g}(A\xi,AY)AX-\hat{g}(AX,AY)A\xi \\
=&(AX\wedge A\xi)AY=\hat{R}(AX\wedge A\xi)AY,
\]
where the last equality follows from the fact that $(\hat{M},\hat{g})$ has
constant curvature $=1$.
Thus for all $q=(x,\hat{x};A)\in Q$ and $X\in T|_x M$,
\[
\Rol_q(X\wedge \xi)=0.
\]
Setting $Z(q):=\xi|_x$, $\hat{Z}(q):=AZ(q)$ and $\ol{U}(q):=0$ for all $q=(x,\hat{x};A)\in Q$,
it follows from Proposition \ref{pr:inneq}
that $\LRD(\xi)=S_{(Z,\hat{Z},0)}\in\ISym(\RDist)$.


\end{example}

\begin{example}
Let $K\in\R$ and suppose that $(\hat{M},\hat{g})$ is a space of constant curvature $K$.
Take as $(M,g)$ a warped product $(I\times N,\diff r^2+f(r)^2h)$
where $(N,h)$ is a Riemannian manifold of dimension $n-1$,
$I$ is a real interval, and $f:I\to\R$ is a strictly positive smooth function
that satisfies
\[
f''=-Kf.
\]

Denote by $\doo_r$ the canonical coordinate vector field on $I$.
We claim that defining for $q=(x,\hat{x};A)\in Q$,
$Z(q)=\doo_r|_{s}$ if $x=(s,y)$, $\hat{Z}(q)=A\doo_r|_s$ and $U(q)=0$,
then $S_{(Z,\hat{Z},0)}\in\ISym(\RDist)$.

Indeed, if $Y,Z\in T|_y N$, we have (see \cite{oneill83}, Chapter 7, Prop. 42)
\[
AR(Y\wedge \doo_r)\doo_r&=-\frac{f''}{f}AY=KAY
=K(AY\wedge A\doo_r)A\doo_r, \\
AR(Y\wedge \doo_r)Z&=f''fh(Y,Z)\doo_r=-Kf^2h(Y,Z)\doo_r=-Kg(Y,Z)\doo_r=K(AY\wedge A\doo_r)AZ,
\]
which proves that $\Rol_q(X\wedge\doo_r)=0$ for all $q=(x,\hat{x};A)\in Q$ and $X\in T|_x M$.
Thus the claim follows again from Proposition \ref{pr:inneq}.
\end{example}

\section{Principal Bundle Structure}\label{sec:principal}

In this section, we state and prove one of the main results of the present paper: we give a necessary condition for the fiber bundle $\pi_{Q,M}\colon Q\to M$ to be a principal bundle. This characterization follows from a fundamental relation between the existence of certain symmetries and the group of Riemannian isometries. We use freely some classical results in Riemannian geometry, which can be found for example in~\cite{kobayashi63,sakai91}.

We introduce the convenient notation $S_{(\hat{Z},\ol{U})}:=S_{(0,\hat{Z},\ol{U})}$.
Moreover, we define
\[
\Sym_0(\RDist):=\{S\in \Sym(\RDist)\ |\ (\pi_{Q,M})_*S=0\},
\]
i.e., $S_{(Z,\hat{Z},\ol{U})}\in\Sym_0(\RDist)$ if and only if $Z=0$ and $S_{(0,\hat{Z},\ol{U})}\in\Sym(\RDist)$.

Observe that there is an equivalent characterization of the elements in $\Sym_0(\RDist)$, which follows easily from Proposition~\ref{pr:symeq}. We simply state it as a fact.

\begin{proposition}\label{pr:sym0eq}
$S_{(\hat{Z},\ol{U})}\in\Sym_0(\RDist)$ if and only if
\begin{align}
\ol{U}(q)X&=\LRD(X)|_q\hat{Z} \label{eq:sym0:1} \\
\LRD(X)|_q\ol{U}&=\hat{R}(AX\wedge \hat{Z}(q))A \label{eq:sym0:2},
\end{align}
for all $q=(x,\hat{x};A)\in Q$ and $X\in T|_x M$.
\end{proposition}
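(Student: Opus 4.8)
The plan is to deduce the statement directly from Proposition~\ref{pr:symeq} by specializing to $Z=0$, after a short remark identifying which vector fields on $Q$ are of the form $S_{(\hat{Z},\ol{U})}$.

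First I would record the decomposition of vector fields on $Q$ that makes the notation $S_{(\hat{Z},\ol{U})}$ legitimate. Since $\pi_Q\colon Q\to M\times\hat{M}$ is a fiber bundle whose vertical tangent space at $q=(x,\hat{x};A)$ is exactly $\nu(A\so(T|_xM))|_q$, and since the no-spinning lift $\LNSD$ furnishes a complement to this vertical space, every $\mc{X}\in\VF(Q)$ can be written as $\mc{X}|_q=\LNSD(\ol{T}(q))|_q+\nu(\ol{U}(q))|_q$ with $\ol{T}=(T,\hat{T})\in\Cinf(\pi_Q,\pi_{T(M\times\hat{M})})$ and $\ol{U}(q)\in A\so(T|_xM)$. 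Because $(\pi_{Q,M})_*\LNSD(\ol{T}(q))|_q=T(q)$ while $(\pi_{Q,M})_*\nu(\ol{U}(q))|_q=0$, we obtain $(\pi_{Q,M})_*\mc{X}=T$; hence $\mc{X}\in\Sym_0(\RDist)$ holds precisely when $T=0$, that is, when $\mc{X}=S_{(\hat{Z},\ol{U})}$ for some $\hat{Z}\in\Cinf(\pi_{Q,\hat{M}},\pi_{T\hat{M}})$ and $S_{(\hat{Z},\ol{U})}\in\Sym(\RDist)$.

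Next I would apply Proposition~\ref{pr:symeq} with $Z=0$, keeping this $\hat{Z}$ and $\ol{U}$; the standing hypothesis $\ol{U}(q)\in A\so(T|_xM)$ required there is exactly the one carried over above. Since $Z$ is the zero section, $\LRD(X)|_q Z=\ol{\nabla}_{(X,AX)}0=0$ and $R(X\wedge Z(q))=R(X\wedge 0)=0$, so conditions~\eqref{eq:sym:1}--\eqref{eq:sym:2} of Proposition~\ref{pr:symeq} collapse to exactly~\eqref{eq:sym0:1}--\eqref{eq:sym0:2}. There is essentially no obstacle in this argument: the only step that is not purely formal is the decomposition of vector fields on $Q$ used in the first paragraph, and that is immediate from the fibered structure $Q\to M\times\hat{M}$ together with the identification of its vertical bundle with $A\so(T|_xM)$.
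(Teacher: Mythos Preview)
Your argument is correct and follows exactly the approach the paper indicates: it states the proposition as an immediate consequence of Proposition~\ref{pr:symeq} (``follows easily from Proposition~\ref{pr:symeq}. We simply state it as a fact''), and your specialization $Z=0$ together with the observation $\LRD(X)|_qZ=0$, $R(X\wedge 0)=0$ is precisely that deduction. The preliminary paragraph justifying why $(\pi_{Q,M})_*S=0$ forces the representation $S=S_{(\hat{Z},\ol{U})}$ is also what the paper records just before the proposition.
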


\vspace{0.5cm}

The following theorem gives a precise bound for the dimension of the vector space $\Sym_0(\RDist)|_{\mc{O}_{\RDist}(q_0)}$ of restrictions of elements of $\Sym_0(\RDist)$ onto an orbit $\mc{O}_{\RDist}(q_0)$.
Note the contrast with the space of inner symmetries since, as observed in Remark~\ref{rem:infdim}, if it is non trivial, then it is infinite dimensional.

\begin{theorem}\label{th:sym0bound}
Let $q_0=(x_0,\hat{x}_0;A_0)\in Q$.
Then the linear space $\Sym_0(\RDist)|_{\mc{O}_{\RDist}(q_0)}$
has dimension at most $\frac{n(n+1)}{2}$.
\end{theorem}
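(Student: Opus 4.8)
The plan is to show that an element $S_{(\hat Z,\ol U)}\in\Sym_0(\RDist)$, when restricted to the orbit $\mc{O}_{\RDist}(q_0)$, is completely determined by its value at the single point $q_0$, and that this value lives in a space of dimension at most $\frac{n(n+1)}{2}$. The first and main step is the uniqueness/rigidity statement: if $S_{(\hat Z,\ol U)}\in\Sym_0(\RDist)$ and $\hat Z(q_0)=0$, $\ol U(q_0)=0$, then $\hat Z$ and $\ol U$ vanish identically on $\mc{O}_{\RDist}(q_0)$. This should follow from the system \eqref{eq:sym0:1}--\eqref{eq:sym0:2}: along any rolling curve $q(t)=(\gamma(t),\hat\gamma(t);A(t))$ with $\dot q(t)=\LRD(\dot\gamma(t))|_{q(t)}$, reading \eqref{eq:sym0:1}--\eqref{eq:sym0:2} as covariant derivatives along $q(t)$ gives a closed first-order linear ODE system for the pair $\big(\hat Z(q(t)),\ol U(q(t))\big)$ (the right-hand sides are linear in this pair, with coefficients built from $A(t)$ and $\hat R$). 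By uniqueness of solutions of linear ODEs, vanishing at $t=0$ forces vanishing along the whole curve; since any point of $\mc{O}_{\RDist}(q_0)$ is joined to $q_0$ by a concatenation of such rolling curves, the conclusion follows on the whole orbit.

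Granting this rigidity, the second step is to define the evaluation map $\mathrm{ev}_{q_0}\colon \Sym_0(\RDist)|_{\mc{O}_{\RDist}(q_0)}\to T|_{\hat x_0}\hM\times\so(T|_{x_0}M)$ sending $S_{(\hat Z,\ol U)}$ to $\big(\hat Z(q_0),A_0^{-1}\ol U(q_0)\big)$ (using that $\ol U(q_0)\in A_0\so(T|_{x_0}M)$). By the rigidity step this map is injective, so $\dim \Sym_0(\RDist)|_{\mc{O}_{\RDist}(q_0)}\le \dim\big(T|_{\hat x_0}\hM\big)+\dim\so(T|_{x_0}M)=n+\frac{n(n-1)}{2}=\frac{n(n+1)}{2}$, which is exactly the asserted bound. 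The numerology here matches $\dim\mathfrak{se}(n)$ — and indeed the later sections identify $\Sym_0(\RDist)$ with symmetries induced by Killing fields of $\hM$ — which is a good sanity check that the target space has been chosen of the right size.

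The main obstacle I anticipate is making the ODE argument in the first step fully rigorous. One has to check that the expressions $\LRD(X)|_q\hat Z$ and $\LRD(X)|_q\ol U$, which were defined via the induced connection $\ol\nabla$ on tensor bundles over $M\times\hM$, really do assemble — as $X$ ranges over $T|_xM$ and $q$ moves along a rolling curve — into an honest covariant ODE for $t\mapsto\big(\hat Z(q(t)),\ol U(q(t))\big)$ with continuous (indeed smooth) coefficients; in particular one must be careful that $\ol U$ is a section of $T^*M\otimes T\hM$ along the projected curve and that $\hat R(AX\wedge\hat Z(q))A$ depends smoothly and linearly on the unknowns. A secondary technical point is the passage from a single rolling curve to the whole orbit: one invokes that $\mc{O}_{\RDist}(q_0)$ is path-connected by concatenations of rolling curves (absolutely continuous curves tangent to $\RDist$), and that the vanishing set of $(\hat Z,\ol U)$ is closed and, by the local ODE uniqueness applied at each point, also open in the orbit topology — hence all of the (connected) orbit. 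Everything else is linear algebra and dimension counting.
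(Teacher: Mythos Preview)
Your proposal is correct and follows essentially the same strategy as the paper: both prove injectivity of the evaluation map $S_{(\hat Z,\ol U)}\mapsto(\hat Z(q_0),A_0^{-1}\ol U(q_0))$ by showing that \eqref{eq:sym0:1}--\eqref{eq:sym0:2} force $(\hat Z,\ol U)$ to satisfy a linear ODE along rolling curves, then concatenate such curves to cover the orbit. The only cosmetic difference is that the paper restricts to rolling curves over \emph{geodesics} of $M$ and combines the two first-order equations into the second-order Jacobi equation for $\hat Y(t)=\hat Z(q(t))$ along $\hat\gamma$ (plus an explicit integral formula for $\ol U$), whereas you work directly with the coupled first-order system along arbitrary rolling curves; your formulation is slightly leaner but the underlying mechanism is identical.
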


\begin{proof}
We claim that the map
\[
\Sym_0(\RDist)|_{\mc{O}_{\RDist}(q_0)}\to T|_{\hat{x}_0} \hat{M}\times \so(T|_{\hat{x}_0}\hat{M});
\quad S_{(\hat{Z},\ol{U})}|_{\mc{O}_{\RDist}(q_0)}\mapsto (\hat{Z}(q_0),A_0^{-1}\ol{U}(q_0))
\]
is injective. This will then imply
\[
\dim \big(\Sym_0(\RDist)|_{\mc{O}_{\RDist}(q_0)}\big)\leq \dim \big(T|_{\hat{x}_0} \hat{M}\times \so(T|_{\hat{x}_0}\hat{M})\big)=\frac{n(n+1)}{2},
\]
which is what we set out to prove.

Indeed, let $q_1=(x_1,\hat{x}_1;A_1)\in \mc{O}_{\RDist}(q_0)$ and
suppose that $\gamma:[0,1]\to M$ is a geodesic such that $\gamma(0)=x_1$ and $\dot{\gamma}(0)=X\in T|_{x_1} M$.
Write $q(t)=(\gamma(t),\hat{\gamma}(t);A(t))$ for the unique rolling curve in $Q$ starting at $q_1$ and satisfying $\pi_{Q,M}(q(t))=\gamma(t)$. 
Also write $\hat{Y}(t):=\hat{Z}(q(t))$.
Notice that 
\[
\LRD(\dot{\gamma}(t))|_{q(t)} \hat{Z}=\ol{\nabla}_{(\dot{\gamma}(t),\dot{\hat{\gamma}}(t))} (\hat{Z}(q(\cdot)))-\nu(\ol{\nabla}_{(\dot{\gamma}(t),\dot{\hat{\gamma}}(t))} A(\cdot))|_{q(t)} \hat{Z}
=\hat{\nabla}_{\dot{\hat{\gamma}}(t)} \hat{Y}(\cdot).
\]
Then by \eqref{eq:sym0:1}-\eqref{eq:sym0:2} one has
\[
\hat{R}(\dot{\hat{\gamma}}(t)\wedge \hat{Y}(t))\dot{\hat{\gamma}}(t)
=&\hat{R}(A(t)\dot{\gamma}(t)\wedge \hat{Z}(q(t)))A(t)\dot{\gamma}(t)
=\big(\LRD(\dot{\gamma}(t))|_{q(t)} \ol{U}(\cdot)\big)\dot{\gamma}(t) \\
=&\LRD(\dot{\gamma}(t))|_{q(t)} \big(\ol{U}(\cdot)\dot{\gamma}(\cdot)\big)-\ol{U}(q(t))\nabla_{\dot{\gamma}} \dot{\gamma} \\
=&\LRD(\dot{\gamma}(t))|_{q(t)} \big(\LRD(\dot{\gamma}(\cdot))|_{q(\cdot)} \hat{Z}(q(\cdot))\big) \\
=&\hat{\nabla}_{\hat{\gamma}(t)}\hat{\nabla}_{\hat{\gamma}(\cdot)} \hat{Y}(\cdot),
\]
where in the second to last equality we used that $\gamma$ is a geodesic.

Therefore, $\hat{Y}$ is a Jacobi field along the geodesic $\hat{\gamma}(t)=\hat{\gamma}_{A_1X}(t)$
and hence is uniquely determined by the initial values $\hat{Y}(0)=\hat{Z}(q_1)$,
$\hat{\nabla}_{A_1 X}\hat{Y}=\LRD(X)|_{q_1} \hat{Z}=\ol{U}(q_1)X$.
Moreover, $\hat{Y}$ uniquely determines $\ol{U}(q(t))$ for all $t$ since
\[
\LRD(\dot{\gamma}(t))|_{q(t)}\ol{U}=\ol{\nabla}_{(\dot{\gamma}(t),\dot{\hat{\gamma}}(t))}\ol{U}(q(\cdot))
-\nu(\ol{\nabla}_{(\dot{\gamma}(t),\dot{\hat{\gamma}}(t))} A(\cdot))|_{q(t)} \ol{U}
=\ol{\nabla}_{(\dot{\gamma}(t),\dot{\hat{\gamma}}(t))}\ol{U}(q(\cdot)),
\]
and hence
\[
\ol{U}(q(t))=P_0^t(\hat{\gamma})\Big(\ol{U}(q_1)+\int_0^t P_s^0(\hat{\gamma})\hat{R}(\dot{\hat{\gamma}}(s)\wedge \hat{Y}(s))P_0^s(\hat{\gamma})\diff s A_1\Big)P_t^0(\gamma).
\]
This implies that if $\hat{Z}(q_1)=\hat{Y}(q_1)$ and $\ol{U}(q_1)=\ol{V}(q_1)$,
then for all $X\in T|_{x_1} M$ and all $t$ one has
$S_{(\hat{Z},\ol{U})}|_{q(t)}=S_{(\hat{Y},\ol{V})}|_{q(t)}$,
where $q(t)=q_{\RDist}(\gamma_X,q_1)$ and $\gamma_X$ is the geodesic with $\gamma_X(0)=x_1$, $\dot{\gamma}_X(0)=X$.

To finish the proof,
suppose $(\hat{Z}(q_0),A_0^{-1}\ol{U}(q_0))=(\hat{Y}(q_0),A_0^{-1}\ol{V}(q_0))$.
Given a point $q=(x,\hat{x};A)\in\mc{O}_{\RDist}(q_0)$,
there exists geodesics $\gamma_i:[0,1]\to M$, $i=1,\dots,N$, on $M$ such that $\gamma_1(0)=x_0$,
$\gamma_N(1)=x$, $\gamma_{i-1}(1)=\gamma_i(0)$, $i=2,\dots,N$
and $q_{\RDist}(\gamma_N\ldots\gamma_2.\gamma_1,q_0)(1)=q$.
Since $\hat{Z}(q_0)=\hat{Y}(q_0)$, $\ol{U}(q_0)=\ol{V}(q_0)$,
we have $S_{(\hat{Z},\hat{U})}|_{q_{\RDist}(\gamma_1,q_0)(t)}=S_{(\hat{Y},\hat{V})}|_{q_{\RDist}(\gamma_1,q_0)(t)}$,
$t\in [0,1]$,
by what we just proved above.
In particular, $\hat{Z}(q_1)=\hat{Y}(q_1)$, $\ol{U}(q_1)=\ol{V}(q_1)$,
where $q_i:=q_{\RDist}(\gamma_i\ldots\gamma_2.\gamma_1,q_0)=q_{\RDist}(\gamma_i,q_{i-1})$.
Inductively we obtain $\hat{Z}(q_N)=\hat{Y}(q_N)$, $\ol{U}(q_N)=\ol{V}(q_N)$,
where $q_N=q$, and so $S_{(\hat{Z},\ol{U})}|_q=S_{(\hat{Y},\ol{V})}|_q$.
Since $q\in \mc{O}_{\RDist}(q_0)$ was arbitrary,
we have proven the claim.
\end{proof}

\begin{remark}
The proof of the previous theorem shows in fact
that for any $q_0\in Q$ the space $\Sym_0(\RDist|_{\mc{O}_{\RDist}(q_0)})$
of all $S\in \Sym(\RDist|_{\mc{O}_{\RDist}(q_0)})$ such that $(\pi_{Q,M}|_{\mc{O}_{\RDist}(q_0)})_*S=0$
has at most dimension $n(n+1)/2$.
\end{remark}

The theorem above has a very natural consequence in the case of a completely controllable rolling dynamics. Recall that the rolling distribution $\RDist$ is said to be completely controllable if $\mc{O}_{\RDist}(q_0)=Q$, $q_0\in Q$.

\begin{corollary}\label{cor:sym0bound}
Assuming that $\RDist$ is completely controllable, then $\Sym_0(\RDist)$ is at most of dimension $\frac{n(n+1)}{2}$.
\end{corollary}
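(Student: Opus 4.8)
The plan is to derive Corollary~\ref{cor:sym0bound} directly from Theorem~\ref{th:sym0bound} (or, more precisely, from the Remark following it) by exploiting the hypothesis of complete controllability. Recall that $\RDist$ is completely controllable means $\mc{O}_{\RDist}(q_0)=Q$ for some (hence every) $q_0\in Q$. Fix such a $q_0$. Under this hypothesis the orbit $\mc{O}_{\RDist}(q_0)$ is all of $Q$, so the restriction map $S\mapsto S|_{\mc{O}_{\RDist}(q_0)}$ from $\Sym_0(\RDist)$ to $\Sym_0(\RDist)|_{\mc{O}_{\RDist}(q_0)}$ is simply the identity: every element of $\Sym_0(\RDist)$ \emph{is} its own restriction to the orbit. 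Consequently
\[
\dim \Sym_0(\RDist) = \dim\big(\Sym_0(\RDist)|_{\mc{O}_{\RDist}(q_0)}\big) \leq \frac{n(n+1)}{2},
\]
where the inequality is exactly the content of Theorem~\ref{th:sym0bound}.

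One small technical point must be addressed for the argument to be completely clean: Theorem~\ref{th:sym0bound} bounds the dimension of $\Sym_0(\RDist)|_{\mc{O}_{\RDist}(q_0)}$, i.e.\ the image of the restriction map, and a priori one should check that a symmetry $S\in\Sym_0(\RDist)$ whose restriction to the orbit vanishes must itself vanish. But when $\mc{O}_{\RDist}(q_0)=Q$ this is immediate, since restriction to $Q$ is the identity map on $\VF(Q)$; there is nothing outside the orbit on which $S$ could be nonzero. So the restriction map is not merely surjective onto its image but is literally the identity, and the dimension of the domain equals the dimension of the codomain.

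I would therefore write the proof as a two-line deduction: invoke the definition of complete controllability to identify $\mc{O}_{\RDist}(q_0)$ with $Q$, note that the restriction map $\Sym_0(\RDist)\to\Sym_0(\RDist)|_{\mc{O}_{\RDist}(q_0)}$ is then the identity, and apply Theorem~\ref{th:sym0bound}. There is essentially no obstacle here; the entire substance of the result — the injectivity of $S_{(\hat Z,\ol U)}|_{\mc{O}_{\RDist}(q_0)}\mapsto(\hat Z(q_0),A_0^{-1}\ol U(q_0))$ via the Jacobi-field argument — has already been carried out in the proof of the theorem. The only thing worth double-checking is that the definition of complete controllability is independent of the base point $q_0$ (so that the bound holds uniformly), which follows from the fact that being in the same $\RDist$-orbit is an equivalence relation; this is standard and can be mentioned in passing or simply taken for granted.
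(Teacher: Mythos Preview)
Your proposal is correct and follows exactly the approach implicit in the paper, which states the corollary without proof as an immediate consequence of Theorem~\ref{th:sym0bound}: once $\mc{O}_{\RDist}(q_0)=Q$, the restriction map is the identity and the bound applies directly. Your discussion of the ``technical point'' is fine but, as you note yourself, entirely vacuous in this setting.
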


The next proposition provides a lower bound for the dimension of the space $\Sym_0(\RDist)$ in terms of Riemannian isometries of $(\hM,\hat g)$. 

\begin{proposition}\label{pr:sym0lowbound}
The dimension of the space $\Sym_0(\RDist)$ is at least $\dim\mathrm{Kil}(\hat{M},\hat{g})$,
where $\mathrm{Kil}(\hat{M},\hat{g})$ is the Lie-algebra of Killing fields of $(\hat{M},\hat{g})$.
\end{proposition}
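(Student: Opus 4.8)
The plan is to construct, for each Killing field $\hat{Y}\in\mathrm{Kil}(\hat{M},\hat{g})$, an explicit element of $\Sym_0(\RDist)$ and to check that this assignment is linear and injective. Given a Killing field $\hat{Y}$ on $\hat{M}$, I would set $\hat{Z}(q):=\hat{Y}|_{\hat{x}}$ and $\ol{U}(q):=A\circ\big(-(\nabla^{\hat{M}}\hat{Y})|_{\hat{x}}\big)^{\flat}$ for $q=(x,\hat{x};A)$; more precisely, recall that a Killing field $\hat{Y}$ satisfies $\hat{\nabla}\hat{Y}\in\so(T\hat{M})$ pointwise, so $\hat{\nabla}\hat{Y}|_{\hat{x}}$ is a skew-symmetric endomorphism of $T|_{\hat{x}}\hat{M}$, and one defines $\ol{U}(q):=-\big(\hat{\nabla}\hat{Y}|_{\hat{x}}\big)\circ A$, which indeed lies in $\so(T|_{\hat{x}}\hat{M})\circ A$; equivalently, after transporting by $A^{-1}$ one gets an element of $\so(T|_x M)$, so $\ol{U}(q)\in A\so(T|_x M)$ as required for Proposition~\ref{pr:sym0eq}. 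Clearly $Z=0$, so $(\pi_{Q,M})_*S_{(\hat{Z},\ol{U})}=0$, and the map $\hat{Y}\mapsto S_{(\hat{Y}|_{\hat{x}},\,-(\hat{\nabla}\hat{Y})A)}$ is $\R$-linear.

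The heart of the proof is to verify that $S_{(\hat{Z},\ol{U})}\in\Sym_0(\RDist)$, i.e., that equations \eqref{eq:sym0:1} and \eqref{eq:sym0:2} hold. For \eqref{eq:sym0:1} one computes $\LRD(X)|_q\hat{Z}=\ol{\nabla}_{(X,AX)}\big(\hat{Y}|_{\hat{\gamma}(\cdot)}\big)=\hat{\nabla}_{AX}\hat{Y}$ (the $M$-component of $\ol{\nabla}$ contributes nothing since $\hat{Z}$ depends only on $\hat{x}$ and $AX$ is the $\hat{M}$-velocity of a rolling curve through $q$), and this equals $(\hat{\nabla}\hat{Y}|_{\hat{x}})(AX)=-\ol{U}(q)X$; so actually the natural sign convention forces $\ol{U}(q):=(\hat{\nabla}\hat{Y}|_{\hat{x}})\circ A$, i.e.\ $\ol{U}(q)X=\hat{\nabla}_{AX}\hat{Y}$, and I would fix signs accordingly. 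For \eqref{eq:sym0:2} I would differentiate this relation along a rolling curve: $\LRD(X)|_q\ol{U}$ applied to $X$ (with $X$ the velocity of a geodesic $\gamma$ so that $\nabla_{\dot\gamma}\dot\gamma=0$ and $A(t)\dot\gamma(t)=\dot{\hat{\gamma}}(t)$ is the velocity of the $\hat{M}$-curve, which is itself a geodesic of $\hat{M}$) becomes $\hat{\nabla}_{\dot{\hat{\gamma}}}\hat{\nabla}_{\dot{\hat{\gamma}}}\hat{Y}$, and the classical identity for Killing fields $\hat{\nabla}_V\hat{\nabla}_V\hat{Y}=-\hat{R}(V,\hat{Y})V$ (equivalently, the second-order Killing equation / the fact that $\hat{Y}$ restricted to a geodesic is a Jacobi field) gives exactly $-\hat{R}(\dot{\hat{\gamma}},\hat{Y})\dot{\hat{\gamma}}$; rearranging and using the $A$-factor and the wedge notation identifies this with $\hat{R}(AX\wedge\hat{Z}(q))A$ applied to $X$. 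Since this holds for all geodesic directions $X$ and both sides are tensorial in $X$ (one verifies the bilinear/polarized version via $\hat{R}(V,\hat{Y})W=\hat{\nabla}_V\hat{\nabla}_W\hat{Y}-\hat{\nabla}_{\nabla_V W}\hat{Y}$), equation \eqref{eq:sym0:2} follows.

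Finally, for injectivity: if $S_{(\hat{Z},\ol{U})}=0$ as a vector field on $Q$ then in particular $\hat{Z}\equiv 0$, so $\hat{Y}|_{\hat{x}}=0$ for every $\hat{x}\in\hat{M}$, whence $\hat{Y}=0$. (Alternatively, $\hat{Z}(q_0)=\hat{Y}|_{\hat{x}_0}$ and $A_0^{-1}\ol{U}(q_0)$ recover the $1$-jet of $\hat{Y}$ at $\hat{x}_0$, which determines a Killing field on a connected manifold.) Thus $\dim\Sym_0(\RDist)\geq\dim\mathrm{Kil}(\hat{M},\hat{g})$.

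The main obstacle I expect is purely bookkeeping rather than conceptual: getting every sign and every conjugation by $A$ right when passing between the intrinsic Killing/Jacobi identities on $\hat{M}$ and the $A$-twisted quantities $\hat{Z},\ol{U}$ appearing in Proposition~\ref{pr:sym0eq}, and in particular making sure that the computation of $\LRD(X)|_q\ol{U}$ correctly uses that along a rolling curve over a geodesic of $M$ the base curve on $\hat{M}$ is again a geodesic and $\ol{\nabla}A(\cdot)=0$, so that the $\nu$-terms drop out. Everything else is a direct application of Proposition~\ref{pr:sym0eq} together with standard facts about Killing fields.
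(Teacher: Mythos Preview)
Your approach is correct and is essentially the paper's: for a Killing field $\hat{K}$ set $\hat{Z}(q)=\hat{K}|_{\hat{x}}$ and $\ol{U}(q)=(\hat{\nabla}\hat{K}|_{\hat{x}})A$ (your corrected sign), then verify \eqref{eq:sym0:1}--\eqref{eq:sym0:2} via Proposition~\ref{pr:sym0eq} and note injectivity. The only difference is that the paper dispatches \eqref{eq:sym0:2} in one line by invoking the Killing identity $\hat{\nabla}_{\hat{X}}(\hat{\nabla}\hat{K})=\hat{R}(\hat{X}\wedge\hat{K})$ directly, giving $\LRD(X)|_q\ol{U}=\hat{\nabla}_{AX}(\hat{\nabla}\hat{K})\,A=\hat{R}(AX\wedge\hat{K})A$, so your detour through geodesics, Jacobi fields and polarization is unnecessary.
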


\begin{proof}
Let $\hat{K}$ be a Killing field on $(\hat{M},\hat{g})$.
Defining $\hat{Z}(q):=\hat{K}|_{\hat{x}}$, $\ol{U}(q):=\hat{\nabla}\hat{K}|_{\hat{x}} A$,
for $q=(x,\hat{x};A)\in Q$
and recalling that $\hat{K}$ satisfies
\[
\hat{\nabla}_{\hat{X}}(\hat{\nabla}\hat{K})=\hat{R}(\hat{X}\wedge \hat{K}),\quad \forall \hat{X}\in T\hat{M},
\]
we see that Eqs. \eqref{eq:sym0:1}-\eqref{eq:sym0:2} are satisfied:
\[
& \LRD(X)|_q\hat{Z}=\hat{\nabla}_{AX}\hat{K}=(\hat{\nabla}\hat{K})AX=\ol{U}(q)X \\
& \LRD(X)|_q\ol{U}=(\LRD(X)|_q \hat{\nabla}\hat{K})A=\hat{\nabla}_{AX}(\hat{\nabla}\hat{K})A
=\hat{R}(AX\wedge \hat{K})A=\hat{R}(AX\wedge \hat{Z}(q))A,
\]
i.e. $S_{(\hat{Z},\ol{U})}\in\Sym_0(\RDist)$.
Therefore, each Killing field of $(\hat{M},\hat{g})$ determines
a unique element of $\Sym_0(\RDist)$,
and this implies the claim.
\end{proof}

Using the notations above, we present a technical lemma to identify Killing vector fields on $\hM$ via symmetries of the rolling model.

\begin{lemma}\label{le:sym0iso}
Suppose that $S_{(\hat{Z},\ol{U})}\in\Sym_0(\RDist)$.
If there is a vector field $\hat{K}\in\VF(\hat{M})$ such that $\hat{Z}(q)=\hat{K}|_{\hat{x}}$
for all $q=(x,\hat{x};A)$, then $\hat{K}$ is a Killing field on $(\hat{M},\hat{g})$.
Moreover, if this is the case, $\ol{U}(q)=\hat{\nabla}\hat{K}|_{\hat{x}}A$
for all $q=(x,\hat{x};A)\in Q$.
\end{lemma}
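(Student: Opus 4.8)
The plan is to exploit the two structural equations \eqref{eq:sym0:1}--\eqref{eq:sym0:2} characterizing $\Sym_0(\RDist)$ together with the fact that $\hat{Z}$ is now assumed to descend to a genuine vector field $\hat{K}$ on $\hM$. The first step is to unwind the meaning of $\LRD(X)|_q\hat{Z}$ when $\hat{Z}(q)=\hat{K}|_{\hat{x}}$: since the rolling lift acts on tensor-valued maps along the rolled curve via $\ol{\nabla}$, and along a rolling curve $q(t)=(\gamma(t),\hat\gamma(t);A(t))$ one has $\dot{\hat\gamma}(t)=A(t)\dot\gamma(t)$, the quantity $\LRD(X)|_q\hat{Z}$ equals $\hat{\nabla}_{AX}\hat{K}$ (the $A(\cdot)$-dependence drops out because $\hat{K}$ only depends on the base point $\hat{x}$). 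Plugging this into \eqref{eq:sym0:1} gives $\ol{U}(q)X=\hat{\nabla}_{AX}\hat{K}=(\hat{\nabla}\hat{K})|_{\hat{x}}AX$ for all $X\in T|_xM$, which, since $A$ is an isomorphism onto $T|_{\hat{x}}\hM$, is exactly the assertion $\ol{U}(q)=\hat{\nabla}\hat{K}|_{\hat{x}}A$ claimed at the end of the lemma. So the "moreover" part is essentially immediate once the first step is done carefully.

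Next I would feed this expression for $\ol{U}$ into the second structural equation \eqref{eq:sym0:2}. Again using that $\LRD(X)|_q$ acts through $\hat{\nabla}_{AX}$ on tensor fields depending only on $\hat{x}$, one computes $\LRD(X)|_q\ol{U}=\bigl(\LRD(X)|_q(\hat{\nabla}\hat{K})\bigr)A=\bigl(\hat{\nabla}_{AX}(\hat{\nabla}\hat{K})\bigr)A$; here one must note that the extra $A(\cdot)$-factor sitting to the right of $\hat{\nabla}\hat{K}$ contributes a $\nu$-type (vertical) term that does not enter the tensorial identity in \eqref{eq:sym0:2}, so the effective identity is $\hat{\nabla}_{AX}(\hat{\nabla}\hat{K})=\hat{R}(AX\wedge\hat{K})$ for all $X$. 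Since $A$ is onto, this says $\hat{\nabla}_{\hat{Y}}(\hat{\nabla}\hat{K})=\hat{R}(\hat{Y}\wedge\hat{K})$ for every $\hat{Y}\in T\hM$, which is precisely the classical second-order characterization of Killing fields (equivalently, $\hat{\nabla}^2_{\hat{Y},\hat{Z}}\hat{K}=\hat{R}(\hat{Y},\hat{K})\hat{Z}$, or that $\hat{\nabla}\hat{K}$ is skew together with this curvature identity). Invoking the standard fact (from \cite{kobayashi63} or \cite{sakai91}) that a vector field satisfying this equation is Killing — or, more carefully, first extracting skew-symmetry of $\hat{\nabla}\hat{K}$ from the hypothesis $\ol{U}(q)\in A\so(T|_xM)$ which is built into the definition of elements of $\Sym_0(\RDist)$, and then noting the curvature identity forces the Killing property — completes the argument.

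The one genuine subtlety, and the step I would present most carefully, is the passage from $\LRD(X)|_q F$ for tensor-valued $F$ to an honest covariant derivative $\hat{\nabla}_{AX}$ on $\hM$: one has to be precise that $\LRD(X)|_qF=\ol\nabla_{(X,AX)}F(q(\cdot))$ along a rolling curve, decompose the pair-connection $\ol\nabla$ into its $\nabla$ and $\hat{\nabla}$ components, and observe that because the relevant maps ($\hat{K}$, $\hat{\nabla}\hat{K}$) are pulled back from $\hM$ via $\pi_{Q,\hM}$ and do not see the $M$-factor or the fibre variable $A$, only the $\hat{\nabla}_{\dot{\hat\gamma}}=\hat{\nabla}_{AX}$ piece survives. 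Once that identification is pinned down, the remaining manipulations are just the Killing-field calculus already used in the proof of Proposition~\ref{pr:sym0lowbound}, run in reverse. I do not expect any real obstacle beyond this bookkeeping; the lemma is essentially the converse of Proposition~\ref{pr:sym0lowbound}, and its proof mirrors that one step for step.
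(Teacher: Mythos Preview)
Your argument is correct and its first half is exactly the paper's proof, but you take an unnecessary detour. The paper uses only \eqref{eq:sym0:1}: from $\ol{U}(q)X=\LRD(X)|_q\hat{Z}=\hat{\nabla}_{AX}\hat{K}$ one reads off $\ol{U}(q)=\hat{\nabla}\hat{K}|_{\hat{x}}A$, and since by hypothesis $\ol{U}(q)\in A\,\so(T|_xM)=\so(T|_{\hat{x}}\hM)A$, the map $\hat{\nabla}\hat{K}|_{\hat{x}}$ is skew-symmetric at every $\hat{x}$, which is already the Killing condition. That is the whole proof; \eqref{eq:sym0:2} is never invoked.

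Your additional step through \eqref{eq:sym0:2} to reach $\hat{\nabla}_{\hat{Y}}(\hat{\nabla}\hat{K})=\hat{R}(\hat{Y}\wedge\hat{K})$ is valid but superfluous, and your first phrasing of it (``the standard fact \dots\ that a vector field satisfying this equation is Killing'') is not right: that second-order identity alone does \emph{not} characterize Killing fields (on a flat manifold it only says $\hat{K}$ is affine). You rescue yourself with the ``more carefully'' clause, which is precisely the paper's argument, so you should simply lead with that and drop the curvature-identity route. Also, your remark that the $A(\cdot)$-factor contributes a ``$\nu$-type term that does not enter'' is slightly off: the correct reason nothing extra appears is that along a rolling curve $A(t)$ is $\ol{\nabla}$-parallel, so $\LRD(X)|_q(A)=0$ as a tensor.
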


\begin{proof}
If there exists $\hat{K}\in\VF(\hat{M})$ such that $\hat{Z}(q)=\hat{K}|_{\hat{x}}$ for all $q=(x,\hat{x};A)\in Q$,
then for all $X\in T|_x M$
\[
\ol{U}(q)X=\LRD(X)|_q\hat{Z}=\hat{\nabla}_{AX} \hat{K}.
\]
Writing $\ol{U}(q)=\hat{U}(q)A$, where $\hat{U}\in\Cinf(\pi_{Q,\hat{M}},\pi_{\so(T\hat{M})})$,
we get
\[
\hat{\nabla}_{\hat{X}}\hat{K}=\hat{U}(q)\hat{X},
\]
for all $\hat{X}\in T|_{\hat{x}}\hat{M}$. Since $\hat{U}(q)\in\so(T|_{\hat{x}}\hat{M})$,
one sees that $\hat{\nabla}\hat{K}|_{\hat{x}}$ is a skew-symmetric map,
and since $\hat{x}\in\hat{M}$ was arbitrary, it follows that $\hat{K}$ is a Killing field.
Moreover, $\ol{U}(q)=\hat{U}(q)A=\hat{\nabla}\hat{K}|_{\hat{x}}A$.
\end{proof}

From now on, $O$ will be an open subset of $Q$ and $S_{(Z,\ol{U})}\in\Sym(\RDist|_O)$.
In studying $S_{(\hat{Z},\ol{U})}\in\Sym_0(\RDist|_O)$,
we write from now on
\[
\ol{U}(q)=\hat{U}(q)A=AU(q),
\]
where $U\in\Cinf(\pi_{O,M},\pi_{\so(TM)})$, $\hat{U}\in \Cinf(\pi_{O,\hat{M}},\pi_{\so(T\hat{M})})$.
As before $\pi_{O,M}=\pi_{Q,M}|_O$ and $\pi_{O,\hat{M}}=\pi_{Q,\hat{M}}|_O$.

The next two propositions reveal some interesting phenomena that occurs when we assume that the rolling curvature map is invertible. They form a core technical part of the proof of Theorem~\ref{th:sym0}, which is the main result of this section.

\begin{proposition}\label{pr:nuU_LZ}
Suppose that $O$ is an open subset of $Q$ such that for all $q=(x,\hat{x};A)\in O$, the map
$\wRol_q:\bigwedge^2 T|_x M\to \bigwedge^2 T|_x M$ is invertible.
Then for all $\hat{C}\in\so(T|_{\hat{x}} \hat{M})$ and $\hat{Y}\in T|_{\hat{x}} \hat{M}$, where $q=(x,\hat{x};A)\in O$, one has
\[
\big(\nu(\hat{C}A)|_q\hat{U}\big)\hat{Y}=\LNSD(0,\hat{C}\hat{Y})|_q\hat{Z}-\hat{U}(q)\hat{C}\hat{Y}.
\]
Equivalently, for all $C\in\so(T|_x M)$, $W\in T|_x M$,
\[
\big(\nu(AC)|_q\hat{U}\big)AW=-\LNSD(CW,0)|_q\hat{Z}.
\]
\end{proposition}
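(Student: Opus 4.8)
The plan is to compute the Lie bracket $[S_{(\hat Z,\ol U)}, \nu(\hat C A)|_\cdot]$ using the main bracket proposition, exploit that $S_{(\hat Z,\ol U)}\in\Sym(\RDist|_O)$, hence $S_{(\hat Z,\ol U)}\in\Sym(\RDist^{(k)}|_O)$ for every $k$, and finally use the invertibility of $\wRol_q$ to conclude that $\RDist^{(2)}$ contains all vertical directions of the form $\nu(\hat C A)$. Let me outline the steps.

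First I would observe that, since $\wRol_q$ is invertible on $O$, the values $\Rol_q(X\wedge Y) = A\wRol_q(X\wedge Y)$ as $X\wedge Y$ ranges over $\bigwedge^2 T|_xM$ span the full fiber $A\so(T|_xM)$ of the vertical bundle $\ker(\pi_Q)_*$. Combined with Proposition (the one stating $\RDist^{(2)}$ is spanned by vectors of the form $\LRD(X)$ and $\nu(\Rol(X\wedge Y))$, used already in the proof of Proposition~\ref{pr:vertZ}), this shows that on $O$ one has $\RDist^{(2)}|_q = \RDist|_q \oplus \ker(\pi_Q)_*|_q$; in particular every vertical field $q\mapsto\nu(\hat C(q)A)|_q$ with $\hat C(q)\in\so(T|_{\hat x}\hat M)$ is a section of $\RDist^{(2)}|_O$. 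So the bracket $[S_{(\hat Z,\ol U)},\nu(\hat C A)]$ must land in $\RDist^{(3)}|_O$; more usefully, decomposing the bracket into its $\LNSD$, $\nu$ and $\LRD$ parts, the ``genuinely new'' $\LNSD$-component (the part along $\LNSD(0,\cdot)$ that is not of rolling-lift type) must vanish because $S_{(\hat Z,\ol U)}$ is a symmetry of every $\RDist^{(k)}$ — this is exactly the mechanism used in Proposition~\ref{pr:vertZ}.

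Next I would carry out the bracket computation. Apply the main bracket proposition with $\mc X = S_{(\hat Z,\ol U)} = \LNSD(0,\hat Z(q))|_q + \nu(\ol U(q))|_q$ (here $Z=0$, $\ol T=(0,\hat Z)$, and the $\ol U$ of that proposition is our $\ol U=\hat U A$), and with $\mc Y = \nu(\hat C A)|_\cdot$, i.e. $\ol S=0$ and the second vertical slot equal to $\hat C A$. Since $\ol S=0$ both the curvature term $\nu(AR(0,\cdot)-\hat R(0,\cdot)A)$ and $R^{\ol\nabla}(0,\cdot)$ drop out, and we get
\[
[S_{(\hat Z,\ol U)},\nu(\hat C A)]|_q = \LNSD\!\big(-\,\nu(\hat C A)|_q\hat Z(\cdot)\big)|_q + \nu\!\big(\mc X|_q(\hat C A)(\cdot) - \nu(\hat C A)|_q\ol U(\cdot)\big)|_q,
\]
where by $\LNSD(-\nu(\hat C A)|_q\hat Z)$ I mean $\LNSD\big((0,-\nu(\hat C A)|_q\hat Z)\big)|_q$ since the $M$-component of $\hat Z$ is zero (recall $Z=0$), and I would expand $-\nu(\hat C A)|_q\hat Z = -\LNSD(0,\hat C\hat Y)|_q\hat Z$ after writing $\nu(\hat C A)|_q = \LNSD(0,\hat C A\cdot)$... — more precisely, noting that moving in the fiber of $\pi_Q$ in direction $\hat C A$ corresponds to differentiating $\hat Z(q(t))$ where $q(t)=(x,\hat x; e^{t\hat C}A)$. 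Now I would split the $\LNSD(0,\cdot)$ term: the part of $-\nu(\hat C A)|_q\hat Z$ lying in $\ol U(q)(\cdot)$-type directions, together with $\LRD$-type directions, is allowed, but using the symmetry property of $\RDist^{(2)}$ the truly transversal $\LNSD$ piece must be zero. After this bookkeeping the vanishing of the bad $\LNSD$ component should read precisely
\[
\big(\nu(\hat C A)|_q\hat U\big)\hat Y = \LNSD(0,\hat C\hat Y)|_q\hat Z - \hat U(q)\hat C\hat Y,
\]
which is the first claimed identity. The second (equivalent) form then follows by setting $\hat C = A C A^{-1}$, $\hat Y = AW$ and using $\LNSD(0,\hat C\hat Y)|_q\hat Z = \LNSD(0,A C W)|_q\hat Z$ versus $\LNSD(CW,0)|_q\hat Z$: here I would invoke Proposition~\ref{pr:vertZ}, or rather the computation $\LRD(W)|_q\hat Z - \LNSD(0,AW)|_q\hat Z = \LNSD(W,0)|_q\hat Z$ together with $\LRD(W)|_q\hat Z = \ol U(q)W = \hat U(q)AW$, to rewrite the mixed-lift difference and collapse the right-hand side to $-\LNSD(CW,0)|_q\hat Z$.

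I expect the main obstacle to be purely organizational rather than deep: correctly identifying which vertical direction $\nu(\hat C A)$ corresponds to (the fiber of $\pi_Q$, not of $\pi_{Q,M}$), keeping straight the three slots $\hat U$, $\ol U = \hat U A = A U$, and making sure the chain-rule term $\mc X|_q(\hat C A)$ is handled correctly — in particular that $\nu(\ol U(q))|_q(\hat C A) = 0$ since $\hat C A$ is a constant section along the fiber, so that $\mc X|_q(\hat C A) = \LNSD(0,\hat Z(q))|_q(\hat C A)$, a term which I would absorb into the $\nu$-component that does not interfere with the $\LNSD$-vanishing argument. The genuinely substantive input is the spanning statement for $\RDist^{(2)}$ under invertibility of $\wRol_q$, which is where all the hypotheses are used, and this is essentially a rerun of the argument already appearing in Proposition~\ref{pr:vertZ}; the rest is a careful but routine expansion of the bracket formula.
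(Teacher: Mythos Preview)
Your bracket computation does not yield the claimed identity; there is a genuine gap in the plan.

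When you apply the bracket formula to $\mc X=S_{(\hat Z,\ol U)}$ (so $\ol T=(0,\hat Z)$, vertical part $\ol U$) and $\mc Y=\nu(\hat CA)$ (so $\ol S=0$, vertical part $\hat CA$), the $\LNSD$-component of $[\mc X,\mc Y]|_q$ is exactly
\[
\LNSD\big(0,\,-\nu(\hat CA)|_q\hat Z\big)\big|_q,
\]
and nothing else. The symmetry constraint $[\mc X,\mc Y]\in\RDist^{(2)}|_O=\RDist|_O\oplus V(\pi_Q)|_O$ therefore forces only
\[
\nu(\hat CA)|_q\hat Z=0,
\]
which is the auxiliary fact (this is indeed a consequence of Proposition~\ref{pr:vertZ} with $Z=0$, together with invertibility of $\wRol_q$). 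The quantity you are after, $\nu(\hat CA)|_q\hat U$, sits inside the $\nu$-component of the bracket (via $\nu(\hat CA)|_q\ol U=(\nu(\hat CA)|_q\hat U)A+\hat U(q)\hat CA$), and that component is \emph{unconstrained} by the symmetry, precisely because all vertical directions already lie in $\RDist^{(2)}|_O$. So there is no way to extract the main identity from the vanishing of the ``bad'' $\LNSD$ piece of $[S,\nu(\hat CA)]$; your sentence ``after this bookkeeping the vanishing of the bad $\LNSD$ component should read precisely \dots'' is where the argument breaks. (Relatedly, $\nu(\hat CA)|_q$ is \emph{not} $\LNSD(0,\hat C A(\cdot))$: the former is $\pi_Q$-vertical, the latter is $\ol\nabla$-horizontal; your attempted expansion of $\nu(\hat CA)|_q\hat Z$ as an $\LNSD$-derivative of $\hat Z$ is not valid.)

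What the paper actually does is different: it differentiates the already-known symmetry equation $\ol U(q)W=\LRD(W)|_q\hat Z$ (Eq.~\eqref{eq:sym0:1}) in the vertical direction $\nu(\hat CA)|_q$. Concretely,
\[
(\nu(\hat CA)|_q\ol U)W
=\nu(\hat CA)|_q(\LRD(W)\hat Z)
=\LRD(W)|_q\big(\nu(\hat CA)\hat Z\big)+[\nu(\hat CA),\LRD(W)]|_q\hat Z,
\]
the first term vanishes by the auxiliary fact above, and the \emph{relevant} bracket is $[\nu(\hat CA),\LRD(W)]$, not $[S,\nu(\hat CA)]$. Computing that bracket and unpacking $\ol U=\hat UA$ gives the first identity; your derivation of the second (equivalent) identity from the first is fine.
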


\begin{proof}
Notice that by Proposition \ref{pr:vertZ}, $\nu(\Rol_q(\xi))|_q \hat{Z}=0$
for all $\xi\in \bigwedge^2 TM$.
By assumption, $\wRol_q$ is invertible
and hence, for any $\hat{C}\in\so(T|_{\hat{x}} \hat{M})$,
there exists a $\xi$ such that $\wRol_q(\xi)=A^{-1}\hat{C}A$, i.e., $\hat{C}A=\Rol_q(\xi)$
and hence $\nu(\hat{C}A)|_q\hat{Z}=0$ by Proposition \ref{pr:vertZ}.
Recall that the vectors $\nu(\hat{C}A)|_q$, with $\hat{C}\in\so(T|_{\hat{x}}\hat{M})$,
span $V|_q(\pi_Q)$.
 
For all $X,Y,W\in\VF(M)$ one has
\[
(\nu(\Rol_q(X\wedge Y))|_q \ol{U})W&=\nu(\Rol_q(X\wedge Y))|_q (\ol{U}W)
=\nu(\Rol_q(X\wedge Y))|_q(\LRD(W)\hat{Z}) \\
&=\LRD(W)|_q\big(\underbrace{\nu(\Rol(X\wedge Y))\hat{Z}}_{=0}\big)+[\nu(\Rol(X\wedge Y)),\LRD(W)]|_q \hat{Z} \\
&=\LNSD(\Rol_q(X\wedge Y)W)|_q\hat{Z}-\underbrace{\nu\big(\LRD(W)|_q\big(\Rol(X\wedge Y)\big)\big)\big|_q \hat{Z}}_{=0}
\]
Appealing again to the invertibility of $\wRol_q$,
this implies that for any $\hat{C}\in \so(T|_{\hat{x}}\hat{M})$, we have
\[
(\nu(\hat{C}A)|_q \ol{U})W=\LNSD(\hat{C}AW)|_q\hat{Z}.
\]
Finally, since $\ol{U}(q)=\hat{U}(q)A$, we have
\[
\nu(\hat{C}A)|_q \ol{U}=\hat{U}(q)\hat{C}A+(\nu(\hat{C}A)|_q\hat{U})A,
\]
and so
\[
\hat{U}(q)\hat{C}AW+(\nu(\hat{C}A)|_q\hat{U})AW=\LNSD(\hat{C}AW)|_q\hat{Z}.
\]
Since $W$ was arbitrary, we may replace $AW$ by an arbitrary vector $\hat{Y}\in T|_{\hat{x}}\hat{M}$.

This implies that, for all $W\in T|_x M$, $C\in\so(T|_x M)$
(notice that $A\so(T|_x M)=\so(T|_{\hat{x}}\hat{M})A$)
\[
\big(\nu(AC)|_q\hat{U}\big)AW=&\LNSD(ACW)|_q\hat{Z}-\hat{U}(q)ACW \\
&=\LRD(CW)|_q\hat{Z}-\LNSD(CW,0)|_q\hat{Z}-\hat{U}(q)ACW \\
&=\hat{U}(q)ACW-\LNSD(CW,0)|_q\hat{Z}-\hat{U}(q)ACW \\
&=-\LNSD(CW,0)|_q\hat{Z}.
\]
This completes the proof.
\end{proof}

\begin{proposition}\label{pr:sym0alg}
Assume that $\wRol_q$ is invertible at every $q\in O$.
Then for all $q=(x,\hat{x};A)\in O$, $C\in \so(T|_x M)$ and $X,W\in T|_x M$ one has
\[
(\LNSD(CW,0)|_q\hat{U})AX+(\LNSD(CX,0)|_q\hat{U})AW=0.
\]
\end{proposition}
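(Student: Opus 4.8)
The plan is to differentiate the identity of Proposition~\ref{pr:nuU_LZ} one further time, in a rolling-lift direction, and then to close the computation with the $\Sym_0$-equations \eqref{eq:sym0:1}--\eqref{eq:sym0:2}. First I would fix $q=(x,\hat x;A)\in O$ and extend $C$ to a local section of $\so(TM)$ and $X,W$ to local vector fields on $M$, all chosen parallel at $x$, so that $(\nabla C)|_x=(\nabla X)|_x=(\nabla W)|_x=0$ and hence also $(\nabla(CW))|_x=0$. With these extensions both sides of the identity $(\nu(AC)|_q\hat U)AW=-\LNSD(CW,0)|_q\hat Z$ of Proposition~\ref{pr:nuU_LZ} become $T\hM$-valued functions on $O$ (covering $\pi_{O,\hM}$), and I would apply $\LRD(X)|_q$ to this identity.

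Evaluating the two resulting derivatives forces one to commute $\LRD(X)|_q$ past the vector fields $\LNSD(CW,0)$ and $\nu(AC)$, which I would do with the bracket formula recalled in Section~\ref{sec:not}; in the process essentially every correction term drops, for three reasons. First, the $\hat\nabla$-curvature terms produced by the commutations vanish because $(\pi_{Q,\hM})_*$ annihilates both the no-spin lifts $\LNSD(\,\cdot\,,0)$ and the vertical fields $\nu(\,\cdot\,)$. Second, the ``$\LNSD$-parts'' of the brackets $[\LRD(X),\LNSD(CW,0)]$ and $[\LRD(X),\nu(AC)]$ involve only $\nabla_X(CW)$, $\nabla_{CW}X$, $\nabla_XC$ and similar expressions, which vanish at $x$ by the choice of extensions; consequently $[\LRD(X),\LNSD(CW,0)]|_q$ is purely $\pi_Q$-vertical, while $[\LRD(X),\nu(AC)]|_q=-\LNSD(0,ACX)|_q$. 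Third, every $\pi_Q$-vertical vector that ends up acting on $\hat Z$ annihilates it: such a vector has the form $\nu(\hat DA)|_q$ with $\hat D\in\so(T|_{\hat x}\hM)$, and by invertibility of $\wRol_q$ this equals $\nu(\Rol_q(\xi))|_q$ for a suitable $\xi\in\bigwedge^2T|_xM$, which kills $\hat Z$ by Proposition~\ref{pr:vertZ} (the same reduction already used in the proof of Proposition~\ref{pr:nuU_LZ}). Feeding in, moreover, \eqref{eq:sym0:1} in the form $\LRD(X)|_{q'}\hat Z=\hat U(q')A'X$ and \eqref{eq:sym0:2} in the form $\LRD(X)|_{q'}\hat U=\hat R(A'X\wedge\hat Z(q'))$ (legitimate since $\LRD(X)|_{q'}A'=0$ along rolling curves), the differentiated identity should collapse to
\[
\big(\LNSD(CW,0)|_q\hat U\big)AX=-\hat R\big(ACX\wedge\hat Z(q)\big)AW+\big(\LNSD(0,ACX)|_q\hat U\big)AW.
\]

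The final step is then purely algebraic: from $\LRD(CX)|_q=\LNSD(CX,0)|_q+\LNSD(0,ACX)|_q$ together with \eqref{eq:sym0:2} applied to the vector $CX\in T|_xM$, i.e. $\LRD(CX)|_q\hat U=\hat R(ACX\wedge\hat Z(q))$, one obtains $\LNSD(0,ACX)|_q\hat U=\hat R(ACX\wedge\hat Z(q))-\LNSD(CX,0)|_q\hat U$; substituting into the displayed identity, the two $\hat R$-terms cancel and one is left with $\big(\LNSD(CW,0)|_q\hat U\big)AX=-\big(\LNSD(CX,0)|_q\hat U\big)AW$, which is the assertion.

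The bulk of the work, and the main place to be careful, is the middle step -- applying the bracket formula to the pairs $\{\LRD(X),\LNSD(CW,0)\}$ and $\{\LRD(X),\nu(AC)\}$ and to the two ``second-derivative'' reorderings these generate, and verifying each of the three cancellations above. The one conceptual subtlety worth noting is that it is the displayed identity itself, and not its symmetrization in $X\leftrightarrow W$, that must be fed into \eqref{eq:sym0:2}: symmetrizing first and invoking \eqref{eq:sym0:2} only afterwards merely reproduces a tautology.
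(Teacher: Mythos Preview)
Your proposal is correct and follows essentially the same route as the paper: both arguments differentiate the identity of Proposition~\ref{pr:nuU_LZ} in a rolling direction, commute $\LRD(X)$ past $\LNSD(CW,0)$ and $\nu((\cdot)C)$, and arrive at the same intermediate identity $(\LNSD(CW,0)|_q\hat U)AX=(\LNSD(0,ACX)|_q\hat U)AW-\hat R(ACX\wedge\hat Z(q))AW$ before finishing with \eqref{eq:sym0:2}. The only cosmetic difference is that you choose $C,X,W$ parallel at $x$ so the first-order $\nabla$-terms drop immediately, whereas the paper keeps arbitrary extensions and tracks those terms (e.g.\ $\nabla_XY$, $\nabla_YX$) explicitly until they cancel.
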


\begin{proof}
Let $C\in\Gamma(\pi_{\so(TM)})$ and $X,W\in\VF(M)$
and define $Y\in\VF(M)$ by $Y|_x:=C|_x W|_x$ for all $x\in M$.
Then
\[
\LNSD(Y,0)|_q(\ol{U}(\cdot)X)
&=\LNSD(Y,0)|_q(\LRD(X)\hat{Z}) \\
&=[\LNSD(Y,0),\LRD(X)]|_q\hat{Z}+\LRD(X)|_q(\LNSD(Y,0)\hat{Z}) \\
&=\LRD(\nabla_Y X)|_q\hat{Z}-\LNSD(\nabla_X Y,0)|_q\hat{Z}+\nu(AR(Y,X))|_q\hat{Z} \\
&+R^{\ol{\nabla}}((Y,0),(X,AX))\hat{Z}(q)
+\LRD(X)|_q(\LNSD(Y,0)\hat{Z}).
\]
By the first part of the proof of the previous proposition, $\nu(AR(Y,X))|_q\hat{Z}=0$.
It is also clear that $R^{\ol{\nabla}}((Y,0),(X,AX))\hat{Z}(q)=0$.
Hence
\[
\LNSD(Y,0)|_q(\ol{U}(\cdot)X)=\hat{U}(q)A\nabla_Y X-\LNSD(\nabla_X Y,0)|_q\hat{Z}+\LRD(X)|_q(\LNSD(Y,0)\hat{Z}).
\]

One has by Proposition \ref{pr:nuU_LZ},
\begin{align*}
\LRD(X)|_q(\LNSD(Y,0)\hat{Z})&
=\LRD(X)|_q(\LNSD(CW,0)\hat{Z})
=-\LRD(X)|_q\big((\nu((\cdot)C)\hat{U})(\cdot)W\big) \\
&=-(\nu(AC)|_q\hat{U})A\nabla_X W
-\big([\LRD(X),\nu((\cdot)C)]|_q\hat{U}\big)AW\\
&-\big(\nu(AC)|_q(\LRD(X)\hat{U})\big)AW \\
&=\LNSD(C\nabla_X W,0)|_q\hat{Z}-(\nu(A\nabla_X C)|_q\hat{U})AW\\
&+(\LNSD(0,ACX)|_q\hat{U})AW
-\big(\nu(AC)|_q\big(\hat{R}((\cdot)X\wedge \hat{Z})\big)\big) AW\\
&=\LNSD(C\nabla_X W+(\nabla_X C)W,0)|_q\hat{Z}+(\LNSD(0,ACX)|_q\hat{U})AW \\
&-\hat{R}(ACX\wedge\hat{Z}(q))AW-\hat{R}(AX\wedge \nu(AC)|_q\hat{Z})AW.
\end{align*}
Since $C\nabla_X W+(\nabla_X C)W=\nabla_X Y$ and since $ \nu(AC)|_q\hat{Z}=0$, this simplifies to
\[
\LRD(X)|_q(\LNSD(Y,0)\hat{Z})&=\LNSD(\nabla_X Y,0)|_q\hat{Z}\\
&+(\LNSD(0,ACX)|_q\hat{U})AW-\hat{R}(ACX\wedge\hat{Z}(q))AW.
\]
Since, on the other hand,
\[
\LNSD(Y,0)|_q(\ol{U}(\cdot)X)=(\LNSD(Y,0)|_q\hat{U})AX+\hat{U}(q)A\nabla_Y X,
\]
we have arrived at
\[
(\LNSD(Y,0)|_q\hat{U})AX+\hat{U}(q)A\nabla_Y X
&=\LNSD(Y,0)|_q(\ol{U}(\cdot)X) \\
&=\hat{U}(q)A\nabla_Y X-\LNSD(\nabla_X Y,0)|_q\hat{Z}
+\LNSD(\nabla_X Y,0)|_q\hat{Z}\\
&+(\LNSD(0,ACX)|_q\hat{U})AW-\hat{R}(ACX\wedge\hat{Z}(q)),
\]
that is
\[
(\LNSD(Y,0)|_q\hat{U})AX=(\LNSD(0,ACX)|_q\hat{U})AW-\hat{R}(ACX\wedge\hat{Z}(q))AW.
\]
Finally, using
\[
(\LNSD(0,ACX)|_q\hat{U})AW
&=(\LRD(CX)|_q\hat{U})AW-(\LNSD(CX,0)|_q\hat{U})AW \\
&=\hat{R}(ACX\wedge \hat{Z}(q))AW-(\LNSD(CX,0)|_q\hat{U})AW,
\]
and recalling that $Y=CW$, we obtain
\begin{equation*}
(\LNSD(CW,0)|_q\hat{U})AX=-(\LNSD(CX,0)|_q\hat{U})AW.\qedhere
\end{equation*}
\end{proof}

%


\begin{corollary}\label{cor:sym0alg}
Assuming that $\wRol$ is invertible on $O$,
then for all $q=(x,\hat{x};A)\in O$ and $X\in T|_x M$,
one has
\[
\LNSD(X,0)|_q\hat{U}=0.
\]
\end{corollary}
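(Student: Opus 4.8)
The plan is to exploit the algebraic identity from Proposition~\ref{pr:sym0alg} together with a dimension-counting (or symmetrization) argument of the type used classically to prove that a ``curvature-like'' tensor which is symmetric in too many slots must vanish. Concretely, fix $q=(x,\hat x;A)\in O$ and define a trilinear map $\Phi\colon\so(T|_xM)\times T|_xM\times T|_xM\to T|_{\hat x}\hat M$ by $\Phi(C,W,X):=\big(\LNSD(CW,0)|_q\hat U\big)AX$. Proposition~\ref{pr:sym0alg} says exactly that $\Phi$ is antisymmetric in its last two arguments: $\Phi(C,W,X)=-\Phi(C,X,W)$. On the other hand, I expect that the quantity $\big(\LNSD(CW,0)|_q\hat U\big)AX$ depends on $C$ and $W$ only through the vector $CW\in T|_xM$ — indeed $\LNSD(Y,0)|_q$ is defined purely in terms of $Y=CW$, and $\hat U$ is a fixed section, so $\Phi(C,W,X)$ factors as $\Psi(CW,X)$ for a well-defined bilinear map $\Psi\colon T|_xM\times T|_xM\to T|_{\hat x}\hat M$. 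Since at any $x$ with $n\geq 2$ every vector $Y\in T|_xM$ can be written as $CW$ for suitable $C\in\so(T|_xM)$ and $W\in T|_xM$ (e.g. pick $W\perp Y$ and $C=\tfrac{1}{\n{W}^2}\,Y\wedge W$, which gives $CW=Y$), the identity of Proposition~\ref{pr:sym0alg} becomes $\Psi(Y,X)=-\Psi(Y',X')$ whenever $CW=Y$, $CX=Y'$; the cleanest consequence is obtained by taking the {\em same} $C$ and swapping the roles, yielding $\Psi(CW,X)=-\Psi(CX,W)$ for all $C,W,X$.

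From here the argument is the standard ``full symmetrization kills it'' trick. First I would record the two elementary symmetries of $\Psi$ that we have: antisymmetry $\Psi(CW,X)=-\Psi(CX,W)$ in the pair $(W,X)$ for each fixed $C$, and the fact (just noted) that the left slot only sees $CW$. Choosing $C$ appropriately I would derive that $\Psi(Y,X)=-\Psi(Y,X)$ directly: pick $W,X$ and $C$ with $CW=CX$ (possible as soon as $n\ge 2$ by taking $W=X$ but that is trivial, so instead take $C$ with a two-dimensional kernel complement so that $CW$ and $CX$ can be prescribed independently), or more simply combine the relation with its version obtained after replacing $C$ by a second skew endomorphism to run a Koszul-type permutation cycle of length three. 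The three cyclic instances of $\Psi(CW,X)=-\Psi(CX,W)$, suitably chosen, add up to $2\Psi(Y,X)=0$, giving $\Psi\equiv 0$, i.e. $\LNSD(X,0)|_q\hat U=0$ for all $X$. Since $q\in O$ was arbitrary this is the claim.

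The step I expect to be the genuine obstacle is justifying rigorously that $\big(\LNSD(CW,0)|_q\hat U\big)AX$ really depends on $C$ and $W$ only through $CW$ — that is, that we may legitimately pass from the ``trilinear in $(C,W,X)$'' statement of Proposition~\ref{pr:sym0alg} to a ``bilinear in $(Y,X)$'' statement. The subtlety is that $\LNSD(Y,0)|_q$ is a tangent vector at $q$ computed along an extension of $Y$ to a vector field on $M$, and $\hat U$ is a section over (an open set of) $Q$; a priori the derivative $\LNSD(Y,0)|_q\hat U$ might involve more than $Y|_x$. However, $\LNSD(Y,0)|_q\hat U=\ol\nabla_{(Y,0)}(\hat U(q(\cdot)))$ depends only on the value $(Y|_x,0)$ of the base vector field at $x$ (the Levi-Civita--induced connection $\ol\nabla$ is tensorial in the direction argument), so the dependence is indeed only on $Y|_x=C|_xW|_x$, and the reduction is valid. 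Once this point is settled, I would choose, for given $Y,X\in T|_xM$ with say $Y,X$ linearly independent, a single $C\in\so(T|_xM)$ together with $W_1,W_2$ such that $CW_1=Y$ and $CW_2=X$ (take $C$ a rotation in the plane $\spn\{Y,X\}^\perp\oplus\spn\{Y,X\}$ arranged so both preimages lie in that plane, using $n\ge 2$); then Proposition~\ref{pr:sym0alg} gives $\Psi(Y,W_2)=-\Psi(X,W_1)$ and, applying it again with the roles permuted, a closed cycle forcing $\Psi(Y,X)=0$. The degenerate case $Y\parallel X$ (in particular $X=0$, or $n=1$, which is vacuous) follows by continuity/linearity, completing the proof of Corollary~\ref{cor:sym0alg}.
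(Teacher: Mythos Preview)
Your reduction to a bilinear map $\Psi(Y,X)=(\LNSD(Y,0)|_q\hat U)AX$ is correct and is also what the paper does implicitly: the operator $\LNSD(Y,0)|_q$ is tensorial in $Y$, so there is no difficulty there. The identity from Proposition~\ref{pr:sym0alg} indeed becomes $\Psi(CW,X)+\Psi(CX,W)=0$ for all $C\in\so(T|_xM)$ and $W,X\in T|_xM$. Setting $W=X$ gives $\Psi(CX,X)=0$, and since $\{CX:C\in\so(T|_xM)\}=X^\perp$ this yields $\Psi(Y,X)=0$ whenever $Y\perp X$. Up to here everything is fine, and this is exactly the paper's first step.

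The gap is in the next step. You assert that a ``Koszul-type permutation cycle of length three'' built solely from the relation $\Psi(CW,X)=-\Psi(CX,W)$ forces $\Psi\equiv 0$. This is false: the bilinear map $\Psi(Y,X)=g(Y,X)\,\hat v$ (for any fixed $\hat v\in T|_{\hat x}\hat M$) satisfies $\Psi(CW,X)+\Psi(CX,W)=\big(g(CW,X)+g(CX,W)\big)\hat v=0$ because $C$ is skew, yet $\Psi\neq 0$. So no amount of cycling through that single identity will kill the diagonal part $\Psi(X,X)$. What you are missing is the additional information that $\hat U$ takes values in $\so(T|_{\hat x}\hat M)$, hence so does $\LNSD(X,0)|_q\hat U$; equivalently, $\hat g(\Psi(X,W_1),AW_2)$ is antisymmetric in $W_1,W_2$. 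This, combined with the vanishing on orthogonal pairs already obtained, gives $\hat g(\Psi(X,X),AX)=0$ and $\hat g(\Psi(X,X),AY)=-\hat g(AX,\Psi(X,Y))=0$ for $Y\perp X$, so $\Psi(X,X)=0$. That is precisely how the paper closes the argument.
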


\begin{proof}
We may well assume that $X$ has unit length.
Let $Y\in T|_x M$ be a unit vector such that $Y\perp X$.
By the previous proposition, for any $W\in T|_x M$ and $C\in\so(T|_x M)$, we have
\[
(\LNSD(CW,0)|_q\hat{U})AW=0.
\]
Therefore, choosing any $C\in\so(T|_x M)$ such that $CY=X$
(one can take e.g. $C=X\wedge Y$),
we have
\begin{align}\label{eq:LXY}
0=(\LNSD(CY,0)|_q\hat{U})AY=(\LNSD(X,0)|_q\hat{U})AY.
\end{align}

Since $\LNSD(X,0)|_q\hat{U}\in\so(T|_{\hat{x}} \hat{M)}$,
\[
\hat{g}((\LNSD(X,0)|_q\hat{U})AX,AX)=0.
\]
On the other hand, since $\LNSD(X,0)|_q\hat{U}\in\so(T|_{\hat{x}} \hat{M)}$ again and by Eq. \eqref{eq:LXY} ,
\[
\hat{g}((\LNSD(X,0)|_q\hat{U})AX,AY)=-\hat{g}(AX,\underbrace{(\LNSD(X,0)|_q\hat{U})AY}_{=0})=0.
\]
The above shows that
\begin{align}\label{eq:LXX}
(\LNSD(X,0)|_q\hat{U})AX=0.
\end{align}

Finally, if $\hat{V}\in T|_{\hat{x}}\hat{M}$, it can be written as
$\hat{V}=\alpha AX+\beta AY$ where $\n{Y}_g=1$, $X\perp Y$
and therefore Eqs. \eqref{eq:LXY}, \eqref{eq:LXX} imply that
\begin{equation*}
(\LNSD(X,0)|_q\hat{U})\hat{V}
=\alpha(\LNSD(X,0)|_q\hat{U})AX+\beta (\LNSD(X,0)|_q\hat{U})AY=0.\qedhere
\end{equation*}
\end{proof}

After all these preparatory propositions and lemmas, we can state the main result of this section. 

\begin{theorem}\label{th:sym0}
If there is an open dense set $O\subset Q$
such that $R|_x:\bigwedge^2 T|_x M\to \bigwedge^2 T|_x M$ is invertible on $\pi_{Q,M}(O)$
and $\wRol$ is invertible on $O$,
then, up to an isomorphism of Lie-algebras,
\[
\Sym_0(\RDist)=\mathrm{Iso}(\hat{M},\hat{g})
\]
and therefore all the elements of $\Sym_0(\RDist)$ are induced by Killing fields of $(\hat{M},\hat{g})$.

In particular, under the above assumptions, if
there is a principal bundle structure on $\pi_{Q,M}:Q\to M$
that renders $\RDist$ to a principal bundle connection,
then $(\hat{M},\hat{g})$ is a space of constant curvature.

\end{theorem}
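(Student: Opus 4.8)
The plan is to establish the identification $\Sym_0(\RDist)\cong\mathrm{Iso}(\hat M,\hat g)$ by constructing maps in both directions and showing they are mutually inverse, and then to derive the constant-curvature consequence. The reverse map is already given by Proposition~\ref{pr:sym0lowbound}: every Killing field $\hat K$ on $\hat M$ yields the symmetry $S_{(\hat Z,\ol U)}$ with $\hat Z(q)=\hat K|_{\hat x}$ and $\ol U(q)=\hat\nabla\hat K|_{\hat x}A$. So the real work is to show that \emph{every} element of $\Sym_0(\RDist)$ arises this way, i.e. that for any $S_{(\hat Z,\ol U)}\in\Sym_0(\RDist)$ the map $\hat Z$ depends only on the base point $\hat x\in\hat M$ and not on the full $q=(x,\hat x;A)$; once that is known, Lemma~\ref{le:sym0iso} immediately identifies the resulting vector field $\hat K$ on $\hat M$ as a Killing field and pins down $\ol U$.

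First I would fix $S_{(\hat Z,\ol U)}\in\Sym_0(\RDist)$ and write $\ol U(q)=\hat U(q)A=AU(q)$ as in the text. The key is to show $\hat Z$ and $\hat U$ are constant along the fibers of $\pi_{Q,\hat M}$, equivalently that their vertical derivatives vanish. The vertical space $V|_q(\pi_Q)$ decomposes via $\pi_{Q,M}$ and $\pi_{Q,\hat M}$, and is spanned by vectors $\nu(\hat C A)|_q$ with $\hat C\in\so(T|_{\hat x}\hat M)$; I need to split these into the part moving $x$ and the part moving $\hat x$. Using Corollary~\ref{cor:sym0alg} I already know $\LNSD(X,0)|_q\hat U=0$ for all $X\in T|_xM$ on the dense open set $O$, and Proposition~\ref{pr:nuU_LZ} combined with this gives control on $\nu(AC)|_q\hat U$ in terms of $\LNSD(CW,0)|_q\hat Z$, which I expect to vanish for analogous reasons (applying Proposition~\ref{pr:vertZ} together with the invertibility of $R|_x$ on $\pi_{Q,M}(O)$ to show $\LNSD(X,0)|_q\hat Z=0$ as well). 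Assembling these facts should show that on $O$ both $\hat Z$ and $\hat U$ are annihilated by the full vertical tangent space of $\pi_{Q,\hat M}$, hence descend to $\hat M$; density of $O$ and smoothness then extend this to all of $Q$. This is the step I expect to be the main obstacle: carefully bookkeeping the vertical directions, separating the $M$-fiber and $\hat M$-fiber contributions, and invoking the two separate invertibility hypotheses ($R|_x$ on the base, $\wRol$ on $Q$) in the right places.

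With $\hat Z(q)=\hat K|_{\hat x}$ established for some $\hat K\in\VF(\hat M)$, Lemma~\ref{le:sym0iso} gives that $\hat K$ is Killing and $\ol U(q)=\hat\nabla\hat K|_{\hat x}A$. Conversely Proposition~\ref{pr:sym0lowbound} sends $\hat K$ back to exactly this $S_{(\hat Z,\ol U)}$, so the two assignments are mutually inverse bijections. To upgrade this to a Lie-algebra isomorphism I would check that the bracket of two such symmetries $S_{(\hat K_1)},S_{(\hat K_2)}$ in $\VF(Q)$ corresponds to $S$ associated to $[\hat K_1,\hat K_2]$ — this is a direct computation using the bracket formula in the Proposition of Section~\ref{sec:not} (or Proposition~\ref{pr:symeq}), where the $\nu$-terms involving the curvatures combine into the Killing-field identity $\hat\nabla_{\hat X}(\hat\nabla\hat K)=\hat R(\hat X\wedge\hat K)$ used in Proposition~\ref{pr:sym0lowbound}.

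Finally, for the principal-bundle consequence: if $\pi_{Q,M}\colon Q\to M$ carries a principal bundle structure with structure group $G$ making $\RDist$ a principal connection, then the fundamental vector fields of the $G$-action are vertical for $\pi_{Q,M}$ and preserve the connection distribution $\RDist$, hence lie in $\Sym_0(\RDist)$; moreover they form a Lie algebra of dimension $\dim G=\dim Q-\dim M=\binom{n}{2}$ (since the fibers of $\pi_{Q,M}$ are the orthonormal-frame-type fibers, isomorphic to $\SO(n)$) acting transitively on each fiber. By the isomorphism just proved, $\mathrm{Iso}(\hat M,\hat g)$ then has dimension at least $\binom n2$, and more to the point the corresponding Killing fields $\hat K$ must span $T|_{\hat x}\hat M$ at every point $\hat x$ (transitivity of the $G$-action on fibers translates, through $\hat Z(q)=\hat K|_{\hat x}$ and the surjectivity of $q\mapsto\hat x$, into the infinitesimal isometries acting transitively on $\hat M$) and realize every skew-symmetric $\hat\nabla\hat K|_{\hat x}\in\so(T|_{\hat x}\hat M)$. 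A Riemannian manifold whose isometry group has the maximal dimension $\binom{n+1}{2}$ — equivalently, admits isometries with arbitrarily prescribed $1$-jet at a point — is a space of constant curvature, by the classical theorem on maximally symmetric spaces found in~\cite{kobayashi63}. This yields the stated conclusion.
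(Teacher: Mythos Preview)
Your overall architecture is correct, but the central step has a genuine gap, and the principal-bundle paragraph contains a dimension error that would break the conclusion.

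\textbf{The gap in showing $\hat Z$ descends.} You correctly identify that the goal is $\LNSD(X,0)|_q\hat Z=0$ (together with $\nu(\hat CA)|_q\hat Z=0$, which you do get from Proposition~\ref{pr:vertZ} and invertibility of $\wRol$). But your proposed route to $\LNSD(X,0)|_q\hat Z=0$ does not work: Proposition~\ref{pr:vertZ} only controls \emph{vertical} derivatives of $\hat Z$, and invertibility of $R|_x$ cannot by itself convert that into information about the horizontal $\LNSD(X,0)$ direction. You have the flow through Proposition~\ref{pr:nuU_LZ} backwards: that proposition says $(\nu(AC)|_q\hat U)AW=-\LNSD(CW,0)|_q\hat Z$, so to deduce the vanishing of the right-hand side you must \emph{first} establish $\nu(AC)|_q\hat U=0$ for all $C\in\so(T|_xM)$, not the other way around. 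The paper's key move, which you are missing, is a commutator trick on $\hat U$: from Corollary~\ref{cor:sym0alg} one knows $\LNSD(X,0)|_q\hat U=0$ for every $X$, hence
\[
0=\LNSD(X,0)|_q\big(\LNSD(Y,0)\hat U\big)
=[\LNSD(X,0),\LNSD(Y,0)]|_q\hat U
=\nu(AR(X,Y))|_q\hat U.
\]
\emph{Now} invertibility of $R|_x$ enters, giving $\nu(AC)|_q\hat U=0$ for every $C$, and Proposition~\ref{pr:nuU_LZ} then yields $\LNSD(CW,0)|_q\hat Z=0$, hence $\LNSD(X,0)|_q\hat Z=0$ as desired. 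This is exactly where the hypothesis on $R|_x$ (as opposed to $\wRol_q$) is used, and your sketch places it in the wrong spot.

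\textbf{The fiber-dimension error.} In your last paragraph you write $\dim G=\dim Q-\dim M=\binom{n}{2}$ and describe the $\pi_{Q,M}$-fibers as ``isomorphic to $\SO(n)$''. Both are wrong: the fiber over $x\in M$ is $\{(\hat x;A):\hat x\in\hat M,\ A\in Q|_{(x,\hat x)}\}$, diffeomorphic to the oriented orthonormal frame bundle of $\hat M$, of dimension $n+\binom{n}{2}=\binom{n+1}{2}$. With $\dim G=\binom{n}{2}$ your argument would only produce $\dim\mathrm{Kil}(\hat M,\hat g)\geq\binom{n}{2}$, which does not force constant curvature. With the correct $\dim G=\binom{n+1}{2}$ the argument goes through directly by the maximal-dimension theorem in~\cite{kobayashi63}, and you do not need the extra transitivity discussion you sketch.
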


\begin{proof}
Let $S=S_{(\hat{Z},\hat{U})}\in \Sym_0(\RDist)$ be given.
For any $q\in O$ and any $X,Y\in\VF(M)$, we have by Corollary \ref{cor:sym0alg},
\[
0&=\LNSD(X,0)|_q(\LNSD(Y,0)\hat{U})
=\underbrace{\LNSD(Y,0)|_q(\LNSD(X,0)\hat{U})}_{=0}
+[\LNSD(X,0),\LNSD(Y,0)]|_q\hat{U} \\
&=\underbrace{\LNSD([X,Y],0)|_q\hat{U}}_{=0}+\nu(AR(X,Y))|_q\hat{U}+\underbrace{R^{\ol{\nabla}}((X,0),(Y,0))\hat{U}(q)}_{=0} \\
&=\nu(AR(X,Y))|_q\hat{U}.
\]

Given any $C\in \so(T|_x M)$. Since $R|_x$ is invertible,
there exists a $\xi\in\bigwedge^2 T|_x M$ such that $R(\xi)=C$
and therefore the above shows that
\[
\nu(AC)|_q\hat{U}=\nu(AR(\xi))|_q\hat{U}=0.
\]
Hence, Proposition \ref{pr:nuU_LZ} implies that for all $X\in T|_x M$,
\[
\LNSD(CX,0)|_q\hat{Z}=-(\nu(AC)|_q\hat{U})AX=0.
\]
Since $C,X$ were arbitrary, we have that 
\[
\LNSD(X,0)|_q\hat{Z}=0,\quad \forall q=(x,\hat{x};A)\in O,\ \forall X\in T|_x M.
\]

By the above and Proposition \ref{pr:vertZ},
we have that for all $q=(x,\hat{x};A)\in O$, $X\in T|_x M$, $\hat{U}\in \so(T|_{\hat{x}}\hat{M})$,
\begin{align}\label{eq:DZ0}
(\LNSD(X,0)|_q+\nu(\hat{U}A)|_q)\hat{Z}=0.
\end{align}
By density of $O$ in $Q$,
a continuity argument implies that this holds for all $q=(x,\hat{x};A)\in Q$, $X\in T|_x M$ and $\hat{U}\in \so(T|_{\hat{x}}\hat{M})$.

Given $\hat{x}\in \hat{M}$,
choose $q_1,q_2\in \pi_{Q,\hat{M}}^{-1}(\hat{x})$,
say $q_1=(x_1,\hat{x};A_1)$, $q_1=(x_2,\hat{x};A_2)$.
Choose a path $\Gamma$ in $\pi_{Q,\hat{M}}^{-1}(\hat{x})$
(recall that $\pi_{Q,\hat{M}}^{-1}(\hat{x})$ is connected
since it is diffeomorphic to the oriented orthonormal frame bundle of $M$
which is connected)
such that $\Gamma(0)=q_1$, $\Gamma(1)=q_2$.
Since $\Gamma(t)\in \pi_{Q,\hat{M}}^{-1}(\hat{x})$,
we have $\dot{\Gamma}(t)\in T\pi_{Q,\hat{M}}^{-1}(\hat{x})$ for all $t$
and so
\[
\dot{\Gamma}(t)=\LNSD(X(t),0)|_{\Gamma(t)}+\nu(\hat{U}(t)A)|_{\Gamma(t)},
\]
where $X(t)\in TM$, $\hat{U}(t)\in\so(T\hat{M})$.
Therefore, by Eq. \eqref{eq:DZ0},
\[
\dot{\Gamma}(t)\hat{Z}=0,\quad \forall t\in [0,1].
\]

We claim that $\hat{Z}|_{q_1}=\hat{Z}|_{q_2}$.
Indeed, given $\hat{f}\in\Cinf(\hat{M})$,
we can compute, by considering $\hat{Z}\hat{f}$ as the function $q'=(x',\hat{x}';A')\mapsto \hat{Z}(q')\hat{f}(x')=\hat{Z}(q')\pi_{Q,\hat{M}}^*\hat{f}(q')$ on $Q$,
(we write, for the sake of clarity, $\ctr$ for the contraction of tensors)
\[
&\dif{t}(\hat{Z}|_{\Gamma(t)}\hat{f})
=\dot{\Gamma}(t)(\hat{Z}\hat{f})
=\dot{\Gamma}(t)(\hat{Z}\pi_{Q,\hat{M}}^*\hat{f})
=\dot{\Gamma}(t)(\hat{Z}\ctr\diff\pi_{Q,\hat{M}}^*\hat{f})
=\dot{\Gamma}(t)(\hat{Z}\ctr\pi_{Q,\hat{M}}^*\diff\hat{f}) \\
=&(\dot{\Gamma}(t)\hat{Z})\ctr \pi_{Q,\hat{M}}^*\diff\hat{f}|_{\Gamma(t)}
+\hat{Z}|_{\Gamma(t)}\ctr \dot{\Gamma}(t)\pi_{Q,\hat{M}}^*\diff\hat{f}
\]
Now obviously $\dot{\Gamma}(t)\pi_{Q,\hat{M}}^*\diff\hat{f}=0$
and we also know that $\dot{\Gamma}(t)\hat{Z}=0$.
Therefore, $\dif{t}(\hat{Z}|_{\Gamma(t)}\hat{f})=0$ for all $t\in [0,1]$
and thus
$\hat{Z}|_{\Gamma(0)}\hat{f}=\hat{Z}|_{\Gamma(1)}\hat{f}$.
Since $\hat{f}\in\Cinf(\hat{M})$ was arbitrary,
and since $\Gamma(0)=q_1$, $\Gamma(1)=q_2$,
the claim follows. 

Hence we have shown that
for all $\hat{x}\in\hat{M}$ and all $q_1,q_1\in \pi_{Q,\hat{M}}^{-1}(\hat{x})$,
$\hat{Z}|_{q_1}=\hat{Z}|_{q_2}$.
Thus, defining
\[
\hat{Y}_S|_{\hat{x}}:=\{\hat{Z}|_q\ |\ q\in  \pi_{Q,\hat{M}}^{-1}(\hat{x})\},\quad \hat{x}\in\hat{M}
\]
we have shown that $\hat{Y}_S|_{\hat{x}}$ is a singleton set
for every $\hat{x}\in\hat{M}$
and therefore it defines a map $\hat{M}\to T\hat{M}$,
which we write as $\hat{Y}_S$ as well,
such that $\hat{Y}_S|_{\hat{x}}\in T|_{\hat{x}}\hat{M}$, $\forall \hat{x}\in\hat{M}$.
By using smooth local sections of $\pi_{Q,\hat{M}}$,
the smoothness of $\hat{Y}_S$ follows from that of $\hat{Z}$,
i.e., $\hat{Y}_S$ is a vector field on $\hat{M}$.

It follows from Lemma \ref{le:sym0iso} that $\hat{Y}_S$
is a Killing field on $(\hat{M},\hat{g})$.
It is clear that the map $S\mapsto \hat{Y}_S$
from $\Sym_0(\RDist)$ into $\mathrm{Kil}(\hat{M},\hat{g})$, the space of Killing fields of $(\hat{M},\hat{g})$,
is injective and therefore,
\[
\dim \Sym_0(\RDist)\leq \dim\mathrm{Iso}(\hat{M},\hat{g}).
\]
Proposition \ref{pr:sym0lowbound} provides the opposite inequality, thus
we have completed the first part of the proof.

To prove the last claim of the theorem,
suppose that $\mu:G\times Q\to Q$ is a principal bundle structure ($G$ is a Lie group)
on $\pi_{Q,M}$ such that $\mu_*\RDist=\RDist$.
Then for every $X\in\mathfrak{g}$,
the vector field defined by $S_X|_q:=(\mu^q)_*X$
belongs to $\Sym_0(\RDist)$
(notice that $(\pi_{Q,M})_*S_X|_q=(\pi_{Q,M}\circ \mu^q)_*X=0$,
since $(\pi_{Q,M}\circ\mu^q)(a)=\pi_{Q,M}(q)$ for all $a\in G$).
The fact that $\dim\pi_{Q,M}^{-1}(x)=\frac{n(n+1)}{2}$
for all $x\in M$ implies that $\dim G=\frac{n(n+1)}{2}$.
Given a basis $X_i$, $i=1,\dots,\frac{n(n+1)}{2}$, of $\mathfrak{g}$,
we have that $S_{X_i}\in\Sym_0(\RDist)$
are linearly independent
and hence so are the Killing fields $\hat{Y}_{S_{X_i}}$
of $(\hat{M},\hat{g})$.
This implies that $\dim\mathrm{Kil}(\hat{M},\hat{g})\geq \frac{n(n+1)}{2}$
and because $\frac{n(n+1)}{2}$ is the maximal dimension of $\mathrm{Kil}(\hat{M},\hat{g})$,
we have an equality.
But this implies, by a well known theorem in Riemannian geometry (see \cite{kobayashi63})
that $(\hat{M},\hat{g})$ must have constant curvature.
\qedhere

\end{proof}

\section{Constant Curvature and Flatness}\label{sec:flat}

The aim of this section is to show the remaining main result of this paper, which concerns the impossibility for the rolling distribution for spaces of constant sectional curvature rolling to be flat, when the dimensions are greater than or equal to three. Recall our assumption that $(M,g)$, $(\hat{M},\hat{g})$ are spaces of constant curvatures, $K$ and $\hat{K}$, respectively.

\subsection{Nilpotent Approximation of the rolling distribution $\RDist$}

Define $\kappa:=-K+\hat{K}\neq 0$.
Let $X,Y,Z\in\VF(M)$.
Then, after standard computations (see for instance \cite{CK})
\[
[\LRD(Y),\LRD(Z)]
&=\LRD([Y,Z])+\kappa \nu(A(Y\wedge Z)) \\
[\LRD(X),[\LRD(Y),\LRD(Z)]]
&=-\kappa\LNSD(A(Y\wedge Z)X)+\LRD([X,[Y,Z]]) \\
&+\nu(A(X\wedge [Y,Z])+\kappa §A\LRD(X)(Y\wedge Z)) \\
&=-\kappa g(Z,X)\LNSD(Y)+\kappa g(Y,X)\LNSD(Z)\qquad\mathrm{mod}(\LRD,\nu).
\]
Recall that $\nu(A(X_i\wedge X_j))$, with $1\leq i<j\leq n$ form a basis of the vertical fiber $\pi_Q^{-1}(x,\hat{x})$ at $q=(x,\hat{x};A)$, which is of dimension $\frac{n(n-1)}2$. Since $\kappa\neq 0$, one easily gets that the vertical part of the second order Lie brackets generate the full vertical fiber at every $q=(x,\hat{x},A)\in Q$ and with the third order Lie brackets, one gets the $n$ directions generating 
$T|_{\hat{x}}\hat{M}$. Therefore, the control system defined by $\RDist$ is completely controllable and equiregular, i.e. the growth vector of the distribution $\RDist$  at every point $q\in Q$ is equal to $(n,\frac{n(n+1)}2,2n+\frac{n(n-1)}2)$. One then gets that $\RDist$ defines a sub-Riemannian structure on $Q$ and the structure of (isometric) nilpotent approximations at every point $q\in Q$ are given next. 


\begin{proposition}\label{nilpo}
Set $m:=2n+\frac{n(n-1)}2$. For every point $q\in Q$, any nilpotent approximation of $\RDist$ at $q$ is given by an $n$-dimensional distribution $D$ in $\mathbb{R}^m$ admitting a global basis of vector fields $N_1,\cdots,N_n$ such that $\mathrm{Lie}(D)$, the Lie algebra generated by the $N_i'$s is a graded nilpotent Lie algebra of step $3$ so that
\begin{equation}\label{decomp}
\mathbb{R}^m=\mathrm{Lie}(D)=\mathfrak{n}_1\oplus\mathfrak{n}_2\oplus\mathfrak{n}_3,
\end{equation}
where $\mathfrak{n}_1=D$, $\mathfrak{n}_2=[D,D]$ with $\dim\mathfrak{n}_2=\frac{n(n-1)}2$
and $\mathfrak{n}_3$ is $n$-dimensional and admits a basis $Z_1,\cdots,Z_n$ such that, 
 for $1\leq i, j,k\leq n$, 
\[
[N_i,[N_j,N_k]]=-\delta_{ik}Z_j+\delta_{ij}Z_k.
\]
\end{proposition}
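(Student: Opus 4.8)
The plan is to construct the nilpotent approximation explicitly from the bracket relations already computed just above the statement, and then verify that the resulting Lie algebra is graded nilpotent of step $3$ with the asserted structure constants. Since the system is equiregular with growth vector $(n,\frac{n(n+1)}{2},2n+\frac{n(n-1)}{2})$ at every $q\in Q$, the general theory of nilpotent approximations (Bella\"\i che) guarantees that the approximation is a homogeneous structure on $\R^m$ whose Lie algebra is the graded algebra $\mathrm{gr}_q(\RDist)=\mathfrak{n}_1\oplus\mathfrak{n}_2\oplus\mathfrak{n}_3$ associated with the flag $\RDist\subset\RDist^{(2)}\subset\RDist^{(3)}=TQ$; the dimensions $\dim\mathfrak{n}_1=n$, $\dim\mathfrak{n}_2=\frac{n(n-1)}{2}$, $\dim\mathfrak{n}_3=n$ are forced by the growth vector. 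So the only real content is to identify the bracket relations surviving in the graded algebra.

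First I would fix an orthonormal frame $X_1,\dots,X_n$ of $\VF(M)$ near $x$ (e.g. a normal frame at a point, so that $\nabla_{X_i}X_j=0$ and $[X_i,X_j]=0$ at the reference point) and set $N_i:=\LRD(X_i)$, which span $\mathfrak{n}_1=\RDist$. From the first displayed bracket relation, $[\LRD(X_i),\LRD(X_j)]=\LRD([X_i,X_j])+\kappa\,\nu(A(X_i\wedge X_j))$, and since $[X_i,X_j]$ is lower order (it lies in $\mathfrak{n}_1$ after grading, in fact vanishes at the base point with a normal frame), in the graded algebra we get $[N_i,N_j]\equiv\kappa\,\nu(A(X_i\wedge X_j))$ modulo $\mathfrak{n}_1$; these span $\mathfrak{n}_2$, of dimension $\frac{n(n-1)}{2}$ as required (and rescaling the $N_i$ or the $\nu$-basis absorbs the constant $\kappa$). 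Then from the second displayed bracket relation, reduced mod $(\LRD,\nu)$, one reads $[\LRD(X_i),[\LRD(X_j),\LRD(X_k)]]\equiv -\kappa\,g(X_k,X_i)\LNSD(X_j)+\kappa\,g(X_j,X_i)\LNSD(X_k)=-\kappa\delta_{ik}\LNSD(X_j)+\kappa\delta_{ij}\LNSD(X_k)$, so defining $Z_j$ to be (a suitable rescaling of) the graded class of $\LNSD(X_j)$ in $\mathfrak{n}_3=\RDist^{(3)}/\RDist^{(2)}$, we obtain exactly $[N_i,[N_j,N_k]]=-\delta_{ik}Z_j+\delta_{ij}Z_k$. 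The $Z_j$ span an $n$-dimensional space because the third-order brackets generate the $\LNSD$-directions, as observed in the paragraph preceding the proposition.

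It then remains to check that this data actually defines a Lie algebra structure on $\R^m=\mathfrak{n}_1\oplus\mathfrak{n}_2\oplus\mathfrak{n}_3$ that is nilpotent of step $3$ and graded: brackets $[\mathfrak{n}_1,\mathfrak{n}_1]\subseteq\mathfrak{n}_2$, $[\mathfrak{n}_1,\mathfrak{n}_2]\subseteq\mathfrak{n}_3$, and all other brackets (in particular $[\mathfrak{n}_2,\mathfrak{n}_2]$, $[\mathfrak{n}_1,\mathfrak{n}_3]$, $[\mathfrak{n}_2,\mathfrak{n}_3]$, $[\mathfrak{n}_3,\mathfrak{n}_3]$) vanish, which is automatic from the grading once step $3$ is established. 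The Jacobi identity for the graded algebra is inherited from the Jacobi identity on $\VF(Q)$ — or, more cleanly, one simply observes that the structure constants $-\delta_{ik}Z_j+\delta_{ij}Z_k$ together with $[N_i,N_j]$ spanning $\mathfrak{n}_2$ define an abstract graded nilpotent Lie algebra (indeed, it is the algebra of the nilpotent approximation, which is a Lie algebra by construction), so one only needs to confirm the skew-symmetry $[N_i,[N_j,N_k]]=-[N_i,[N_k,N_j]]$, which holds since $-\delta_{ik}Z_j+\delta_{ij}Z_k$ is manifestly antisymmetric in $j\leftrightarrow k$, and the Jacobi identity among triples of $N$'s, which is a short finite computation with the Kronecker deltas. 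The main obstacle — more bookkeeping than conceptual — is making the passage to the graded algebra rigorous, i.e. tracking precisely which terms are "lower order" and hence killed in $\mathrm{gr}_q(\RDist)$, and fixing normalizations so that the constant $\kappa$ disappears from the final structure equations; invoking the equiregularity and the standard construction of the nilpotent approximation (as in \cite{agrachev04}) handles the existence and uniqueness-up-to-isomorphism, and the explicit brackets above pin down its isomorphism type.
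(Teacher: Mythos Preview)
Your proposal is correct and follows essentially the same route as the paper: invoke the general construction of the nilpotent approximation from \cite{bella} (which the equiregularity with growth vector $(n,\tfrac{n(n+1)}{2},m)$ makes available), and then read off the graded bracket relations from the two displayed computations preceding the proposition using an orthonormal frame. The paper's own proof is in fact much terser than yours---it simply cites \cite{bella} and identifies $\mathrm{Lie}(D)$ as the quotient $F_{n,3}/Z$ of the free step-$3$ Lie algebra on $n$ generators by the ideal encoding the relations $[N_i,[N_j,N_k]]=-\delta_{ik}Z_j+\delta_{ij}Z_k$---so your explicit verification of the structure constants and the Jacobi identity is a welcome expansion rather than a different argument.
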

\begin{proof}
The definition of a nilpotent approximation is given in \cite{bella}. By standard computations, cf. \cite{bella}, one gets that $\mathrm{Lie}(D)$ is actually isomorphic to $F_{n,3}/Z$, where $F_{n,3}$
is the free Lie algebra of step $3$ with $n$ generators $X_1,\cdots,X_n$  and $Z$ is the involutive Lie algebra spanned by $[N_i,[N_j,N_k]]+\delta_{ik}[X_i,[X_i,X_j]]-\delta_{ij}[X_i,[X_i,X_k]]$, with $1\leq i, j,k\leq n$.
\end{proof}
%

\begin{remark}
In particular, if $i\neq j,k$, $[N_j,[N_i,N_j]]=[N_k,[N_i,N_k]]=Z_i$ and we also deduce that if $i\neq j,k$, then $[N_i,[N_j,N_k]]=0$.
\end{remark}

\begin{example}
We build a realization of the above nilpotent approximation.
Set $\mathfrak{n}_1=\mathfrak{n}_3=\R^n$, $\mathfrak{n}_2=\so(n)$,
$\mathfrak{n}=\mathfrak{n}_1\oplus\mathfrak{n}_2\oplus\mathfrak{n}_3$.
Write $e_i$, $i=1,\dots,n$ for the canonical basis of $\R^n$
and define the following brackets by
\[
[(e_i,0,0),(e_j,0,0)]&:=(0,e_i\wedge e_j,0) \\
[(e_i,0,0),(0,e_j\wedge e_k,0)]&=-[(0,e_j\wedge e_k,0),(e_i,0,0)]:=(0,0,-(e_j\wedge e_k)e_i)\\
&=(0,0,-\delta_{ik}e_j+\delta_{ij}e_k) \\
[\mathfrak{n}_3,\mathfrak{n}]&:=0
\]
One easily checks that the Jacobi identity holds.
In this example, for $1\leq i\leq n$, one has $N_i=(e_i,0,0)$, $Z_i=(0,0,e_i)$.
\end{example}

\subsection{Non-Flatness of the Rolling Distribution}

The main theorem of this section can be stated as follows.

\begin{theorem}\label{th:flat}
If $n\geq 3$ and $K\neq\hat{K}$, then $\RDist$ is not flat.
\end{theorem}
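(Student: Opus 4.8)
The plan is to compare the nilpotent approximation of $\RDist$, as described in Proposition~\ref{nilpo}, with the actual local structure of $\RDist$ near a point $q\in Q$, and to show that these two sub-Riemannian structures cannot be locally equivalent when $n\geq 3$. First I would recall from the bracket computations preceding Proposition~\ref{nilpo} that for the genuine rolling distribution one has the \emph{full} bracket relation
\[
[\LRD(X),[\LRD(Y),\LRD(Z)]]=-\kappa g(Z,X)\LNSD(Y)+\kappa g(Y,X)\LNSD(Z)+\big(\text{terms in }\RDist\big)+\nu(\cdots),
\]
whereas in any flat structure the depth-$3$ brackets would have to reproduce \emph{only} the skew part $-\delta_{ik}Z_j+\delta_{ij}Z_k$ modulo $\mathfrak{n}_1\oplus\mathfrak{n}_2$, with \emph{no} correction coming from $\RDist^{(2)}=\mathfrak{n}_1\oplus\mathfrak{n}_2$ when evaluated in the graded quotient. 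The key point is that flatness means $\RDist$ is locally equivalent to $\mathrm{Lie}(D)$, hence there must exist a local frame $N_1,\dots,N_n$ of $\RDist$ whose iterated brackets satisfy the \emph{exact} graded identities of Proposition~\ref{nilpo} — in particular $[N_i,[N_j,N_k]]=0$ whenever $i\neq j,k$ (the Remark after Proposition~\ref{nilpo}) and $[N_j,[N_i,N_j]]=[N_k,[N_i,N_k]]$ for $i\neq j,k$, these being honest equalities in the nilpotent Lie algebra, not just equalities modulo lower strata.

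Next I would set up the obstruction concretely. Write an arbitrary local orthonormal-type frame of $\RDist$ as $N_a=\sum_b c_a^b(q)\LRD(E_b)$ for a local orthonormal frame $E_b$ of $TM$ and smooth functions $c_a^b$, and impose the flatness relations. The depth-$2$ brackets $[N_a,N_b]$ must, together with the $N_a$, span a distribution of the correct rank and their further brackets must generate the step-$3$ nilpotent structure; using Proposition~\ref{nilpo} (the identity $[N_i,[N_j,N_k]]=-\delta_{ik}Z_j+\delta_{ij}Z_k$) one extracts algebraic constraints on the curvature term $\kappa$ and on the transition functions $c_a^b$ and their derivatives. Because $\kappa=-K+\hat K\neq 0$ is a fixed nonzero constant while the Proposition forces a precise balance among the coefficients, I expect the constraints to become overdetermined in dimension $n\geq 3$: there one has enough independent triples $(i,j,k)$ with $i,j,k$ distinct that the vanishing relation $[N_i,[N_j,N_k]]=0$ cannot be met while simultaneously matching the nonzero relations, because the actual rolling brackets mix in the symmetric curvature contribution $R(Y,Z)$ which, for a space of constant curvature, is itself proportional to $Y\wedge Z$ and thus rigidly tied to $\kappa$. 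In $n=2$ there is essentially only one such triple and no distinct-index triple, which is precisely why the $1:3$ sphere phenomenon escapes the obstruction.

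The main obstacle I anticipate is bookkeeping: translating ``locally equivalent to the nilpotent approximation'' into usable pointwise identities requires being careful about what is an equality in the graded algebra versus an equality modulo a filtration step, and then choosing the frame cleverly so that the derivative terms (the $\LRD(X)|_q(\cdot)$ acting on the coefficient functions $c_a^b$) can be separated from the purely algebraic curvature terms. A clean way to do this is to evaluate everything at a single point $q$ and use that the nilpotent approximation is the \emph{graded} object associated to the filtration $\RDist\subset\RDist^{(2)}\subset\RDist^{(3)}$: flatness forces the structure functions of this graded algebra to be constant and to match those of Proposition~\ref{nilpo}, so it suffices to compute the structure functions of the associated graded of $\RDist$ from the bracket formulas above and check that, for $n\geq 3$, they necessarily fail the identity $[N_i,[N_j,N_k]]=0$ for distinct $i,j,k$ — or, more precisely, fail to simultaneously satisfy that identity together with $[N_j,[N_i,N_j]]=Z_i$ unless $\kappa=0$, contradicting $K\neq\hat K$. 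The dimension hypothesis $n\geq 3$ enters exactly at the step where one needs a third index distinct from $i$ and $j$.
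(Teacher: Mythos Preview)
Your overall strategy---assume flatness, produce a local frame $N_i=\LRD(W_i)$ of $\RDist$ satisfying the \emph{exact} nilpotent identities of Proposition~\ref{nilpo}, and derive a contradiction from those identities---is exactly the paper's. But two points in your plan need correction.

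First, the last paragraph contains a real conceptual slip. You propose to ``compute the structure functions of the associated graded of $\RDist$'' and check that they fail the nilpotent identities. They do not fail: Proposition~\ref{nilpo} says precisely that the associated graded \emph{does} satisfy those identities at every point. Flatness is the question of whether the \emph{filtered} object is locally diffeomorphic to its graded model, and this cannot be decided by looking at the graded structure constants alone; the obstruction lives in how the higher-order terms (the $\LRD$- and $\nu$-corrections you wrote as ``$\mathrm{mod}(\LRD,\nu)$'') interact with the exact identities once you insist they hold on the nose. So your ``clean way'' at the end would prove nothing.

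Second, you predict the contradiction will be ``$\kappa=0$''. The paper's route is more delicate and the contradiction is different. From the exact identities one first shows the $W_i$ are pairwise $g$-orthogonal of common squared norm $\beta$ (using $[N_j,[N_i,N_j]]=[N_k,[N_i,N_k]]$), then that all structure functions $\alpha^k_{ij}$ vanish (using $[[N_i,N_j],[N_k,N_l]]=0$, here $n\ge 3$ is essential), then that $\LRD(W_k)(W_i\wedge W_j)=0$ for $k\neq i,j$ (using $[N_k,[N_i,N_j]]=0$), then that $\beta$ is constant and in fact $\LRD(W_i)W_j=0$ for all $i,j$. Only after this does one compute the \emph{vertical} derivatives $\nu(A(W_i\wedge W_j))W_k=\tfrac{\beta K}{\kappa}(\delta_{jk}W_i-\delta_{ik}W_j)$ and feed them back into the step-3 vanishing $[[N_l,N_m],[N_i,N_j]]=0$ to obtain $K=0$; swapping the roles of $M$ and $\hat M$ gives $\hat K=0$, contradicting $K\neq\hat K$. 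Your plan does not anticipate this chain, and in particular misses that the curvature of $M$ (not just $\kappa$) is what gets forced to vanish, via the vertical derivatives of the $W_i$ rather than via the horizontal brackets you focus on.
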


\vspace{0.5cm}

We argue by contradiction and develop the argument in several steps, which are stated as lemmas below.
Hence suppose from now on that the distribution $\RDist$ is flat,
i.e., that it is locally equivalent to its nilpotent approximation.
Thus, given any $q_0\in Q$,
there is a neighborhood $O$ of $q_0$
and $W_i\in\Gamma(\pi_Q|_O,\pi_{TM})$
such that $\LRD(W_i)$, $i=1,\dots,n$, is the basis of the Lie algebra of
the nilpotent approximation of $\RDist$.
We will concentrate our attention on the neighbourhood $O$.

Since $W_i$, $i=1,\dots,n$, are linearly independent on $O$,
\[
\LRD(W_i)W_j-\LRD(W_j)W_i=\sum_{a} \alpha^a_{ij} W_a,
\]
for unique $\alpha^k_{ij}\in\Cinf(O)$
and thus
\[
[\LRD(W_i),\LRD(W_j)]=\sum_a \alpha^a_{ij}\LRD(W_a)+\kappa\nu(A(W_i\wedge W_j)).
\]

\begin{lemma}
The following hold:
\[
g(W_i,W_j)=0,\quad \forall i\neq j \\
\n{W_i}_g=\n{W_j}_g,\quad \forall i,j.
\]
\end{lemma}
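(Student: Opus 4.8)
The plan is to exploit the fact that flatness forces the third-order brackets of the $\LRD(W_i)$ to obey exactly the structure relations of the nilpotent model given in Proposition~\ref{nilpo}, and then to read off constraints on the $W_i$ from the explicit bracket formulas for constant-curvature rolling recorded at the start of Section~\ref{sec:flat}. Concretely, from
\[
[\LRD(W_i),\LRD(W_j)]=\sum_a \alpha^a_{ij}\LRD(W_a)+\kappa\nu(A(W_i\wedge W_j)),
\]
I would compute $[\LRD(W_k),[\LRD(W_i),\LRD(W_j)]]$ using the Proposition in Section~\ref{sec:not} (the Lie bracket formula on $Q$), keeping careful track of the $\LNSD$-component, which in the nilpotent model must vanish whenever it would vanish there. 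The key input is the formula
\[
[\LRD(X),[\LRD(Y),\LRD(Z)]]=-\kappa g(Z,X)\LNSD(Y)+\kappa g(Y,X)\LNSD(Z)\qquad \mathrm{mod}(\LRD,\nu),
\]
so the $\LNSD$-part of $[\LRD(W_k),[\LRD(W_i),\LRD(W_j)]]$ is $-\kappa g(W_i,W_k)\LNSD(W_j)+\kappa g(W_j,W_k)\LNSD(W_i)$.

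Next I would match this against the model relation $[N_k,[N_i,N_j]]=-\delta_{ik}Z_j+\delta_{ij}Z_k$ (and its consequence, noted in the Remark after Proposition~\ref{nilpo}, that $[N_i,[N_j,N_k]]=0$ when $i\neq j,k$, while $[N_j,[N_i,N_j]]=Z_i$). Since the $\LNSD(W_a)$ are linearly independent (the $W_a$ are, and $\LNSD$ is injective fibrewise) and are independent of the $\LRD$- and $\nu$-directions, the $\mathrm{mod}(\LRD,\nu)$ expression above is precisely the $\mathfrak{n}_3$-component in the nilpotent grading. Therefore flatness forces, for all $i,j,k$,
\[
-g(W_i,W_k)\LNSD(W_j)+g(W_j,W_k)\LNSD(W_i)=-\delta_{ik}\LNSD(W_j)+\delta_{ij}\LNSD(W_k)
\]
up to the identification of $Z_a$ with (a multiple of) $\LNSD(W_a)$; more carefully, one gets that the bilinear assignment $(i,j,k)\mapsto g(W_i,W_k)$ must reproduce the Kronecker-delta pattern. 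Taking $i=k\neq j$ gives $g(W_i,W_i)\LNSD(W_j)=\LNSD(W_j)$, hence $\n{W_i}_g=1$ for each $i$ after rescaling, and in any case $\n{W_i}_g=\n{W_j}_g$; taking $i\neq j=k$ gives $g(W_i,W_j)\LNSD(W_j)=0$ for $i\neq j$, hence $g(W_i,W_j)=0$.

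I expect the main obstacle to be bookkeeping rather than conceptual: one must justify carefully that the $\LNSD$-, $\LRD$-, and $\nu$-summands genuinely decompose according to the graded pieces $\mathfrak{n}_1,\mathfrak{n}_2,\mathfrak{n}_3$ of the nilpotent approximation, i.e. that an equivalence $F$ realizing flatness sends $\LRD(W_i)$ to $N_i$ in a way compatible with the flag, so that equating $\mathfrak{n}_3$-components is legitimate; and one must handle the $\mathrm{mod}(\LRD,\nu)$ truncation honestly, checking that the $\LNSD$-terms cannot be cancelled by $\LRD$- or $\nu$-terms. Once the grading is pinned down the algebra is immediate. A secondary point is that $\kappa\neq 0$ is needed throughout (to divide out), which is exactly the hypothesis $K\neq\hat K$; this is where the two-dimensional $1\colon 3$ counterexample is excluded, though dimension $n\geq 3$ will only become essential in the subsequent lemmas, not in this one.
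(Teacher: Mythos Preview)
Your computation of the $\LNSD$-component of the triple bracket is correct, and comparing it with the structure of the nilpotent model is exactly the right idea. The gap is in the step where you pass from the model relation to an equation on $Q$ by ``identifying $Z_a$ with (a multiple of) $\LNSD(W_a)$''. Flatness gives you a local diffeomorphism $F$ with $F_*\LRD(W_i)=N_i$, and hence $F_*$ induces, at each point, a linear isomorphism between $T|_qQ/\RDist^{(2)}|_q$ and $\mathfrak{n}_3$; but nothing tells you this isomorphism sends the class of $\LNSD(W_a)$ to $Z_a$ up to a \emph{common} scalar. A priori $Z_a$ corresponds to some combination $\sum_b\phi_{ab}(q)\,[\LNSD(W_b)]$, and your displayed equation with Kronecker deltas on the right is illegitimate until you know that $(\phi_{ab})$ is a scalar matrix --- which is essentially what the lemma asserts. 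Your subsequent specializations (``taking $i=k\neq j$'', etc.) are literally equating coefficients across this unjustified identification.

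The paper avoids this by using only the \emph{relations} among triple brackets in the model, which $F_*$ does transport verbatim to $Q$. From the Remark after Proposition~\ref{nilpo} one has $[N_i,[N_j,N_i]]=[N_k,[N_j,N_k]]$ for all $i,k\neq j$; pushing this equality back to $Q$ and reading off the $\LNSD$-component modulo $(\LRD,\nu)$ gives $(W_i\wedge W_j)W_i=(W_k\wedge W_j)W_k$, i.e.
\[
g(W_i,W_j)W_i-g(W_i,W_i)W_j=g(W_k,W_j)W_k-g(W_k,W_k)W_j,
\]
from which both conclusions follow by linear independence of the $W_a$. Your route can be salvaged by carrying the unknown matrix $(\phi_{ab})$ through the comparison and solving the resulting linear system (using $n\geq 3$ to vary the free index); one discovers a posteriori that $(\phi_{ab})$ is scalar and that $g(W_i,W_j)=\beta\delta_{ij}$. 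But that is strictly more work than comparing two triple brackets the model already declares equal. (Minor point: your quoted model relation $[N_k,[N_i,N_j]]=-\delta_{ik}Z_j+\delta_{ij}Z_k$ has the outer index misplaced relative to the paper's convention, which is why the indices on the two sides of your displayed equation do not match.)
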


\begin{proof}
We need to compute
\[
& [\LRD(W_k),[\LRD(W_i),\LRD(W_j)]] \\
&=-\kappa \LNSD(A(W_i\wedge W_j)W_k)
+\sum_a (\LRD(W_k)\alpha^a_{ij})\LRD(W_a)
+\sum_{a,b} \alpha^a_{ij} \alpha^b_{ka} \LRD(W_b) \\
&+\kappa\sum_a \alpha^a_{ij} \nu(A(W_k\wedge W_a))
+\kappa\nu(A\LRD(W_k)(W_i\wedge W_j))
-\kappa \LRD(\nu(A(W_i\wedge W_j))W_k).
\]
On the other hand, by the structure of the nilpotent approximation,
\[
[\LRD(W_i),[\LRD(W_j),\LRD(W_i)]]=[\LRD(W_k),[\LRD(W_j),\LRD(W_k)]],
\]
for all $i,k\neq j$.
This implies that if $i,k\neq j$,
\[
(W_i\wedge W_j)W_i=(W_k\wedge W_j)W_k,
\]
that is
\[
g(W_i,W_j)W_i-g(W_i,W_i)W_j=g(W_k,W_j)W_k-g(W_k,W_k)W_j.
\]
The claim of the lemma follows from this.
\end{proof}

According to the above lemma, the squared norms $\n{W_i}^2_g$ are all the same, for $i=1,\dots,n$, and we use $\beta\in\Cinf(O)$ to denote that common value.
Thus, we have
\[
g(W_i,W_j)=\beta\delta_{ij},\quad \forall i,j.
\]
Notice that $\beta$ never vanishes on $O$.

\begin{lemma}
For all $i,j,k$, we have $\alpha^k_{ij}=0$.
\end{lemma}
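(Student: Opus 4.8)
The plan is to extract more information from the nilpotent-approximation structure by computing higher-order brackets and exploiting the normalization $g(W_i,W_j)=\beta\delta_{ij}$ already established. Recall that in the nilpotent model one has $[N_i,[N_j,N_k]]=-\delta_{ik}Z_j+\delta_{ij}Z_k$, and in particular $[N_i,[N_j,N_k]]=0$ whenever $i$, $j$, $k$ are pairwise distinct, while $[N_j,[N_i,N_j]]=Z_i$ is independent of $j\neq i$. So the strategy is to write out $[\LRD(W_k),[\LRD(W_i),\LRD(W_j)]]$ using the Proposition for Lie brackets on $Q$ together with the expansion $[\LRD(W_i),\LRD(W_j)]=\sum_a\alpha^a_{ij}\LRD(W_a)+\kappa\nu(A(W_i\wedge W_j))$, and then read off the coefficients in the $\LRD$, $\LNSD$, and $\nu$ directions. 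Since the $W_i$ and the vertical vectors $\nu(A(W_i\wedge W_j))$ together span $\RDist^{(2)}$ and are linearly independent, each graded piece can be matched separately.

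First I would look at the $\LNSD$-component of the triple bracket. From the formula the $\LNSD$ part of $[\LRD(W_k),[\LRD(W_i),\LRD(W_j)]]$ is $-\kappa\,\LNSD(A(W_i\wedge W_j)W_k)$ together with whatever $\LNSD$-terms come from $-\kappa\,\LRD(\nu(A(W_i\wedge W_j))W_k)$ after decomposing $\LRD=\LNSD+(\text{correction})$; but the cleanest route is: the nilpotent structure forces the $\mathfrak{n}_3$-component (the $\LNSD$ direction, which realizes $T\hat M$) of $[\LRD(W_k),[\LRD(W_i),\LRD(W_j)]]$ to equal, up to the universal constant, $-\delta_{ik}W_j+\delta_{ij}W_k$, matching $Z_j,Z_k$. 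Comparing with $(W_i\wedge W_j)W_k=g(W_k,W_j)W_i-g(W_k,W_i)W_j=\beta(\delta_{jk}W_i-\delta_{ik}W_j)$ is already consistent (this is essentially how $\beta$ was pinned down); the new information must come from the $\LRD$-component of the same triple bracket. That $\LRD$-component is $\sum_a(\LRD(W_k)\alpha^a_{ij})W_a+\sum_{a,b}\alpha^a_{ij}\alpha^b_{ka}W_b-\kappa(\nu(A(W_i\wedge W_j))W_k)$-type terms expressed back in the $W_a$ basis. In the nilpotent model the $\mathfrak{n}_1$-component of any triple bracket $[N_k,[N_i,N_j]]$ is \emph{zero}. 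Hence the entire $\LRD$-component of $[\LRD(W_k),[\LRD(W_i),\LRD(W_j)]]$ must vanish on $O$.

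So the key step is: setting the $\LRD$-component to zero gives, for all $i,j,k$,
\[
\LRD(W_k)|_q\,\alpha^a_{ij}+\sum_b\alpha^b_{ij}\alpha^a_{kb}+(\text{terms from }\kappa\,\nu(A(W_i\wedge W_j))W_k\text{ and }\kappa\,\LRD(W_k)(W_i\wedge W_j))=0
\]
for every $a$. The term $\nu(A(W_i\wedge W_j))|_q W_k$ vanishes because $W_k\in\Cinf(\pi_Q,\pi_{TM})$ depends only on the base point $x$ and not on $A$ (the $W_i$ live on $M$-data; more precisely $\nu$ differentiates in the fiber $A$-direction and $W_k$ pulled back from $M$ is constant there) — this needs a short justification but is exactly the kind of observation the earlier sections set up. After discarding that term one is left with a relation purely among the $\alpha$'s and their $\LRD$-derivatives. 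I would then antisymmetrize/symmetrize in the indices $i,j,k$ and use $\alpha^a_{ij}=-\alpha^a_{ji}$ to conclude that all $\alpha^a_{ij}=0$. A clean way: the vanishing of the $\mathfrak{n}_1$-part of the \emph{second}-order bracket already in the model says $[N_i,N_j]$ has no $\mathfrak{n}_1$-component, which is automatic; but the vanishing of the $\mathfrak{n}_1$-part of the third-order bracket, combined with the fact that in the nilpotent algebra $\mathfrak{n}_2$ and $\mathfrak{n}_3$ are genuinely distinct graded pieces, forces the ``connection coefficients'' $\alpha$ to be constant and then, by a symmetry argument using $n\geq 3$ (so that for any pair $i,j$ there is a third index $k$), to vanish identically.

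The main obstacle I anticipate is bookkeeping: correctly expanding $\LRD(\nu(A(W_i\wedge W_j))W_k)$ and $\nu(A\LRD(W_k)(W_i\wedge W_j))$ via the bracket Proposition, keeping track of the $\LNSD$ versus $\LRD$ split of each term, and isolating the purely horizontal ($\mathfrak{n}_1$) part cleanly enough that the resulting algebraic system on the $\alpha^a_{ij}$ visibly forces them to zero. A secondary subtlety is making rigorous the claim that $\nu(\cdot)|_q W_k=0$ because $W_k$ is, by construction, a section depending only on the $M$-factor; one must check that the $W_i$ produced by the flatness hypothesis can indeed be taken (or shown to behave) this way, or else carry the extra term through and see it cancel. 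Once $\alpha^a_{ij}=0$ is in hand, the brackets become $[\LRD(W_i),\LRD(W_j)]=\kappa\nu(A(W_i\wedge W_j))$ exactly, and the subsequent lemmas can proceed to derive the final contradiction with flatness in dimension $\geq 3$.
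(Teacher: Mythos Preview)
Your approach has two genuine gaps.

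First, the inference ``in the nilpotent model the $\mathfrak{n}_1$-component of $[N_k,[N_i,N_j]]$ is zero, hence the $\LRD$-component of $[\LRD(W_k),[\LRD(W_i),\LRD(W_j)]]$ vanishes'' is not justified. The local diffeomorphism $\phi$ realizing flatness sends $\mathfrak{n}_1$ to $\RDist$ and respects the \emph{filtration} $\RDist\subset\RDist^{(2)}\subset\RDist^{(3)}$, but there is no reason it should carry the graded pieces $\mathfrak{n}_2,\mathfrak{n}_3$ to the vertical space $V(\pi_Q)$ and the $\LNSD(0,\cdot)$-space respectively. So ``zero $\mathfrak{n}_1$-component'' in the model does not translate into ``zero $\LRD$-component'' in the $(\LRD,\nu,\LNSD)$-splitting on $Q$. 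You do get honest vanishing of the \emph{whole} triple bracket when $i,j,k$ are pairwise distinct, but then the $\LRD$-component of that equation still contains the term $-\kappa\,\LRD(\nu(A(W_i\wedge W_j))|_q W_k)$, which brings us to the second gap.

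Second, your claim that $\nu(A(W_i\wedge W_j))|_q W_k=0$ because ``$W_k$ depends only on $x$'' is false. The $W_i$ are sections in $\Gamma(\pi_Q|_O,\pi_{TM})$, i.e.\ they depend on the full point $q=(x,\hat{x};A)\in Q$, not just on $x$; this is exactly how the paper introduces them. In fact a later lemma shows $\nu(A(W_i\wedge W_j))W_k=\tfrac{\beta K}{\kappa}(\delta_{jk}W_i-\delta_{ik}W_j)$, which is generically nonzero. With this unknown term present, your $\LRD$-component equation mixes the $\alpha^a_{ij}$, their $\LRD$-derivatives, and the vertical derivatives of the $W_k$, and no clean symmetry argument isolates the $\alpha$'s.

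The paper sidesteps both issues by going one step higher: since the nilpotent algebra has step~3, the bracket $[[\LRD(W_i),\LRD(W_j)],[\LRD(W_k),\LRD(W_l)]]$ is \emph{identically zero} on $O$ for all $i,j,k,l$. Its $\LNSD$-part modulo $(\LRD,\nu)$ is
\[
-\kappa\sum_a\big(\alpha^a_{ij}(W_k\wedge W_l)W_a-\alpha^a_{kl}(W_i\wedge W_j)W_a\big)
=-\kappa\beta\big(\alpha^l_{ij}W_k-\alpha^k_{ij}W_l-\alpha^j_{kl}W_i+\alpha^i_{kl}W_j\big),
\]
a purely algebraic, derivative-free relation among the $\alpha$'s. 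Specializing indices (e.g.\ $l=i$, $l=j$ with $i,j,k$ distinct) and using $\alpha^a_{bc}=-\alpha^a_{cb}$ then forces all $\alpha^k_{ij}=0$.
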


\begin{proof}
Since the nilpotent approximation of $\RDist$ has step 3, we have
\[
0&=[[\LRD(W_i),\LRD(W_j)],[\LRD(W_k),\LRD(W_l)]] \\
&=\left[\sum_a \alpha^a_{ij}\LRD(W_a)+\kappa\nu(A(W_i\wedge W_j)),\sum_b \alpha^b_{kl}\LRD(W_b)+\kappa\nu(A(W_k\wedge W_l))\right] \\
&=-\kappa\LNSD(\sum_a \alpha^a_{ij} A(W_k\wedge W_l)W_a-\sum_b \alpha^b_{kl} A(W_i\wedge W_j)W_b)\qquad\mathrm{mod}(\LRD,\nu),
\]
i.e. for all $i,j,k,l$,
\[
0&=\sum_a \big(\alpha^a_{ij} (W_k\wedge W_l)W_a- \alpha^a_{kl} (W_i\wedge W_j)W_a\big) \\
&=\sum_a \big(\alpha^a_{ij} g(W_l,W_a)W_k-\alpha^a_{ij} g(W_k,W_a)W_l
-\alpha^a_{kl} g(W_j,W_a)W_i+\alpha^a_{kl} g(W_i,W_a)W_j\big) \\
&=\beta (\alpha^l_{ij} W_k-\alpha^k_{ij}W_l-\alpha^j_{kl}W_i+\alpha^i_{kl}W_j).
\]
Since $\beta\neq 0$ on $O$, we have arrived at the equation
\[
\alpha^l_{ij} W_k-\alpha^k_{ij}W_l-\alpha^j_{kl}W_i+\alpha^i_{kl}W_j=0,
\]
which holds for all $i,j,k,l$.

Fix any $i,j,k$ which are distinct one from the other. First taking $l=i$, we get
\[
\alpha^i_{ij} W_k-(\alpha^k_{ij}+\alpha^j_{ki})W_i+\alpha^i_{ki}W_j=0,
\]
and hence
\[
\alpha^i_{ij}=0,\quad \alpha^k_{ij}=-\alpha^j_{ki}.
\]
On the other hand, setting $l=j$, we obtain
\[
\alpha^j_{ij} W_k-\alpha^j_{kj}W_i+(\alpha^i_{kj}-\alpha^k_{ij})W_j=0,
\]
from which the only new relation that we get is
\[
\alpha^i_{kj}=\alpha^k_{ij}.
\]
Recalling also that $\alpha^a_{bc}=-\alpha^a_{cb}$, for all$a,b,c$,
we may now compute that
\[
\alpha^k_{ij}=\alpha^i_{kj}=-\alpha^i_{jk}=-\alpha^j_{ik}=\alpha^j_{ki}=-\alpha^k_{ij}
\]
which means that $\alpha^i_{jk}=0$ whenever $i,j,k$ are 2 by 2 distinct.

Now of $i\neq j$, we showed above that $\alpha^i_{ij}=0$
and hence $\alpha^i_{ji}=-\alpha^i_{ij}=0$.
Since also, $\alpha^j_{ii}=0$,
we may conclude that $\alpha^a_{bc}=0$ for all $a,b,c$.
\end{proof}

The previous lemma implies that, for all $i,j$,
\[
[\LRD(W_i),\LRD(W_j)]=\kappa\nu(A(W_i\wedge W_j)).
\]

\begin{lemma}
For all $k\neq i,j$,
\[
\LRD(W_k)(W_i\wedge W_j)=0.
\]

\end{lemma}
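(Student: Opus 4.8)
The plan is to compute the third-order brackets $[\LRD(W_k),[\LRD(W_i),\LRD(W_j)]]$ using the formula from the proposition for Lie brackets of vector fields on $Q$, specialized to the flat situation we have reached. Since the previous lemma gives $\alpha^a_{bc}=0$ everywhere on $O$, we now have the clean identity $[\LRD(W_i),\LRD(W_j)]=\kappa\nu(A(W_i\wedge W_j))$. Applying the bracket formula once more, $[\LRD(W_k),\kappa\nu(A(W_i\wedge W_j))]$ splits into an $\LNSD$-term coming from $(\nu(A(W_i\wedge W_j))|_q W_k)$ (which is $-\kappa\LNSD(A(W_i\wedge W_j)W_k)$), a $\nu$-term coming from $\kappa\LRD(W_k)(A(W_i\wedge W_j))=\kappa A\,\LRD(W_k)(W_i\wedge W_j) + \kappa(\nu\text{-correction})$, plus the curvature terms, and an $\LRD$-term $-\kappa\LRD(\nu(A(W_i\wedge W_j))|_q W_k)$. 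The key point is to extract the $\nu$-component of this third-order bracket.

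Next I would compare this $\nu$-component against what the nilpotent approximation dictates. By Proposition~\ref{nilpo} (and the Remark following it), for $k\neq i,j$ the bracket $[N_k,[N_i,N_j]]$ vanishes in the nilpotent model; hence in the flat case $[\LRD(W_k),[\LRD(W_i),\LRD(W_j)]]$ must lie in $\mathfrak{n}_3$, which in the decomposition \eqref{decomp} is spanned by the $Z_i$'s — equivalently, it is tangent to $T|_{\hat x}\hat M$ via $\LNSD$-directions and has \emph{no} vertical $\nu$-part. So the $\nu$-component of the third-order bracket must be zero. Reading off that $\nu$-component from the expansion above, and using $A(W_i\wedge W_j)W_k=\beta(\delta_{jk}W_i-\delta_{ik}W_j)=0$ for $k\neq i,j$ (so the $\LNSD$-term and the $\LRD$-term from the contraction of $\nu(A(W_i\wedge W_j))$ with $W_k$ also behave controllably), the surviving $\nu$-contribution is precisely $\kappa\,\nu\big(A\,\LRD(W_k)(W_i\wedge W_j)\big)$ together with the curvature term $\nu\big(A R(W_k, A(W_i\wedge W_j)\cdot)-\dots\big)$-type pieces; in the constant-curvature setting those curvature terms involving $\Rol$ vanish because $\wRol$ is identically zero when $K=\hat K$... but here $K\neq\hat K$, so one must instead track that the genuinely vertical leftover is exactly $\kappa A\,\LRD(W_k)(W_i\wedge W_j)$. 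Setting this to zero and cancelling $\kappa\neq 0$ and the invertible $A$ gives $\LRD(W_k)(W_i\wedge W_j)=0$.

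The main obstacle I anticipate is bookkeeping: correctly separating the $\LNSD$, $\nu$, and $\LRD$ components in the double application of the bracket formula, since $\LRD(W_k)$ acting on the $T^*M\otimes T\hat M$-valued map $q\mapsto A(W_i\wedge W_j)$ produces both a ``horizontal'' derivative $A\,\LRD(W_k)(W_i\wedge W_j)$ and extra terms from differentiating $A$ itself, and one has to confirm those extra terms land in $\LNSD$/curvature slots rather than polluting the $\nu$-slot. The cleanest route is probably to observe that $\nu(A(W_i\wedge W_j))$ is a vertical field, so $[\LRD(W_k),\nu(A(W_i\wedge W_j))] = \LRD(W_k)\big(\nu(A(W_i\wedge W_j))\big)$ can be computed directly via the vertical-derivative calculus set up in Section~\ref{sec:not}, isolating $\nu\big(\LRD(W_k)|_q(A(W_i\wedge W_j))\big)$ as the only term whose vertical part is unconstrained; everything else (the $\LNSD$ term, the curvature $R^{\ol\nabla}$ term, and the $\LRD$ correction) either vanishes by $A(W_i\wedge W_j)W_k=0$ for $k\neq i,j$ or contributes to $\mathfrak n_3 \oplus \mathfrak n_1$. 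Matching with the nilpotent structure — which forbids any $\mathfrak n_2$-component in $[N_k,[N_i,N_j]]$ for $k\neq i,j$ — then forces $\LRD(W_k)(W_i\wedge W_j)=0$, completing the proof of the lemma.
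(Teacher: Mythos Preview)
Your approach is essentially the paper's: compute the triple bracket $[\LRD(W_k),[\LRD(W_i),\LRD(W_j)]]$, use the nilpotent structure to see it vanishes for $k\neq i,j$, and read off the $\nu$-component. The paper's computation yields exactly
\[
[\LRD(W_k),[\LRD(W_i),\LRD(W_j)]]
=-\kappa\,\LNSD(A(W_i\wedge W_j)W_k)
+\kappa\,\nu\big(A\,\LRD(W_k)(W_i\wedge W_j)\big)
-\kappa\,\LRD\big(\nu(A(W_i\wedge W_j))W_k\big),
\]
and then notes that this bracket is \emph{zero} (not merely in $\mathfrak{n}_3$) since $[N_k,[N_i,N_j]]=0$ in the nilpotent model when $k\neq i,j$. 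The vanishing of the $\nu$-part immediately gives the lemma.

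Two places where your write-up is noisier than necessary: first, there are no $\Rol$-type curvature terms here, because in the bracket formula the curvature contribution $AR(T,S)-\hat R(\hat T,\hat S)A$ requires both factors to have a nonzero $\LNSD$-component, and $\nu(A(W_i\wedge W_j))$ is purely vertical ($\ol S=0$); so your paragraph about ``curvature terms involving $\Rol$'' can be dropped. Second, differentiating $A$ along $\LRD(W_k)$ gives zero by construction of the rolling lift, so $\LRD(W_k)|_q\big(A(W_i\wedge W_j)\big)=A\,\LRD(W_k)(W_i\wedge W_j)$ with no extra pollution of the $\nu$-slot. With those simplifications, your argument collapses to the paper's three-line proof.
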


\begin{proof}
For all $i,j,k$,
\[
&[\LRD(W_k),[\LRD(W_i),\LRD(W_j)]] \\
&=-\kappa \LNSD(A(W_i\wedge W_j)W_k)
+\kappa\nu(A\LRD(W_k)(W_i\wedge W_j))
-\kappa \LRD(\nu(A(W_i\wedge W_j))W_k)
\]
On the other hand, by the properties of the nilpotent approximation,
if $k\neq i,j$, 
\[
[\LRD(W_k),[\LRD(W_i),\LRD(W_j)]]=0,
\]
which implies the claim.
\end{proof}

%
%
%

\begin{lemma}
The function $\beta$ is (locally) constant on $O\subset Q$.
\end{lemma}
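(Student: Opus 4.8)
We have already reduced to the situation where, on a neighbourhood $O$, the vector fields $W_1,\dots,W_n$ satisfy $g(W_i,W_j)=\beta\delta_{ij}$, $[\LRD(W_i),\LRD(W_j)]=\kappa\nu(A(W_i\wedge W_j))$, and $\LRD(W_k)(W_i\wedge W_j)=0$ whenever $k\neq i,j$. The goal is to show $\LRD(X)|_q\beta=0$ for every $q\in O$ and every $X\in T|_xM$; since the $\LRD(W_i)$ span $\RDist$ and $\RDist$ is bracket-generating, this forces $\beta$ to be locally constant. So it suffices to prove $\LRD(W_k)\beta=0$ for each $k$.

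**Key steps.** First I would differentiate the orthogonality/norm relations. Applying $\LRD(W_k)$ to $g(W_i,W_j)=\beta\delta_{ij}$ and using that $\LRD(W_k)$ acts as a derivation compatible with $g$ (this is exactly the content of the operator $\LRD(X)|_qF=\ol\nabla_{(X,AX)}F$ introduced in Section~\ref{sec:not}, together with the fact that $\ol\nabla$ is a metric connection), one gets
\[
g\bigl(\LRD(W_k)W_i,W_j\bigr)+g\bigl(W_i,\LRD(W_k)W_j\bigr)=(\LRD(W_k)\beta)\,\delta_{ij}.
\]
Next I would extract information from the third-order bracket identity. Since the previous lemma gives $\LRD(W_k)(W_i\wedge W_j)=0$ for $k\neq i,j$, expanding $\LRD(W_k)(W_i\wedge W_j)$ via the product rule $\LRD(W_k)(W_i\wedge W_j)=(\LRD(W_k)W_i)\wedge W_j+W_i\wedge(\LRD(W_k)W_j)$ and contracting against suitable $W_l$ (using $(X\wedge Y)Z=g(Z,Y)X-g(Z,X)Y$) yields a system of linear relations among the components $g(\LRD(W_k)W_i,W_j)$. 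Writing $\Gamma_{kij}:=g(\LRD(W_k)W_i,W_j)$, the relations $\Gamma_{kij}=-\Gamma_{kji}+(\LRD(W_k)\beta)\delta_{ij}$ from metricity, combined with the vanishing of $\LRD(W_k)(W_i\wedge W_j)$ contracted various ways, should pin down $\Gamma_{kij}$ almost completely and, crucially, show that the "trace-like" quantities are forced to vanish. The point where $n\geq 3$ enters is that one needs at least three mutually distinct indices $i,j,k$ to run the argument in the previous lemma, so here too one exploits a third available index. Carrying the index bookkeeping carefully (as in the proof of $\alpha^k_{ij}=0$), one isolates $\LRD(W_k)\beta$ and concludes it is $0$.

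**The main obstacle.** The delicate part is not the conceptual setup but the combinatorial extraction: one has the full tensor $\Gamma_{kij}$ of "connection coefficients of the rolling frame" and must show that the diagonal-trace combination governing $\LRD(W_k)\beta$ vanishes, using only metricity plus the three vanishing conditions $\LRD(W_k)(W_i\wedge W_j)=0$ ($k\neq i,j$) — which give relations only when $k$ differs from both $i$ and $j$. I expect one must treat separately the cases where the free indices coincide or are distinct, and perhaps also feed in the first-order data $[\LRD(W_i),\LRD(W_j)]=\kappa\nu(A(W_i\wedge W_j))$ (equivalently $\LRD(W_i)W_j-\LRD(W_j)W_i=0$, i.e. $\Gamma_{ijk}=\Gamma_{jik}$) to close the system. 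Once those symmetries $\Gamma_{ijk}=\Gamma_{jik}$ and the skew relation $\Gamma_{kij}+\Gamma_{kji}=(\LRD(W_k)\beta)\delta_{ij}$ are combined, a short permutation-of-indices computation — analogous to the one deriving $\alpha^k_{ij}=0$ — should collapse everything and give $\LRD(W_k)\beta=0$, hence $\beta$ locally constant.
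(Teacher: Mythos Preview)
Your plan is correct and matches the paper's approach. The paper carries out exactly the computation you outline, but in a concrete form: it applies $\LRD(W_k)$ to the scalar-times-vector identity $(W_i\wedge W_j)W_j=\beta W_i$ (with $i,j,k$ distinct), uses $\LRD(W_k)(W_i\wedge W_j)=0$ on one side and the product rule on the other, then takes the $g$-inner product with $W_i$ to obtain $\beta\,\LRD(W_k)\beta=0$; the symmetry $\Gamma_{ijk}=\Gamma_{jik}$ you flag as possibly needed is in fact not used at this step.
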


\begin{proof}
Given $i,j,k$ all distinct from one another, we have on the first hand,
\[
\LRD(W_k)((W_i\wedge W_j)W_j)=\LRD(W_k)(\beta W_i)=(\LRD(W_k)\beta) W_i+\beta\LRD(W_k)W_i,
\]
and on the other hand, since $\LRD(W_k)(W_i\wedge W_j)=0$ by the previous lemma,
we have
\[
\LRD(W_k)((W_i\wedge W_j)W_j)
&=(W_i\wedge W_j)\LRD(W_k)W_j \\
&=g(\LRD(W_k)W_j,W_j)W_i-g(\LRD(W_k)W_i,W_j)W_j.
\]
But
\[
& g(W_j,W_j)=\beta\quad\Longrightarrow\quad g(\LRD(W_k)W_j,W_j)=\frac{1}{2}\LRD(W_k)\beta, \\
\]
and so
\[
\LRD(W_k)((W_i\wedge W_j)W_j)=\frac{1}{2}(\LRD(W_k)\beta)W_i-g(\LRD(W_k)W_j,W_i)W_j.
\]
Thus we have shown that
\[
\frac{1}{2}(\LRD(W_k)\beta) W_i+\beta\LRD(W_k)W_i=-g(\LRD(W_k)W_j,W_i)W_j,
\]
whenever $i,j,k$ are all distinct.
Taking inner product with respect to $W_i$ we get
\[
0=\frac{1}{2}(\LRD(W_k)\beta) \beta+\beta g(\LRD(W_k)W_i,W_i)=\beta \LRD(W_k)\beta,
\]
which shows that $\beta$ is locally constant on $O$,
because it does not vanish on $O$.
\end{proof}

The result and the proof of the last lemma implies that for all $i,j,k$ distinct,
\begin{align}\label{eq:LWkWi}
\beta\LRD(W_k)W_i=-g(\LRD(W_k)W_j,W_i)W_j.
\end{align}
This observation allows us to prove the next lemma.

\begin{lemma}
For all $i,j$,
\[
\LRD(W_i)W_j=0.
\]
\end{lemma}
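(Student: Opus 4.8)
The plan is to leverage Equation \eqref{eq:LWkWi}, which already gives us strong control over $\LRD(W_k)W_i$ when $i,j,k$ are distinct, and to bootstrap from there to the full statement $\LRD(W_i)W_j=0$ for all $i,j$. Since $n\geq 3$, for any fixed pair $(i,k)$ there are at least two distinct indices $j$ available (call them $j_1,j_2$), both different from $i$ and $k$. Applying \eqref{eq:LWkWi} with these two choices of the auxiliary index gives $\beta\LRD(W_k)W_i=-g(\LRD(W_k)W_{j_1},W_i)W_{j_1}=-g(\LRD(W_k)W_{j_2},W_i)W_{j_2}$; since $W_{j_1}$ and $W_{j_2}$ are linearly independent (being $g$-orthogonal with common nonzero norm $\beta$), both coefficients must vanish, so $\LRD(W_k)W_i=0$ whenever $i\neq k$.

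It remains to handle the diagonal case $\LRD(W_i)W_i=0$. First I would compute $g(\LRD(W_i)W_i,W_i)=\tfrac{1}{2}\LRD(W_i)\beta$, and since $\beta$ is locally constant this is zero. For the components along $W_j$ with $j\neq i$, differentiate the orthogonality relation $g(W_i,W_j)=0$: this gives $0=\LRD(W_i)(g(W_i,W_j))=g(\LRD(W_i)W_i,W_j)+g(W_i,\LRD(W_i)W_j)$. But $\LRD(W_i)W_j=0$ by the off-diagonal case just established (with the roles of the indices being $k=i$, $i=j$), so $g(\LRD(W_i)W_i,W_j)=0$ as well. Expanding $\LRD(W_i)W_i$ in the orthonormal-up-to-scaling frame $\{W_a\}$ then forces $\LRD(W_i)W_i=0$.

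The main obstacle — really the only subtlety — is making sure the index bookkeeping is airtight: one must verify that $\LRD$ respects the metric in the sense that $\LRD(X)(g(Y,Z))=g(\LRD(X)Y,Z)+g(Y,\LRD(X)Z)$, which follows because $\LRD(X)|_q$ acts on tensor-valued maps via the connection $\ol\nabla$ induced by the Levi-Civita connections and these are metric, so the product rule holds; and one must be careful that the derivation $\LRD(W_k)$ acting on the tensor $W_i\wedge W_j$ (as used in deriving \eqref{eq:LWkWi}) is consistent with its action on the vector fields $W_i$. Both of these are already implicitly used in the preceding lemmas, so no new machinery is needed. Everything else is linear algebra in the frame $\{W_1,\dots,W_n\}$, exploiting $n\geq 3$ to guarantee enough spare indices.
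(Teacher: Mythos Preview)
Your off-diagonal argument has a counting error that breaks the proof when $n=3$. You claim that for any pair $(i,k)$ with $i\neq k$ there exist \emph{two} distinct indices $j_1,j_2$ both different from $i$ and from $k$. But $|\{1,\dots,n\}\setminus\{i,k\}|=n-2$, so for $n=3$ there is exactly one such index, and the linear-independence trick ``$c_{j_1}W_{j_1}=c_{j_2}W_{j_2}\Rightarrow c_{j_1}=c_{j_2}=0$'' is unavailable. Since Theorem~\ref{th:flat} is stated for $n\geq 3$, this is a genuine gap, not a cosmetic one.

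The paper closes this gap by extracting one more symmetry that you did not use. From the earlier lemma $\alpha^a_{ij}=0$ one has $\LRD(W_k)W_i=\LRD(W_i)W_k$, so writing $\LRD(W_k)W_i=a^j_{ki}W_j$ (for $i,j,k$ distinct, from \eqref{eq:LWkWi}) gives the symmetry $a^j_{ki}=a^j_{ik}$. Combining this with the antisymmetry $a^j_{ki}=-a^i_{kj}$ (which is just $g(\LRD(W_k)W_i,W_j)=-g(W_i,\LRD(W_k)W_j)$ since $g(W_i,W_j)=0$ and $\beta$ is constant) one cycles
\[
a^j_{ki}=-a^i_{kj}=-a^i_{jk}=a^k_{ji}=a^k_{ij}=-a^j_{ik}=-a^j_{ki},
\]
forcing $a^j_{ki}=0$ already with only three distinct indices in play. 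Your diagonal step ($\LRD(W_i)W_i=0$ from constancy of $\beta$ and the off-diagonal vanishing) is fine and matches the paper.
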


\begin{proof}
By \eqref{eq:LWkWi}, for all $i,j,k$ distinct, we have
\[
\LRD(W_k)W_i=a^j_{ki}W_j,
\]
where $a^j_{ki}:=-\beta^{-1}g(\LRD(W_k)W_j,W_i)W_j$.
Because $\LRD(W_k)W_i=\LRD(W_i)W_k$, we deduce that $a^j_{ki}=a^j_{ik}$.
On the other hand,
\[
\beta a^j_{ki}&=a_{ki}g(W_j,W_j)=g(\LRD(W_k)W_i,W_j)\\
&=-g(W_i,\LRD(W_k)W_j)=-a^i_{kj}g(W_i,W_i)=-\beta a^i_{kj},
\]
i.e. $a^j_{ki}=-a^i_{kj}$.
Therefore,
\[
a^j_{ki}=-a^i_{kj}=-a^i_{jk}=a^k_{ji}=a^k_{ij}=-a^j_{ki},
\]
i.e. $a^j_{ki}=0$, for all $i,j,k$ distinct.
This proves that for all $i\neq j$,
\[
\LRD(W_i)W_j=0.
\]

Finally, since $\beta$ is locally constant, $g(\LRD(W_i)W_i,W_i)=0$
and if $j\neq i$,
\[
g(\LRD(W_i)W_i,W_j)=-g(W_i,\LRD(W_i)W_j)=0
\]
and hence
\begin{equation*}
\LRD(W_i)W_i=0,\quad \forall i.\qedhere
\end{equation*}
\end{proof}

The previous fact has a natural useful consequence.

\begin{lemma}
For all $i,j,k$, one has
\[
\nu(A(W_i\wedge W_j))W_k=\frac{\beta K}{\kappa}(\delta_{jk}W_i-\delta_{ik}W_j).
\]
\end{lemma}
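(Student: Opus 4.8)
The plan is to compute the fourth-order Lie bracket $[\LRD(W_l),[\LRD(W_k),[\LRD(W_i),\LRD(W_j)]]]$ in two ways and compare. On one hand, by the previous lemmas we already know that $[\LRD(W_i),\LRD(W_j)]=\kappa\nu(A(W_i\wedge W_j))$ and that $\LRD(W_k)W_a=0$ for all $k,a$; feeding this into the bracket formula from the Proposition in Section~\ref{sec:not}, the computation of $[\LRD(W_k),[\LRD(W_i),\LRD(W_j)]]=[\LRD(W_k),\kappa\nu(A(W_i\wedge W_j))]$ produces a term $-\kappa\LNSD(A(W_i\wedge W_j)W_k)$, a term $\kappa\nu(A\LRD(W_k)(W_i\wedge W_j))$ (which vanishes when $k\neq i,j$ by the lemma above), a curvature term coming from $\nu(AR(\cdot,\cdot)-\hat R(\cdot,\cdot)A)$, and a term $-\kappa\LRD(\nu(A(W_i\wedge W_j))W_k)$. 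The key point is that the vertical derivative $\nu(A(W_i\wedge W_j))$ acting on $W_k$ is a priori nonzero, and it is exactly this quantity that we want to determine.

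First I would write out $[\LRD(W_k),[\LRD(W_i),\LRD(W_j)]]$ carefully for distinct $i,j,k$, using $\LRD(W_k)W_a=0$, $\LRD(W_k)(W_i\wedge W_j)=0$, and the fact that $M$ has constant curvature $K$ so that $R(W_i,W_j)=K\,W_i\wedge W_j$ (in the $\so(TM)$ identification). One finds that modulo $\LRD$ and $\nu$ terms, the $\LNSD$-part of this bracket is $-\kappa\LNSD(A(W_i\wedge W_j)W_k)$ plus the $\LNSD$-part coming from $-\kappa\LRD(\nu(A(W_i\wedge W_j))W_k)$, namely $\kappa\LNSD(\nu(A(W_i\wedge W_j))W_k,\;0)$. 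On the other hand, by the structure of the nilpotent approximation (Proposition~\ref{nilpo} and the Remark following it), $[\LRD(W_k),[\LRD(W_i),\LRD(W_j)]]=0$ whenever $k\neq i,j$ — more precisely the nilpotent approximation kills this bracket entirely. But here we are computing the actual brackets, and the point is that the third-order bracket $[\LRD(W_k),[\LRD(W_i),\LRD(W_j)]]$ must, by flatness, be expressible the same way as in the nilpotent model; in particular its $\LNSD$-component must vanish identically. Hence, comparing the $\LNSD$-terms above, we get $(W_i\wedge W_j)W_k=\nu(A(W_i\wedge W_j))W_k$ — wait, more carefully: equating the two $\LNSD$ contributions gives $-\kappa\,A(W_i\wedge W_j)W_k+\kappa A\,\nu(A(W_i\wedge W_j))W_k=0$ only after accounting for the curvature term. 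Let me restate: the genuine computation, retaining the curvature term $\nu(AR(W_i,W_j)-\hat R(AW_i,AW_j)A)$, forces (after collecting $\LNSD$-parts and using flatness that the full bracket equals its nilpotent-model value) the identity $\nu(A(W_i\wedge W_j))W_k=c\,(W_i\wedge W_j)W_k$ for an appropriate scalar, and matching the coefficient using the known sectional curvature $K$ of $M$ yields the constant $\beta K/\kappa$.

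\textbf{Main obstacle.} The delicate step is bookkeeping: disentangling the $\LNSD$-, $\nu$-, and $\LRD$-components in the third-order bracket formula from the Proposition of Section~\ref{sec:not}, in particular correctly handling the term $-\kappa\LRD(\nu(A(W_i\wedge W_j))W_k)$ (whose $\LNSD$-part is what couples to the unknown) versus the curvature term $\nu(AR(W_i,W_j)-\hat R(AW_i,AW_j)A)$, and then using that flatness forces the $\LNSD$-component of $[\LRD(W_k),[\LRD(W_i),\LRD(W_j)]]$ to match the nilpotent-model value $-\kappa g(W_j,W_k)\LNSD(W_i)+\kappa g(W_i,W_k)\LNSD(W_j)$ (which for distinct indices is zero, but for $k=j$ or $k=i$ is nonzero). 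Writing $\nu(A(W_i\wedge W_j))W_k=\sum_a b^a_{ij,k}W_a$ and extracting these coefficients from the $\LNSD$-matching, using $g(W_a,W_b)=\beta\delta_{ab}$ and the constancy of $\beta$, then gives the stated formula after a short symmetry argument in the indices, entirely parallel to the index-juggling arguments in the preceding lemmas. I would treat the cases $k\neq i,j$ and $k\in\{i,j\}$ separately, the former giving $\nu(A(W_i\wedge W_j))W_k=0$ and the latter fixing the coefficient $\beta K/\kappa$.
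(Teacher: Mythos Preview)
Your plan has a genuine gap. When you compute $[\LRD(W_k),\kappa\nu(A(W_i\wedge W_j))]$ via the bracket formula of Section~\ref{sec:not}, the curvature term $\nu(AR(T,S)-\hat{R}(\hat{T},\hat{S})A)$ has $(S,\hat{S})=(0,0)$ because the second vector field is purely vertical; hence this term \emph{vanishes}. Your claimed ``curvature term coming from $\nu(AR(\cdot,\cdot)-\hat R(\cdot,\cdot)A)$'' simply is not there. Consequently the third-order bracket, for $k\neq i,j$, is exactly
\[
[\LRD(W_k),[\LRD(W_i),\LRD(W_j)]]
=-\kappa\LNSD(0,A(W_i\wedge W_j)W_k)-\kappa\LRD(\nu(A(W_i\wedge W_j))W_k),
\]
and since $(W_i\wedge W_j)W_k=0$ for distinct indices, flatness only gives $\nu(A(W_i\wedge W_j))W_k=0$. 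For $k=j$ you likewise obtain that $\nu(A(W_i\wedge W_j))W_j$ is independent of $j\neq i$, but \emph{nowhere does $K$ enter}: all curvature contributions in the relevant brackets involve $\kappa$ or $\hat K$, not $K$ alone. Your sentence ``matching the coefficient using the known sectional curvature $K$ of $M$'' is thus unsupported; going to fourth-order brackets might eventually disentangle $K$ from $\hat K$, but your proposal does not explain how, and it would be considerably more laborious than necessary.

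The paper's argument is completely different and essentially one line. It does \emph{not} use the flatness hypothesis at all at this stage; instead it applies the Ricci-type identity for the operators $\LRD(W_i)$ acting on $TM$-valued maps. By the immediately preceding lemma, $\LRD(W_a)W_b=0$ for all $a,b$, so
\[
0=\LRD(W_i)\LRD(W_j)W_k-\LRD(W_j)\LRD(W_i)W_k
=[\LRD(W_i),\LRD(W_j)]W_k+R^{\ol{\nabla}}((W_i,AW_i),(W_j,AW_j))(W_k,0).
\]
Here $[\LRD(W_i),\LRD(W_j)]=\kappa\nu(A(W_i\wedge W_j))$ acts on $W_k$ as the vertical derivative, while the $R^{\ol{\nabla}}$ term, applied to the purely $M$-valued section $(W_k,0)$, is $-R(W_i,W_j)W_k=-K(W_i\wedge W_j)W_k$. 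Solving gives the formula at once. The point you missed is that the curvature $K$ enters not through Lie brackets of vector fields on $Q$, but through the curvature $R^{\ol{\nabla}}$ acting on the tensor-valued map $W_k$ itself.
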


\begin{proof}
By the previous lemma, for all $i,j,k$,
\[
0&=\LRD(W_i)\LRD(W_j)W_k-\LRD(W_j)\LRD(W_j)W_k \\
&=\LRD(\LRD(W_i)W_j-\LRD(W_j)W_i)W_k+\kappa \nu(A(W_i\wedge W_j))W_k\\
&+R^{\ol{\nabla}}((W_i,AW_i),(W_j,AW_j))(W_k,0) \\
&=\kappa\nu(A(W_i\wedge W_j))W_k-K(W_i\wedge W_j)W_k.\qedhere
\]

\end{proof}

We are now in position to finish the proof of Theorem \ref{th:flat}.

%

\begin{proof}[Proof of Theorem \ref{th:flat}]
By the last lemma, we have for all $i,j,k,l,m$,
\begin{multline*}
0=[\nu((\cdot)(W_l\wedge W_m),\nu((\cdot)(W_i\wedge W_j))]W_k \\
=\frac{\beta K}{\kappa}\big(\nu(A(W_l\wedge W_m)(\delta_{jk}W_i-\delta_{ik}W_j)
-\nu(A(W_i\wedge W_j)(\delta_{mk}W_l-\delta_{lk}W_{m})\big) \\
=\left(\frac{\beta K}{\kappa}\right)^2\big(
\delta_{jk}\delta_{mi}W_l-\delta_{jk}\delta_{li}W_m
-\delta_{ik}\delta_{mj}W_l+\delta_{ik}\delta_{lj}W_m \\
-\delta_{mk}\delta_{jl}W_i+\delta_{mk}\delta_{il}W_j
+\delta_{lk}\delta_{jm}W_i-\delta_{lk}\delta_{im}W_j\big)
\end{multline*}

Suppose now that $i,j,k$ are distinct and take $l=i$, $m=k$.
Then the above reduces to
\[
0=&\left(\frac{\beta K}{\kappa}\right)^2W_j,
\]
which means that either $W_j=0$ for all $j=1,\dots,n$
or $K=0$.
The former is absurd, so we must have $K=0$.
If one repeats the above argument with the roles of $(M,g)$ and $(\hat{M},\hat{g})$
reversed, we will also obtain $\hat{K}=0$,
which contradicts the assumption that $K\neq\hat{K}$.
\end{proof}

\end{document}